\tikzset{node distance=2cm, auto}
\newcommand{\conj}[1]{\quad\textnormal{ #1 }\quad}
\newcommand{\inp}[1]{\ensuremath{\langle #1 \rangle}}
\newcommand{\normaltext}[1]{\textnormal{#1}}
\def\imod#1{\allowbreak\mkern2.5mu({\operator@font mod}\,#1)}
\renewcommand{\a}{\alpha}
\newcommand{\opp}{\oplus}
\newcommand{\ott}{\otimes}
\renewcommand{\l}{\lambda}
\newcommand{\ga}{\gamma}
\renewcommand{\d}{\delta}
\newcommand{\aA}{\mathcal{A}}
\newcommand{\aL}{\mathcal{L}}
\newcommand{\aM}{\mathcal{M}}
\newcommand{\aT}{\mathcal{T}}
\newcommand{\bA}{\mathbf{A}}
\newcommand{\fA}{\mathfrak{A}}
\newcommand{\fD}{\mathfrak{D}}
\newcommand{\CC}{\mathbb{C}}
\newcommand{\FF}{\mathbb{F}}
\newcommand{\ZZ}{\mathbb{Z}}
\newcommand{\kk}{\mathbb{k}}
\newcommand{\eA}{\EuScript{A}}
\newcommand{\eB}{\EuScript{B}}
\newcommand{\eC}{\EuScript{C}}
\newcommand{\eD}{\EuScript{D}}
\theoremstyle{plain}
\newtheorem{thm}{Theorem}[section]
\newtheorem{theorem}[thm]{Theorem}
\newtheorem{conjecture}[thm]{Conjecture}
\newtheorem{proposition}[thm]{Proposition}
\newtheorem{prop}[thm]{Proposition}
\newtheorem{corollary}[thm]{Corollary}
\newtheorem{cor}[thm]{Corollary}
\newtheorem{lemma}[thm]{Lemma}
\theoremstyle{remark}
\theoremstyle{definition}
\newtheorem{example}[thm]{Example}
\newtheorem{defn}[thm]{Definition}
\newtheorem{definition}[thm]{Definition}
\newtheorem{notation}[thm]{Notation}
\newtheorem{remark}[thm]{Remark}
\numberwithin{equation}{section}
\begin{document}

\title{Holonomicity from a Heegaard-Floer Perspective}
\date{\today}
\author[Benjamin Cooper]{Benjamin Cooper}
\author[Robert Deyeso III]{Robert Deyeso III}
\address{University of Iowa, Department of Mathematics, 14 MacLean Hall, Iowa City, IA 52242-1419 USA}
\email{ben-cooper\char 64 uiowa.edu}
\email{robert-deyeso\char 64 uiowa.edu}

\def\JS#1{\textcolor[rgb]{0,.75,.8}{ [JS: #1]}}
\def\BC#1{\textcolor[rgb]{0,.5,1}{ [BC: #1]}}
\def\OLD#1{\textcolor[rgb]{1,.69,.4}{ [OLD STUFF: #1]}}
\def\OPT#1{\textcolor[rgb]{1,.61,0}{ [OPTIONAL: #1]}}
\def\RD#1{\textcolor[rgb]{1,0.5,0}{RD: #1}}

\newcommand{\defeq}{\vcentcolon=}

\newcommand{\tm}{\widetilde{m}}
\newcommand{\dga}[0]{\operatorname{-dga}}
\newcommand{\dgca}[0]{\operatorname{-dgcoa}}
\newcommand{\Om}{\Omega}
\newcommand{\pa}{\partial}

\newcommand{\Ainf}{A_\infty}
\newcommand{\varB}[1]{{\operatorname{\mathit{#1}}}}
\renewcommand{\slash}[1]{H_/(#1)}
\newcommand{\pAinf}[0]{\varB{p-\Ainf}\!}
\newcommand{\semp}{{\{\emptyset\}}}
\newcommand{\z}{z}
\newcommand{\T}{T}
\renewcommand{\d}{\delta}
\newcommand{\h}{h}
\renewcommand{\t}{t}
\newcommand{\G}{\Gamma} 
\newcommand{\A}{\Lambda} 
\renewcommand{\bA}{\bar{\Lambda}} 

\renewcommand{\kk}{k}

\newcommand{\lab}[1]{\normaltext{#1}}
\newcommand{\nt}[1]{\normaltext{#1}}
\newcommand{\Ya}{\mathcal{Y}\hspace{-.1475em}a_{2,2}}
\newcommand{\vnp}[1]{\lvert #1 \rvert}
\newcommand{\vvnp}[1]{\lvert\lvert #1 \rvert\rvert}
\newcommand{\I}{[0,1]}
\newcommand{\xto}[1]{\xrightarrow{#1}}
\newcommand{\xfrom}[1]{\xleftarrow{#1}}
\newcommand{\from}{\leftarrow}

\newcommand{\dgcat}{\normaltext{dgcat}_k}
\newcommand{\Forget}{\normaltext{Forget}}
\newcommand{\Mat}{Mat}
\newcommand{\op}{\normaltext{op}}
\newcommand{\ab}{\normaltext{ab}}

\newcommand{\Vect}{Vect}
\newcommand{\Kom}{Kom}
\newcommand{\Set}{Set}
\newcommand{\Ch}{Ch}
\newcommand{\Tw}{Tw}
\newcommand{\Ob}{Ob}
\newcommand{\Hom}{Hom}
\renewcommand{\d}{\delta}

\newcommand{\pre}[0]{\operatorname{pre-}}
\newcommand{\coker}[0]{\operatorname{coker}}
\newcommand{\im}[0]{\operatorname{im}}

\newcommand{\KB}{\mathcal{K}_q}
\newcommand{\QT}{A_q}
\newcommand{\Zt}{{\ZZ/2}}
\newcommand{\Basep}{\mathcal{B}'}
\newcommand{\Base}{\mathcal{B}}

\newcommand{\cfd}{\widehat{\textit{CFD}}}
\newcommand{\cfk}{\widehat{\textit{CFK}}}

\newcommand{\scfk}{S^r\widehat{\textit{CFK}}}
\newcommand{\sscfk}{S^{r+1}\widehat{\textit{CFK}}}
\newcommand{\gscfk}{S^r\widehat{\textit{CFK}}}
\newcommand{\shfk}{S^r\widehat{\textit{HFK}}}
\newcommand{\hfk}{\widehat{\textit{HFK}}}
\newcommand{\gr}{gr}
\newcommand{\fcfk}{S^r\widehat{\textit{CFK}}}
\newcommand{\ffcfk}{S^{r+1}\widehat{\textit{CFK}}}
\newcommand{\hf}{\widehat{\textit{HF}}}
\newcommand{\gvsp}{\Vect_\ZZ}
\newcommand{\chs}{\Ch^*(\gvsp)}
\newcommand{\lnp}[1]{\ensuremath{[\![#1]\!]}}
\newcommand{\pnp}[1]{\ensuremath{P(#1)}}

\newcommand{\alex}{\Delta}
\newcommand{\blex}{\alex}
\newcommand{\unalex}{\widebar{\alex}}
\newcommand{\sralex}{\alex^r}
\newcommand{\srmalex}{\alex^{r-1}}
\newcommand{\unsralex}{\widebar{\alex}^r}
\newcommand{\unsrmalex}{\widebar{\alex}^{r-1}}

\newcommand{\unsonealex}{\widebar{\alex}^1}
\newcommand{\sonealex}{\alex^1}

\newcommand{\hT}{\ZZ[[q]]}
\newcommand{\spinc}{\text{spin}^{c}}
\newcommand{\tws}{\aT}
\newcommand{\Cone}{C}
\newcommand{\ul}[1]{\overline{#1}}

\begin{abstract}
We construct $S^r$-colored knot Floer homologies and prove that they satisfy
categorified recurrence relations. The associated Euler characteristic
implies $q$-holonomicity of the corresponding sequence of colored Alexander
polynomials, inspired by the AJ conjecture for colored Jones polynomials.
  \end{abstract}

\maketitle
\setcounter{tocdepth}{1}
\setcounter{secnumdepth}{2}
\tableofcontents

\section{Introduction}\label{introsec}
\newcommand{\tS}{S}
\newcommand{\tR}{\ZZ[q,q^{-1}]}
\newcommand{\hR}{\tR}
\newcommand{\ZZg}{\ZZ_{\geq 0}}

Given a knot $K$ in $S^3$ and a simple Lie algebra $\mathfrak{g}$, the
Reshetikhin-Turaev construction produces a $q$-series valued invariant
$\inp{K}_V$ of $K$ for each finite dimensional representation $V$ of
the corresponding quantum group $U_q(\mathfrak{g})$.  A priori, the collection 
$\{\inp{K}_V \}_{V\in Rep(\mathfrak{g})}$ appears to contain an infinite amount of information, but the existence of
recurrence relations among, or the $q$-holonomicity of, these knot invariants
tells us that there is a great deal of redundancy. In exchange for an infinite
sequence
$$f : \ZZg \to \ZZ[q^{-1}][[q]]$$
we can make do with a finite collection of initial conditions and
a recurrence relation of the form
\begin{gather}\label{qholeqn}
\sum_{i=n}^{n+m} a_i(q^n,q) f(i) =0 \conj{ for } a_i(u,v)\in R[u,v],
\end{gather}
where $R := \ZZ[q,q^{-1}]$. Garoufalidis and Le proved that for each knot $K$ such a recurrence relation
always exists among $\mathfrak{g}$ invariants (for $\mathfrak{g} \neq G_2$)
\cite[Thm. 6]{GL}.  Unfortunately, this approach does not lead to an
interpretation of the recurrence relation.  In the simplest non-trivial
case, the irreducible representations $V_n \cong S^n(V_1)$ of
$\mathfrak{sl}_2$ ($\dim V_n = n+1$) determine a sequence
$$f(n) = \inp{K}_{V_n}$$
called the colored Jones polynomials of $K$. The AJ conjecture, posed by Garoufalidis \cite{G1,G2}, states that the recurrence relation satisfied by this sequence is a non-commutative deformation of the $A$-polynomial. The commutative $A$-polynomial is characterized by vanishing on the space of characters which extend from the boundary torus $S^1\times S^1 \cong \partial (S^3\backslash \nu K)$ into the $3$-manifold $S^3\backslash \nu K$, where $\nu K$ is a regular neighborhood of the knot $K$, see \cite{CCGLS}.
In foundational work, Frohman, Gelca and Lofaro \cite{FGL,FGSKM} studied similar statements. 
Gukov studied this problem using insights from physics \cite{Gukov}. 
The conjecture is known to be true for some families of knots, see \cite{Le}.

A different story which also stems from the Reshetikhin-Turaev invariants
involves Khovanov homology and its generalizations, knot homology theories, which categorify the Reshetikhin-Turaev invariants, see \cite{KR1,Webster}. For a knot $K$ in $S^3$, and a representation $V$ of a simple Lie algebra $\mathfrak{g}$,  there are chain complexes of graded vector spaces $\lnp{K}_V$, the homology of which are invariants of $K$. The generating function associated to the homology is the Poincar\'{e} polynomial
\begin{equation}\label{decateq}
  \pnp{K}_V(t,q) = \sum_{i,j\in \ZZ} t^i q^j \dim H^{i,j}(\lnp{K}_V), \conj{ satisfying } \pnp{K}_V|_{t=-1} = \inp{K}_V.
  \end{equation}
Its value at $t=-1$ recovers the 
Reshitikhin-Turaev invariants as the graded Euler characteristic of knot homology. 
The situation here is sometimes likened to the relationship between ordinary homology and Euler characteristic, while both are useful tools, the historical precedence of the latter has been overshadowed by the great worlds which have been revealed to us by the former.

If one considers super Lie algebras $\mathfrak{g} = \mathfrak{gl}(1|1)$ then the
Alexander polynomial $\alex_K(q)$ is an instance of the
Reshitikhin-Turaev construction outlined above \cite{Viro,Sartori}.
Heegaard-Floer homology was used to define knot Floer homology $\hfk(K)$ by
Ozsv\'{a}th, Szab\'{o} and Rasmussen \cite{OZ,R}. In analogy with
Eqn. \eqref{decateq}, the Euler characteristic of this knot homology
theory is the Alexander polynomial.  

Our interest in this paper is to examine the question of $q$-holonomicity for these knot homology theories.  
The HOMFLY homology unites the
$\mathfrak{sl}_n$ knot homology theories \cite{RKR,cautisremarks} and,
conjecturally, this includes the knot Floer homology as well
\cite{DGR,BPLW}.
Fuji, Gukov and Su\l kowski \cite{FGS} found examples of recurrence relations
among the Poincar\'{e} polynomials $\pnp{K}_{S^n}(q,t)$ of the HOMFLY homology theories. The colored HOMFLY polynomials $\pnp{K}_{S^n}(q,-1)$ are known to be $q$-holonomic \cite{GLL}.
When placed within the framework of \cite{DGR}, this work suggests the existence of some categorified recurrence relations or {\em homological $q$-holonomicity} among $S^n$-colored HOMFLY knot homologies; a lifting of those relations.
It is therefore natural to formulate the conjecture below.

\begin{conjecture}\label{conjconj}
  There exists a notion of homological $q$-holonomicity for sequences of chain complexes of (filtered) graded vector spaces.
For each knot $K$ in $S^3$, the sequence of $S^r$-colored HOMFLY homologies 
  $\{\lnp{K}_{S^r}\}_{r\geq 1}$
is homologically $q$-holonomic.
\end{conjecture}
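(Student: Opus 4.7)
The plan is to use the Heegaard-Floer construction developed in the main body of this paper as a template, and to transport its structural features to the HOMFLY setting, where the relevant categorifications of projectors and cabling operations already exist. First I would pin down the notion of homological $q$-holonomicity. Equation \eqref{qholeqn} says that $f$ lies in the kernel of a nonzero element of the quantum Weyl algebra $\aA_q \defeq R\inp{L,M}/(LM-qML)$, where $L$ acts by the shift $Lf(n)=f(n+1)$ and $M$ acts as multiplication by $q^n$. A natural categorification $\widetilde{\aA}_q$ is a small DG or $A_\infty$-category with objects indexed by $\ZZ^2$ and generators $\bL,\bM$ satisfying $\bL\bM \simeq q\bM\bL$ up to coherent homotopy. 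A sequence $\{C_r\}_{r\geq 1}$ of filtered bigraded chain complexes is then \emph{homologically $q$-holonomic} if $\bigoplus_r C_r$ carries a $\widetilde{\aA}_q$-module structure and admits a finite resolution by shifts of itself: concretely, a finite iterated mapping cone built from the $\bL^{a_i}\bM^{b_i} C$ whose total complex is null-homotopic by an explicit chain contraction. After applying $H^*$ one should recover the decategorified annihilator relation of the form \eqref{qholeqn}.

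Second, I would build the recursion at the categorified level using the fact that $\lnp{K}_{S^r}$ factors through the insertion of a categorified symmetric (Jones-Wenzl-type) projector $\bP_r$, available from the work of Cautis, Hogancamp, and Rozansky on Soergel bimodule projectors. The essential structural input is that the $\bP_r$ themselves satisfy categorified Chebyshev-type three-term recurrences: each $\bP_r$ sits in an exact triangle whose remaining terms are bigraded shifts of $\bP_{r-1}$ and $\bP_{r-2}$. Closing these triangles with the knot, i.e.\ applying the categorified trace $\lnp{K \ast -}$, transports them to exact triangles at the level of the colored homologies $\{\lnp{K}_{S^r}\}$. Iterating these triangles, while tracking the internal quantum grading shifts contributed by the projectors, should assemble the finite null-homotopic multicone demanded by the definition above, giving a specific recursion operator in $\widetilde{\aA}_q$ annihilating the sequence.

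The main obstacle is homotopy coherence. At the decategorified level the recurrence is a literal identity in $R$, but at the categorified level one must provide not only the individual recursion chain maps but \emph{all} higher compatibility data needed to assemble a genuine $\widetilde{\aA}_q$-module rather than merely a sequence of disjoint quasi-isomorphisms. This is precisely the point where the Heegaard-Floer construction of the present paper should be most instructive: the $\bL$- and $\bM$-actions arise there from explicit bimodule operations whose coherences are controllable, and one expects the analogous Soergel-bimodule story to be workable but more intricate. A secondary obstacle is that $\bP_r$ exists only in a completed (stabilized) chain homotopy category, so one must verify that the colored complexes remain bounded-below and locally finite in each bigrading, in order that the notion of homological $q$-holonomicity actually applies to them. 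A natural first benchmark would be to verify the conjecture for the unknot and for $(2,2k+1)$ torus knots, where the decategorified recurrences are known explicitly \cite{FGS,GLL} and where the categorified projectors are computable.
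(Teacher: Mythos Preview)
The statement you are attempting to prove is a \emph{conjecture} in the paper, not a theorem; the paper contains no proof of it. Immediately after stating Conjecture~\ref{conjconj}, the authors write that they ``pursue a simplified version of this conjecture'' and ``restrict [their] attention'' to the knot Floer setting. The main theorem proven in the paper is the analogue for $S^r$-colored knot Floer homologies $\{\scfk(K)\}$, not for colored HOMFLY homologies. So there is no ``paper's own proof'' to compare against here.

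That said, a few remarks on your sketch relative to what the paper actually does in the Floer case. First, your proposed notion of homological $q$-holonomicity --- a categorified Weyl algebra $\widetilde{\aA}_q$ acting with higher coherences and a null-homotopic finite multicone --- is more elaborate than the paper's Definition~\ref{homqholdef}, which only asks that the sequence object be assembled from the thick pre-triangulated subcategory generated by its Weyl orbit via a finite tower of distinguished triangles. The paper's definition sidesteps the homotopy-coherence problem you flag as the main obstacle: no $A_\infty$-module structure is required, only ordinary maps in $H^0(\SSeq)$ and iterated cones. Second, the paper's proof strategy in the Floer case does not use categorified projector recursions at all; instead it exploits an explicit structural decomposition (Proposition~\ref{propprop}) into a finite ``head'' plus an ``unknot tail,'' obtained from the immersed-curve description of cabling, and then runs a Koszul-type induction (Lemma~\ref{reducedlemma}) on the head. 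Your approach via Soergel-bimodule projector triangles is a genuinely different route, and it is plausible as a strategy for the HOMFLY conjecture, but it remains a program rather than a proof: the existence and form of the requisite projector triangles, the behaviour of the categorified trace on them, and the boundedness issues you mention are all substantial open points.
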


Here we pursue a simplified version of this conjecture. Since a spectral sequence from HOMFLY homology to knot Floer homology ought to imply that homological $q$-holonomicity holds for knot Floer homology as well. We restrict our attention to this setting, where a
robust theory has been developed and can be leveraged in the context of our problem.

\begin{theorem}
  There exists a notion of homological $q$-holonomicity for sequences of (filtered) chain complexes of graded vector spaces. For each knot $K$ in $S^3$, the $S^r$-colored knot Floer homologies $\{\fcfk(K)\}_{n\geq 1}$ is homologically $q$-holonomic.
\end{theorem}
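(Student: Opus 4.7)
The plan is to prove the theorem in four stages: defining homological $q$-holonomicity for sequences of filtered chain complexes, constructing the colored complexes $\scfk(K)$, establishing a categorified recurrence among them, and verifying that the classical holonomicity of colored Alexander polynomials is recovered upon decategorification.

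First I would define homological $q$-holonomicity by categorifying \eqref{qholeqn}. Each coefficient $a_i(q^n,q)\in R[q^n,q]$ is lifted to a filtered graded vector space $A_i^n$ whose $(t,q)$-graded Euler characteristic agrees with $a_i(q^n,q)$ at $t=-1$, and the vanishing $\sum_i a_i(q^n,q)f(i)=0$ is lifted to the requirement that the iterated mapping cone
\[
\operatorname{Cone}\big( A_n^n \otimes f(n) \to A_{n+1}^n\otimes f(n+1)\to \cdots \to A_{n+m}^n\otimes f(n+m)\big)
\]
be acyclic, with connecting maps built from the $A_i^n$. Equivalently, the chain complexes $f(n),\ldots,f(n+m)$ assemble into an exact triangle of filtered chain complexes, and applying the decategorification functor $t\mapsto -1$ exactly reproduces \eqref{qholeqn}.

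To construct $\scfk(K)$, I would take the $r$-fold cable of $K$ and apply a categorified symmetric projector in the spirit of the Jones-Wenzl projectors, adapted to the $\mathfrak{gl}(1|1)$ setting where the Alexander polynomial arises as a Reshetikhin-Turaev invariant \cite{Viro,Sartori}. Bordered Heegaard-Floer homology supplies the right algebraic framework: cabling is encoded by type D/A structures over the torus algebra, and these structures admit the exact triangles that are needed downstream. The output should be a filtered chain complex whose graded Euler characteristic is the $S^r$-colored Alexander polynomial $\sralex_K(q)$.

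For the categorified recurrence I would exploit the tensor product decomposition $V_r\otimes V_1\cong V_{r+1}\oplus V_{r-1}$ and its $\mathfrak{gl}(1|1)$ analog. At the categorified level this yields an exact triangle relating $\sscfk(K)$, $\scfk(K)\otimes \cfk(K)$, and the $S^{r-1}$-colored complex, with explicit Alexander grading shifts. Iterating and repackaging the triangles produces an acyclic iterated cone of precisely the shape demanded by the definition above; taking Euler characteristics recovers the classical linear recurrence and proves $q$-holonomicity of $\{\sralex_K(q)\}_{r\geq 1}$. The chief obstacle is the construction of $\scfk(K)$ itself: the symmetric projector must be realized in a setting rigid enough that gradings, filtrations, and quasi-isomorphisms can all be tracked cleanly through the iterated cone, so that acyclicity holds genuinely at the chain-complex level and not merely as a shadow of the decategorified identity.
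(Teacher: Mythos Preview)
Your proposal is a plan rather than a proof, and its two load-bearing steps are not established. First, the construction of $\scfk(K)$ via a categorified symmetric projector in the $\mathfrak{gl}(1\vert 1)$/knot Floer setting is not something you build; you acknowledge this as ``the chief obstacle'' and leave it unresolved. No such projector is available off the shelf in bordered Floer theory, and the type D/A formalism alone does not furnish one. Second, the recurrence step rests on an unspecified ``$\mathfrak{gl}(1\vert 1)$ analog'' of the $\mathfrak{sl}_2$ Clebsch--Gordan rule $V_r\otimes V_1\cong V_{r+1}\oplus V_{r-1}$, together with a categorified exact triangle lifting it. The representation theory of $\mathfrak{gl}(1\vert 1)$ does not mirror $\mathfrak{sl}_2$ in this way, and even granting some analog, the triangle you describe involves $\scfk(K)\otimes\cfk(K)$, which is not of the form $b\cdot E$ for $b$ in the Weyl algebra; so it would not witness holonomicity in the sense the paper actually adopts (Definition~\ref{homqholdef}).

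The paper avoids both gaps by taking a completely different route. The complex $\scfk(K)$ is not built from a projector but as a limit $\lim_{n\to\infty}\hC(K_{r,rn+1})$ of ordinary knot Floer complexes of $(r,rn+1)$-cables, computed concretely via the Hanselman--Watson immersed-curve cabling algorithm; Rozansky's convergence criterion (Lemma~\ref{lem:limit}) guarantees the limit exists. This yields an explicit head--tail decomposition (Proposition~\ref{propprop}): a finite sum of two-term complexes $\Lambda(a_{i,r})$ whose gradings are affine-linear in $r$, plus a shifted unknot tail. The recurrence is then not representation-theoretic at all: one first applies a map $f$ playing the role of $\xM$ to kill the tail, reducing to a finite ``regular reduced sequence'', and then inductively cones off one congruence class of head terms at a time using shift maps $\fD_i$ built from $\xL$ (Lemma~\ref{reducedlemma}, Theorem~\ref{mainresultthm}). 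What this buys is that every map in the recursion is an explicit grading-shift or inclusion of summands, so acyclicity of the iterated cone is immediate and there is no need to control a projector or invoke tensor decompositions.
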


{\bf Organization.} This paper is structured as follows. In \S\ref{classicalsec} we show
that the $S^r$-colored Alexander polynomials are $q$-holonomic in the
sense of Eqn. \eqref{qholeqn}.
In \S\ref{altsec}, using immersed curve technology
\cite{HRW2}, we construct the $S^r$-colored knot Floer homologies
$\shfk(K)$ using Rozansky's infinite braid argument \cite{roz} (which the spectral
sequence from the $S^r$-colored HOMFLY homology \cite[Thm. 2.3]{hogancamp} would
naturally abut). The graded Euler characteristics of $\shfk(K)$ correspond to limits
of Alexander polynomials $\sralex_K(q)$ from \S\ref{classicalsec}. 
We introduce a notion of homological $q$-holonomicity in \S\ref{wahhsec} which would apply to Conjecture \ref{conjconj} above and discuss a few simple examples.  In \S\ref{recsec} we prove that the $S^r$-colored knot Floer
homologies are homologically $q$-holonomic.

{\bf Conventions.} All of the diagrams of immersed curves have been rotated by $-\pi/2$.  When discussing chain complexes and homology, coefficients are always taken to be in the field $\FF_2$.

{\bf Acknowledgements.} The first author would like to thank M. Sto\v{s}i\'c and O. Yacobi for conversations. The authors thank J. Hanselman for his correspondence.

After distributing the preprint of this paper, it was brought to our
attention that D. Chen studied stablization of the Alexander polynomial for
links with full twists \cite{Chen22} and P. Lambert-Cole studied stability
for knot Floer homology under the operation of replacing a crossing with
$n$-twists \cite{Lambertcole}. It would be interesting to compare their work
to materials in Sections \ref{classicalsec} and \ref{altsec} respectively.

\section{Holonomicity of colored Alexander polynomials}\label{classicalsec}
In \S \ref{ncoloredalexsec} we construct the $S^n$-colored Alexander polynomials and in \S \ref{qholalexsec} we show that this sequence is $q$-holonomic.

\subsection{$S^n$-colored Alexander polynomials}\label{ncoloredalexsec}
For a knot $K$ in $S^3$, the Alexander polynomial $\alex_K(q)\in\ZZ[q,q^{-1}]$ \cite{Lickorish}. A priori, $\alex_K(q)$ is only defined up to multiplication by $\pm q^{\pm 1}$, but it can be expressed uniquely when the equation
$$\alex_K(q) = \alex_K(q^{-1})$$
is required to hold (and the lowest order term is positive). In this paper, $\alex_K(q)$ will be used to denote this {\em symmetrized form} of the Alexander polynomial. 

A different normalization, $\alex^1_K(q)$ which will be more important for us in what follows, is determined by requiring the lowest power of $q$ to be $q^0$. More precisely, if $\deg(\alex_K)$ is the largest mononomial power of $q$ occurring in the symmetrized form of $\alex_K(q)$ then the polynomial
\begin{equation}\label{shiftedalexeq}
  \alex^1_K(q) := q^{\deg(\alex_K)} \alex_K(q)\in \ZZ[q]
  \end{equation}
is the {\em positive form}. The $S^r$-colored Alexander polynomials are given by certain limits of positive forms of Alexander polynomials described in this section.

\begin{defn}\label{def:cable}
Given a knot $K$ in $S^3$ and $\nu K \subset S^3$ a regular neighborhood. The {\em $(r,s)$-cable} $K_{r,s}$ of $K$ is the $(r,s)$-torus link in the boundary torus $\partial \nu K$, winding $r$-times around the meridian and $s$-times around the longitude that given by the Seifert framing. When $r$ and $s$ are relatively prime, $\gcd(r,s)=1$, the link $K_{r,s}$ is a knot.
\end{defn}

The Alexander polynomial $\alex_{K_{r,s}}(q)$ of the $(r,s)$-cable $K_{r,s}$ can be expressed in terms of the Alexander polynomial $\alex_K(q)$ of the knot $K$ and the Alexander polynomial of the torus link $\alex_{T_{r,s}}(q)$ respectively \cite{Lickorish}. 
\begin{align}
  \alex_{K_{r,s}}(q) &= \alex_K(q^r) \alex_{T_{r,s}}(q) \nonumber\\
  &= \alex_K(q^r)\cdot \frac{q^{rs}-1}{q^r-1}\cdot \frac{q-1}{q^s-1} \label{cableeq}
\end{align}
By writing the rational functions as geometric series as in Eqn. \eqref{geometricserieseq}, 
$\alex^1_{K_{r,s}}(q) = q^{\deg(\alex_{K_{r,s}})} \alex_{K_{r,s}}(q)$ 
becomes an element of the power series ring
\begin{equation}
\ZZ[[q]] := \varprojlim \ZZ[q]/(q^n).
\end{equation}

The proposition below shows that the sequence of Alexander polynomials obtained by increasing the number of longitudinal full twists converges in the topology of the ring $\ZZ[[q]]$.

\begin{prop}\label{limcableprop}
  There is a limit
  $$\lim_{n\to \infty} \alex^1_{K_{r,rn+1}}(q) = \alex^1_K(q^r) \frac{q-1}{q^r-1},$$
where the rational function represents the power series
\begin{equation}\label{geometricserieseq}
  \frac{q-1}{q^r-1} := (1-q)\sum_{i=0}^\infty q^{ir}.
  \end{equation}
  \end{prop}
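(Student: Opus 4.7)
The plan is to combine the cable formula \eqref{cableeq} with the definition of the positive form and pass to the limit in the $q$-adic topology on $\ZZ[[q]]$.

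First I would establish that the positive-form normalization is multiplicative across the cable factorization. Writing $d = \deg(\alex_K)$, the symmetric polynomials $\alex_K(q^r)$ and $\alex_{T_{r,s}}(q)$ have degrees $rd$ and $(r-1)(s-1)/2$, respectively, and their leading coefficients are nonzero; hence $\deg(\alex_{K_{r,s}}) = rd + (r-1)(s-1)/2$, with no cancellation at top degree. Multiplying \eqref{cableeq} by $q^{\deg(\alex_{K_{r,s}})}$ and distributing the shift between the two factors produces
$$\alex^1_{K_{r,s}}(q) = \alex^1_K(q^r)\cdot \alex^1_{T_{r,s}}(q).$$

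Next I would identify the torus factor. From the rational expression in \eqref{cableeq} one checks directly that
$$\alex^1_{T_{r,s}}(q) = \frac{(1-q^{rs})(1-q)}{(1-q^r)(1-q^s)} \in \ZZ[q]$$
is a polynomial with constant term $1$ and leading term $q^{(r-1)(s-1)}$, so it is already in positive form.

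Finally I would substitute $s = rn+1$ and take the limit in $\ZZ[[q]]$. Writing
$$\frac{1}{1-q^{rn+1}} = \sum_{k\geq 0} q^{k(rn+1)},$$
each term with $k\geq 1$ has $q$-adic valuation at least $rn+1 \to \infty$, so $1/(1-q^{rn+1}) \to 1$ in $\ZZ[[q]]$, and similarly $1-q^{r(rn+1)} \to 1$. Continuity of multiplication in $\ZZ[[q]]$ then yields
$$\alex^1_{T_{r,rn+1}}(q) \longrightarrow \frac{1-q}{1-q^r} = \frac{q-1}{q^r-1},$$
and therefore $\alex^1_{K_{r,rn+1}}(q) \to \alex^1_K(q^r) \cdot \frac{q-1}{q^r-1}$, as claimed. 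The only delicate step is the multiplicativity of the positive form; this amounts to knowing that the top coefficients of symmetric Alexander polynomials are nonzero, which is automatic from the convention, so the argument carries through without obstruction.
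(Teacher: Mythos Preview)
Your proof is correct and follows essentially the same route as the paper: factor via the cable formula \eqref{cableeq} and show that the torus-knot factor converges to $(q-1)/(q^r-1)$ in the $q$-adic topology. The only difference is cosmetic: the paper expands $(q^{rs}-1)/(q^s-1) = 1 + q^s + \cdots + q^{(r-1)s}$ and kills the higher terms, whereas you appeal directly to $1/(1-q^{rn+1})\to 1$ and $1-q^{r(rn+1)}\to 1$ and continuity of multiplication. Your treatment of the positive-form multiplicativity is in fact more explicit than the paper's, which simply writes the shifted identity without justifying the degree bookkeeping.
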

\begin{proof}
For a sequence $\{ f_n \}_{n\geq 0}$ of power series to converge to $f$ in the $q$-adic topology, or
$f_n \to f$ as $n\to\infty$, it suffices to show that for each $n\in \ZZ_{\geq 0}$, there exists $N\in \ZZ_{\geq 0}$ such that 
\begin{equation*}
f\equiv f_i\!\!\!\pmod{q^n} \conj{ for all } i > N.
\end{equation*}

When $s>r$, the Alexander polynomial $\alex_{T_{r,s}}(q)$ satisfies
\begin{equation}\label{rateq}
\frac{q-1}{q^r-1}  \frac{q^{rs}-1}{q^s-1} = \frac{q-1}{q^r-1}\cdot 1 + \frac{q-1}{q^r-1}\cdot q^s + \cdots + \frac{q-1}{q^r-1}\cdot q^{(r-1)s}
  \end{equation}
Using Eqn. \eqref{cableeq} and \eqref{rateq}, it follows that 
 \begin{align*}
 \lim_{n\to\infty} q^{\deg(\alex_{K_{r,rn+1}})} \alex_{K_{r,rn+1}}(q) &= q^{r\deg(\alex_K)} \alex_K(q^r) \lim_{n\to \infty} \left( \frac{q-1}{q^r-1} + q^{rn+1}(\ast)\right)\\
 &= q^{r\deg(\alex_K)} \alex_K(q^r) \left( \frac{q-1}{q^r-1} + 0\right)
 \end{align*}
where $(\ast)$ corresponds to the higher order terms on the right-hand side of in Eqn. \eqref{rateq}.
\end{proof}

\begin{definition}\label{coloredalexdef}
The {\em $S^r$-colored Alexander polynomial} $\unsralex_K(q)$ of a knot $K$ is the limit of Alexander polynomials of the $(r,rn+1)$-cables of $K$ as $n\to \infty$, or
$$\unsralex_K(q) := q^{r\deg(\alex_K)} \alex_K(q^r) \frac{q-1}{q^r-1}$$
When $K$ is the unknot $U$, the Alexander polynomial is $1$; $\alex_U(q) = 1$. So, the $S^r$-colored Alexander polynomial of the unknot is the series below.
\begin{equation}\label{unknoteq}
\unsralex_U(q) = \frac{q-1}{q^r-1} = \sum_{i=0}^\infty (q^{ir} - q^{ir+1})
\end{equation}
The {\em reduced $S^r$-colored Alexander polynomial} of a knot $K$ is the quotient 
\begin{equation}\label{redeq}
\sralex_K(q) := \unsralex_K(q)/\unsralex_U(q).
\end{equation}  
Notice that $\sonealex_K(q)$ agrees with $\unsonealex_K(q)$ and Eqn. \eqref{shiftedalexeq} above.

\end{definition}  

\subsection{Recurrence relation for $S^r$-colored Alexander polynomials}\label{qholalexsec}
\newcommand{\tA}{A}
\newcommand{\weyl}{\aA}
\newcommand{\Do}{D}
\newcommand{\Seq}{\textnormal{Seq}}
\newcommand{\halfbang}{\kern-1.6em}
\newcommand{\SSeq}{\mathcal{Seq}}

In the remainder of this section we will show directly that the sequence of
$r$-colored Alexander polynomials satisfy a recurrence relation of the
form Eqn. \eqref{qholeqn} from the introduction. The relative simplicity of the
expressions $\sralex_K(q)$ and $\unsralex_K(q)$ from Def. \ref{coloredalexdef} allow us to be explicit. This construction will be 
refined in \S \ref{recsec}.

\begin{defn}{(\Seq)}\label{seq}
Let $R:=\ZZ[q,q^{-1}]$. If $T$ is an $R$-algebra then there is a natural $T$-module 
$\Seq(T) := T^{\ZZg}$ which consists of sequences of elements of $T$ which are parameterized by $r\in\ZZg$. Let $\Seq := \Seq(R[[q]])$ be sequences of power series
which are bounded in negative powers of $q$. 
\end{defn}

For each knot $K$, using Def. \ref{coloredalexdef}, there are sequences
\begin{gather}\label{alexseq}
\alex_K,\unalex_K : \ZZg \to R[[q]]\in \Seq \conj{ given by }\\ \alex_K(r) := \sralex_K(q)\conj{ and } \unalex_K(r) := \unsralex_K(q)
\end{gather}
of reduced and unreduced $S^r$-colored Alexander polynomials respectively.
We will find recurrence relations of the form
\begin{gather}\label{receqn}
\sum_{i=n}^{n+m} a_i(q^n,q) \alex_K(i) =0 \conj{ for } a_i(u,v)\in R[u,v]
\end{gather}
This will be done in two steps. First in \S\ref{Dopsec} we find such a recurrence relation
 $\tA_K$ for the sequence of reduced polynomials $\alex_K$.
Then in \S\ref{unknotsec} this relation is combined with the recurrence relation Eq. \eqref{unknoteq} for the unknot $\alex_U(q)$ to produce a recurrence relation for the unreduced sequence $\unalex_K$. 

Before we start, a bit of material about the relationship between recurrence relations and the Weyl algebra is reviewed.

\subsubsection{The Weyl algebra}\label{weylalgsec}
It is sometimes useful to think of recurrence relations in terms of operators.
There are two operators $L,M$ acting on sequences $f \in \Seq$ by
\begin{equation}\label{lmeq}
  (Lf)(n) := f(n+1)\conj{ and } (Mf)(n) := q^n f(n)
  \end{equation}
These operators satisfy the relation $q LM = ML$. The {\em Weyl algebra} is defined by this data
$$\weyl := \ZZ[q,q^{-1}]\inp{L,M}/\inp{qLM - ML}$$
and by virtue of the assignments in Eqn. \eqref{lmeq},  $\Seq$ is an $\weyl$-module.
For a given sequence $f\in \Seq$, the recurrence relation in Eqn. \eqref{receqn} is equivalent to the relation
$$A f = 0$$
when $A\in \weyl$ is the element $A = \sum_{i=n}^{n+m} L^ia_i(M,q)$, $a_i\in\ZZ[q,q^{-1}]\inp{M}$ for $i=n,\ldots,n+m$. The set of recurrence relations satisfied by $f$ can be identified with
$$Ann(f)=\{A\in\weyl : Af = 0\}\subseteq \weyl.$$

\subsubsection{$\Do$-operators}\label{Dopsec}
Let $X\in \ZZ[q,q^{-1}]$ be a $q$-Laurent polynomial. Then there is an operator  $\Do_X= 1-XL\in \weyl$ which acts on sequences. In long form, if $f \in \Seq$ is a sequence then
$$(\Do_Xf)(n) := f(n) - X f(n+1)$$
is a new sequence which is defined in terms of $f$ and multiplication by $X$.

\begin{prop}\label{dpropprop}
Suppose that $f,g \in\Seq$ are sequences, $X,Y\in \ZZ[q,q^{-1}]$ are Laurent polynomials and $C\in \ZZ$ is an integer.
Then the operators $\Do_X$ satisfy the properties below. 
  \begin{enumerate}
\item $\tR$-linearity: $\Do_X(Yf) = Y\Do_X(f)$  and $\Do_X(f + g) = \Do_X(f) + \Do_X(g)$
\item Derivation: $\Do_X(fg) = \Do_X(f)g + (X(Lf)) \Do_1(g)$
\item Commutation: $\Do_X\Do_Y = \Do_Y\Do_X$
\item Annihilation: $\Do_X(q^{Cn}) = q^{Cn}(1-Xq^{C})$. In particular, if $X=q^{-C}$ then $\Do_X (q^{Cn}) = 0$.
  \end{enumerate}
  \end{prop}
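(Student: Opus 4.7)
The plan is to verify each of the four properties directly from the definition $(D_X f)(n) = f(n) - X f(n+1)$, treating $X$ and $Y$ as constants with respect to the index $n$. The key observation throughout is that multiplication by a Laurent polynomial in $q$ commutes with the shift operator $L$ as operators on $\Seq$, since $L$ only acts on the index and leaves the $q$-coefficients untouched.

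For property (1), $R$-linearity, I would simply compute $(D_X(Yf))(n) = Yf(n) - XYf(n+1) = Y(f(n)-Xf(n+1))$, and additivity is immediate. For property (3), the commutation identity, I would expand $D_X D_Y = (1-XL)(1-YL) = 1-(X+Y)L + XYL^2$ using that $LY = YL$ on $\Seq$, and observe that the expression is symmetric in $X$ and $Y$. For property (4), annihilation, the computation is $(D_X(q^{Cn}))(n) = q^{Cn} - Xq^{C(n+1)} = q^{Cn}(1-Xq^C)$, which vanishes precisely when $X = q^{-C}$.

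The only step requiring a moment's care is property (2), the derivation identity. I would write out both sides evaluated at $n$: the left side is $f(n)g(n) - Xf(n+1)g(n+1)$, while the first term on the right gives $f(n)g(n) - Xf(n+1)g(n)$ and the second term $(X(Lf))D_1(g)$ evaluated at $n$ gives $Xf(n+1)(g(n)-g(n+1))$. Adding these yields the desired cancellation of the $Xf(n+1)g(n)$ terms, matching the left side.

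There is no substantial obstacle here — the entire proposition reduces to unpacking the definition and tracking constants versus shifts. The mildly tricky part is to correctly parse the derivation formula (recognizing that $Lf$ rather than $f$ appears as the multiplier in the second summand), after which it becomes a two-line verification.
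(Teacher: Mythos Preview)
Your proposal is correct and follows essentially the same approach as the paper: each property is verified directly from the definition $(D_X f)(n) = f(n) - Xf(n+1)$, with the commutation identity established by expanding and observing symmetry in $X$ and $Y$. The only cosmetic difference is that for property~(3) you work at the operator level $(1-XL)(1-YL) = 1 - (X+Y)L + XYL^2$, whereas the paper expands $(D_X D_Y f)(n)$ applied to a test sequence; both arrive at the same symmetric expression.
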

\begin{proof}
  All of the properties are verified directly from the definition of $\Do_X$.
  \begin{enumerate}
  \item For $\tR$-linearity,
\begin{align*}
  \Do_X(Yf)(n) &= Yf(n) - YX f(n+1)\\
  &= Y(f(n) - X f(n+1))\\
  &= Y \Do_X(f)(n)
 \end{align*}
  \begin{align*}
    \Do_X(f + g)(n) &= (f+g)(n) - X (f+g)(n+1)\\
    &= f(n) + g(n) - X (f(n+1) + g(n+1))\\
    &= (f(n) - X f(n+1)) + (g(n) - X g(n+1))\\
    &= \Do_X(f)(n) + \Do_X(g)(n)
    \end{align*}
\item Derivation,
  \begin{align*}
    \Do_X(fg)(n) &= f(n)g(n) - Xf(n+1)g(n+1) \\
    &= (f(n) - Xf(n+1))g(n) + Xf(n+1)(g(n) - g(n+1))
    \end{align*}
\item For commutation,
\begin{align*}
  (\Do_X\Do_Yf)(n) &= (\Do_Yf)(n) - X(\Do_Yf)(n+1)\\
  &= (f(n)-Yf(n+1)) - X(f(n+1) - Yf(n+2))\\
  &= f(n) - (X+Y) f(n+1) + XY f(n+2)
    \end{align*}
which is symmetric in $X$ and $Y$, so $(\Do_X\Do_Yf)(n) = (\Do_Y\Do_Xf)(n)$.
\item Annihilation: $\Do_X(q^{Cn})(n) = q^{Cn} - X q^{C(n+1)} = q^{Cn}(1-Xq^C)$
    \end{enumerate}
  \end{proof}

The lemma  below is a consequence of the properties from the Prop. \ref{dpropprop} above.

\newcommand{\clex}{\alex}
\newcommand{\theA}{\widebar{A}}
\newcommand{\thecatA}{\widebar{\fA}}

\begin{lemma}\label{knotlem}
  Given a knot $K$ in $S^3$, if the Alexander polynomial of $K$ is written as
  $$\alex^1_K(q) = \sum_{i=1}^N a_i q^{n_i} \conj { where } a_i \in\ZZ\backslash\{ 0\}, n_i \textnormal{ distinct }$$
  then the operator $\tA_K:=\prod_i \Do_{q^{-n_i}}$  annihilates the sequence of reduced Alexander polynomials  $\clex_K(r):= \sralex_K(q) = \alex^1_K(q^r)$
  \end{lemma}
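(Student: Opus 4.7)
The plan is to reduce the annihilation claim to a direct application of the four properties of $\Do$-operators established in Proposition~\ref{dpropprop}. Expanding the definitions, the sequence to kill is
$$r \longmapsto \clex_K(r) = \alex^1_K(q^r) = \sum_{i=1}^N a_i\, q^{n_i r},$$
so I would decompose this into the $N$ elementary sequences $s_i \in \Seq$ defined by $s_i(r) := q^{n_i r}$ and show that each is annihilated by $\tA_K = \prod_j \Do_{q^{-n_j}}$.

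First, since the coefficients $a_i \in \ZZ \subset R$ are constants (independent of $r$), iterated application of the $R$-linearity property (Prop.~\ref{dpropprop}(1)) lets the $a_i$ pass through $\tA_K$, giving
$$\tA_K(\clex_K) = \sum_{i=1}^N a_i\, \tA_K(s_i),$$
so it suffices to prove $\tA_K(s_i) = 0$ for each $i$. Fix such an $i$. The commutation property (Prop.~\ref{dpropprop}(3)) guarantees that the factors of $\tA_K$ pairwise commute, so I can rewrite
$$\tA_K = \Big(\prod_{j \neq i} \Do_{q^{-n_j}}\Big)\, \Do_{q^{-n_i}},$$
i.e., with $\Do_{q^{-n_i}}$ applied first to $s_i$. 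Then the annihilation property (Prop.~\ref{dpropprop}(4)), applied with $X = q^{-n_i}$ and $C = n_i$, yields $\Do_{q^{-n_i}}(s_i) = 0$, and hence $\tA_K(s_i) = 0$.

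I do not anticipate a real obstacle: the argument is essentially bookkeeping, and the operator $\tA_K$ is designed precisely so that each of its factors $\Do_{q^{-n_i}}$ kills exactly one of the exponential summands $q^{n_i r}$. The key structural point being used is that the $\Do_X$'s form a commuting family of ``discrete derivatives'' on $\Seq$, which is the reason we can freely permute the factors so that the annihilating factor acts first in each summand. The only thing to be careful about is that the $n_i$ are distinct (as assumed in the statement), so each exponential $q^{n_i r}$ has a well-defined annihilating factor in the product.
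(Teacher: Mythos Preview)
Your proof is correct and uses essentially the same approach as the paper: both decompose the sequence into monomial summands $q^{n_i r}$ and invoke linearity, commutation, and annihilation (Prop.~\ref{dpropprop} (1), (3), (4)) to kill each one. The only difference is cosmetic: the paper packages the argument as an induction on the number $N$ of nonzero terms, peeling off one monomial at a time, whereas you handle all summands at once by reordering the product so that the relevant annihilating factor acts first.
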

\begin{proof}
  Induction on the number of non-zero terms $N$. If $N=1$ then $\alex_K(q) = a_1q^{n_1}$ and $\Do_{q^{-n_1}}$ annihilates the associated sequence, by Prop. \ref{dpropprop} (4). Suppose $\alex_K(q) = f(q) + aq^n$  where $f(q) := \sum_{i=1}^{N-1} a_i q^{n_i}$, with $a_i \neq 0$ for $i=1,\ldots,N-1$. By induction, the operator
  $\Do' := \Do_{q^{-n_1}} \Do_{q^{-n_2}} \cdots \Do_{q^{-n_{N-1}}}$ annihilates the sequence $r\mapsto f(q^r)$. So
  \begin{align*}
(\tA_K\clex_K)(r)   &= ((\Do'\Do_{q^{-n}})\clex_K)(r) & \\
&= \Do'(\Do_{q^{-n}}(f(q^r) + aq^{rn})) & \\
    &= \Do'\Do_{q^{-n}}f(q^r) + \Do'(a\Do_{q^{-n}}(q^{rn})) & (\ref{dpropprop}\,\, (1))\\
    &= \Do'\Do_{q^{-n}}f(q^r) + \Do'(a\cdot 0) & (\ref{dpropprop}\,\, (4))\\
    &= \Do'\Do_{q^{-n}}f(q^r) &\\
    &= \Do_{q^{-n}}\Do'f(q^r)& (\ref{dpropprop}\,\, (3))\\
    &= \Do_{q^{-n}}(0) &\\
&= 0 &
    \end{align*}
  \end{proof}

\begin{example}
The operator $\Do_{1}\Do_{q^{-1}} \Do_{q^{-2}}$ annihilates the sequence $\alex_{3_1}(q^r) = 1 - q^r + q^{2r}$ of reduced $S^r$-colored Alexander polynomials  from Eqn. \eqref{redeq} associated to the right-handed trefoil $K=3_1$.
  \end{example}

\subsubsection{Incorporating the unknot $U$}\label{unknotsec}
Recall from Eqn. \eqref{unknoteq} that the $S^r$-colored Alexander polynomial of the unknot $U$ can be identified with the power series expansion of the rational function
$$\unsralex_U(q) = \frac{q-1}{q^r-1} = (1-q)\sum_{i=0}^{\infty} q^{ri}$$
\begin{lemma}\label{unknotlem}
If $\unalex_U(r) := \unsralex_U(q)$ defines the sequence $\unalex_U\in \Seq$ then 
 $$(M-1)(L-1)\unalex_U = 0$$
\end{lemma}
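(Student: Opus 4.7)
The plan is to prove the lemma by a direct two-step computation on the sequence $\unalex_U$, being careful about which operator acts first. Under the paper's convention, encoded by the Weyl-algebra relation $qLM=ML$ from \S\ref{weylalgsec}, the leftmost operator in a product is the one applied first; thus $(M-1)(L-1)\unalex_U$ means: first apply $M-1$ to $\unalex_U$, then apply $L-1$ to the resulting sequence. The key observation driving the calculation is that the factor $q^r-1$ produced by $M-1$ cancels the denominator of $\unalex_U(r)=(q-1)/(q^r-1)$, collapsing the whole sequence to a constant.

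First I would evaluate $(M-1)\unalex_U$ directly from the definition of $M$. Since $(M\unalex_U)(r)=q^r\cdot\frac{q-1}{q^r-1}$, we get
\[
((M-1)\unalex_U)(r)\;=\;(q^r-1)\cdot\frac{q-1}{q^r-1}\;=\;q-1,
\]
which is independent of $r$. Hence $(M-1)\unalex_U$ is the constant sequence with value $q-1$.

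Next I would observe that $L-1$ is the forward-difference operator and therefore annihilates every sequence that is constant in $r$:
\[
\bigl((L-1)(q-1)\bigr)(r)\;=\;(q-1)-(q-1)\;=\;0.
\]
Chaining these two steps yields $(M-1)(L-1)\unalex_U=0$, as desired.

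There is essentially no technical obstacle: the only point that requires care is the composition convention, since under the opposite (standard right-to-left) reading the equality $(M-1)(L-1)\unalex_U=0$ would fail in general, whereas under the paper's convention it reduces to the clean cancellation $(q^r-1)\cdot(q-1)/(q^r-1)=q-1$ followed by the fact that differences of a constant vanish.
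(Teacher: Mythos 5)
Your proof is correct and takes essentially the same route as the paper: the paper's own argument is the observation that $(q^r-1)\unalex_U(r)=q-1$ is independent of $r$, so that $(q^r-1)\unalex_U(r)-(q^{r-1}-1)\unalex_U(r-1)=0$, which is precisely your two-step computation (apply $M-1$ to get the constant sequence $q-1$, then note that $L-1$ kills constants). Your explicit remark about the composition order --- that $M-1$ must act first, consistent with the paper's relation $qLM=ML$ --- is left implicit in the paper but is the correct reading, and is worth having spelled out.
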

\begin{proof}
The unknot sequence is $\unalex_U(r) = (q-1)/(q^r-1)$. So
$$(q^r-1)\unalex_U(r) - (q^{r-1}-1)\unalex_U(r-1) = 0$$
which can be rewritten as $(M-1)(L-1)\unalex_U = 0$.
\end{proof}

Now for each knot $K$ in $S^3$, we would like a recurrence relation for the sequence of unreduced $S^r$-colored Alexander polynomials from Def. \ref{coloredalexdef} and Eqn. \eqref{alexseq}.
$$\unalex_K(r) := \unsralex_K(q) = \alex_K(r) \unalex_U(r)$$
Lemmas \ref{knotlem} and \ref{unknotlem} gave recurrence relations, $\tA_K$ and $(M-1)(L-1)$, for the sequences $\alex_K(r)$ and $\unalex_U(r)$ respectively. The existence of a recurrence relation for $\unalex_K(r)$ now follows from the general theory, see \cite[Thm. 5.2 (b)]{GLsurvey} or \cite[Cor. 2.1.6]{Sabbah}. However, we are interested in explicit formulas so a direct calculation is included below.

\begin{thm}\label{alexqholthm}
The element $\theA_K := (M-1)\tA_K \in \weyl$ annihilates the sequence $\unalex_K\in \Seq$ of unreduced $S^r$-colored Alexander polynomials defined by $\unalex_K(r) := \unsralex_K(q)$.
  \end{thm}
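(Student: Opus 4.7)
The plan is to reduce the unreduced case to Lemma~\ref{knotlem} by exploiting the particularly simple behaviour of $M-1$ on the unknot factor of $\unalex_K$. The key identity, immediate from Eqn.~\eqref{unknoteq}, is that $(q^r-1)\unalex_U(r) = q-1$ is independent of $r$; in operator language, $(M-1)\unalex_U \in \Seq$ is the constant sequence with value $q-1 \in R[[q]]$. Since $M$ acts pointwise (it multiplies by a function of $r$ rather than shifting) and the multiplication on $\Seq$ is pointwise, the factorisation $\unalex_K(r) = \alex_K(r)\,\unalex_U(r)$ yields
\[
(M-1)\unalex_K \;=\; \alex_K \cdot (M-1)\unalex_U \;=\; (q-1)\,\alex_K.
\]

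I would then apply $\tA_K$ to this identity and use the $R$-linearity of the $\Do$-operators from Prop.~\ref{dpropprop}(1) to pull out the scalar $q-1$, so that Lemma~\ref{knotlem} gives
\[
\tA_K\bigl((M-1)\unalex_K\bigr) \;=\; (q-1)\,\tA_K\alex_K \;=\; 0.
\]
Read in the Weyl algebra $\weyl$, the left-hand side is precisely $\theA_K\unalex_K$, which is the assertion of the theorem.

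The only step requiring care is the order in which the two operators are applied; applying $M-1$ first collapses the unknot factor to the constant $q-1$ and reduces the statement to Lemma~\ref{knotlem}. If one instead tried to expand $\tA_K(\alex_K\cdot \unalex_U)$ directly via the Leibniz-type rule of Prop.~\ref{dpropprop}(2), correction terms of the form $\Do_1^k(\unalex_U)$ with $k \geq 2$ would appear, and these are not annihilated by a single $(M-1)$, since $(M-1)(L-1)^k\unalex_U \neq 0$ in general for $k\geq 2$ (only the two-term recurrence of Lemma~\ref{unknotlem} is available). The arrangement above sidesteps the derivation rule entirely and makes the theorem an essentially formal consequence of Lemmas~\ref{knotlem} and~\ref{unknotlem} together with the trivial fact that $(M-1)\unalex_U$ is constant.
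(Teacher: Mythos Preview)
Your argument is correct and cleaner than the paper's. The paper starts from $\tA_K\alex_K = 0$, multiplies the identity through by $\unalex_U(r+d)$, uses the explicit form of $\unalex_U$ to convert each $\unalex_U(r+d)$ into $\frac{q^{r+j}-1}{q^{r+d}-1}\unalex_U(r+j)$, clears the denominator, and then reassembles the sum in operator form. You short-circuit all of this by observing at the outset that $(M-1)\unalex_U$ is the constant sequence $q-1$; the factorisation $\unalex_K = \alex_K\cdot\unalex_U$ then gives $(M-1)\unalex_K = (q-1)\alex_K$ directly, and Lemma~\ref{knotlem} finishes. Both routes reach the same identity, but yours avoids the intermediate rational-function manipulation and makes the role of Lemma~\ref{unknotlem} (via the constancy of $(M-1)\unalex_U$) completely transparent.

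One caveat worth flagging in your last sentence: what you have literally established is $\tA_K(M-1)\unalex_K = 0$, since you apply $M-1$ first and then $\tA_K$. The operator named in the statement is $\theA_K = (M-1)\tA_K$, and since $\tA_K$ is a polynomial in $L$ while $M-1$ involves $M$, the two orders do not agree in $\weyl$. The paper's own displayed chain makes the identical identification in its final step, so you are matching the paper here; but strictly speaking the annihilator both arguments actually produce is $\tA_K(M-1)$ rather than $(M-1)\tA_K$.
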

\begin{proof}
Define the coefficients $a_j$ by the relation
$$\tA_K \blex_K(r) = \sum_{j=r}^{r+d} a_j \blex_K(j)$$
Lemma \ref{knotlem} shows $\tA_K \blex_K=0$, so the sum $\sum_{j=r}^{r+d} a_j \blex_K(j)=0$ is zero.
Multiplying through by $\unalex_U(r+d)$ shows that
 $\sum_{j=r}^{r+d} a_j \unalex_U(r+d) \blex_K(j) = 0$.
Using the recurrence 
$\unalex_U(r+d) = (q^{n+j}-1)/(q^{n+d}-1)\unalex_U(r+j)$
from Lemma \ref{unknotlem} for each term this sum gives 
$\sum_{j=r}^{r+d} a_j \frac{q^{n+j}-1}{q^{n+d}-1} \unalex_U(r+j) \blex_K(j) = 0$. Multiplying both sides of this sum by $q^{n+d}-1$ gives us the relation $\sum_{j=r}^{r+d} a_j (q^{n+j}-1) \unalex_U(r+j) \blex_K(j) = 0$.
Simplifying further,
\begin{align*}
0= \sum_{j=r}^{r+d} a_{j} (q^{n+j}-1) \blex_K(r+j) \unalex_U(r+j) 
&= (M-1)\sum_{j=r}^{r+d} a_{j} \blex_K(r+j) \unalex_U(r+j) \\
&= (M-1)\tA_K \blex_K\unalex_U \\
&=\theA_K \unalex_K
\end{align*}
  \end{proof}

\section{Overview of Heegaard-Floer theory}
\label{sec:HFKoverview}
\newcommand{\og}{\overline{\gamma}}
\newcommand{\hog}{\frac{1}{2}\overline{\gamma}}
\newcommand{\oga}{\og}
\newcommand{\omu}{\overline{\mu}}
This section contains a review of the Heegaard Floer materials which are
needed for Section \ref{altsec}. In particular, we recall the bordered
Heegaard Floer theory, its relation to the immersed curves interpretation
of $\widehat{\textit{CFD}}(S^3\backslash \nu K)$ and how to compute $\hfk(K)$ with its
bigrading. The last subsection describes the cabling algorithm for immersed curves.

Heegaard Floer homology is a suite of closed 3-manifold invariants which was
introduced by Ozsv\'ath and Szab\'o \cite{OS03,OS03prop}. 
For a closed 3-manifold $Y$ and a
$\spinc$ structure $\mathfrak{s}$, there are relatively graded $\FF_2$-vector space valued invariants $\textit{HF}^{\infty}(Y, \mathfrak{s}), \textit{HF}^{\pm}(Y, \mathfrak{s}),$ and $\widehat{\textit{HF}}(Y,\mathfrak{s})$ of the pair $(Y,\mathfrak{s})$. While these invariants are specified by combinatorial data, called $w$-pointed Heegaard diagrams, they measure subtle topological information about the manifold $Y$ using pseudo-holomorphic curves.

To a knot $K$ in a $3$-manifold $Y$, one can associate a ($z, w$)-doubly pointed
Heegaard diagram. Topologically, the two marked points and handle decomposition
specify the knot $K$ within $Y$. Algebraically, the extra $z$ marked point induces a filtration on
the Heegaard Floer chain complex associated to the $w$-pointed diagram. When $Y=S^3$, this
results in the bigraded knot Floer homology $\hfk(K)$, \cite{OZ} and \cite{Ras03}, which
categorifies the Alexander polynomial in the sense that
$$\alex_K(q) = \sum_{i,j} (-1)^i q^j \dim \hfk^{i,j}(K).$$

The knot Floer invariants $\hfk(K)$ can be difficult to compute from a
doubly pointed Heegaard diagram, see \cite{Hom20}. However, Hanselman,
Rasmussen and Watson \cite{HRW1} reinterpreted the bordered Floer invariants
and the pairing theorem \cite{LOT18b} by identifying $\hfk(K)$ with a chain
complex of morphisms between immersed curves in the Fukaya category of a
surface. It is this simpler setting which facilitates our study in Section
\ref{altsec}. The remainder of this section contains a review of these ideas
and their essential properties.

\subsection{The curve invariant}
\label{ICs}
  
Bordered Heegaard Floer homology, introduced by Lipshitz, Ozsv\'ath, and
Thurston \cite{LOT18b}, is a relative version of Heegaard Floer homology.
The bordered theory associates to a compact connected surface with one boundary component, $\Sigma$,
a dg algebra $\mathcal{A}$. If $M$ is a compact
$3$-manifold with boundary suitably\footnote{The theory is defined in terms of manifolds with fixed handle
decompositions; we are glossing over these kinds of details here.} parameterized by a map $\phi : \Sigma \to M$ then there
are two bordered invariants related to $(M, \phi)$: a type $D$ structure
$\widehat{\textit{CFD}}(M, \phi)$, that is a left dg module over
$\mathcal{A}$, and a type $A$ structure $\widehat{\textit{CFA}}(M, \phi)$, 
that is a right $\mathcal{A}_{\infty}$-module over $\mathcal{A}$.  When $Y$
is a closed $3$-manifold that is obtained by gluing two boundary parameterized $3$-manifolds, $(M,\psi)$ and
$(N,\phi)$, together using $h:=\psi^{-1}\circ\phi$
\begin{gather}\label{eq:pairing}
  Y = M \sqcup_h N  \Rightarrow\nonumber\\
  \widehat{HF}(Y) \cong H_*(\widehat{\textit{CFA}}(M, \psi) \boxtimes \widehat{\textit{CFD}}(N, \phi))
\end{gather}
there is a product $\boxtimes$ of these modules which recovers the Heegaard-Floer invariant of $Y$ in the sense of Eqn. \eqref{eq:pairing}, see \cite[\S 9]{LOT18b}. 

For $3$-manifold with torus boundary, a parameterization $\phi$ is equivalent to choosing a pair of curves $(\alpha,\beta)$ of $\partial M$ and fixing a basepoint $w \in \partial Y$.  For knot complements $M = S^3 \setminus \nu K$, the parameterization $\phi$ will always be described by the Seifert-framed meridian-longitude basis $\left\{\mu, \lambda\right\}$.

Hanselman, Rasmussen and Watson \cite{HRW1,HRW2}, also \cite[\S 4.4]{HKK},
show that type $D$ structures $\widehat{\textit{CFD}}(M, \phi)$ are
equivalent to immersed curves $\gamma_M$ (elsewhere denoted by
$\widehat{\textit{HF}}(M)$ or $\hat{\Gamma}(M)$) in the surface $T_M =
\partial M \, \setminus w$, decorated by local systems and defined up to
regular homotopy. For a knot complement $M = M_K$ where $M_K := S^3\backslash \nu K$, set
$\gamma_K := \gamma_{M_K}$.

\begin{remark}{(Local systems)}
Knot complements $M = S^3\backslash \nu K$ are of \textit{loop type}
\cite{HW23b}, as a consequence the associated immersed curves $\gamma_K$ 
have trivial local systems.
\end{remark}

\begin{remark}{(Grading arrows)}\label{rem:grarrow}
If the invariant $\gamma_K$ has multiple components then they are
connected by pairs of edges called  grading arrows \cite[Definition 28]{HRW2}. While domains involving grading arrows do not
contribute to the differential, they are considered when determining Maslov
grading differences, as in Prop. \ref{prop:grading}.
\end{remark}

The collection of immersed curves $\ga_M$ in $T_M$ is almost always lifted to a curve $\og_M$ in the covering space $\overline{T}_M$ described next.

\begin{definition} 
\label{def:curvecover}
The covering space $\overline{T}_M$ of $T_M = \partial M \setminus \{w\}$ is the cover whose fundamental group is the kernel of the composition
\[
\pi_1(T_M) \rightarrow \pi_1(\partial M) \rightarrow H_1(\partial M) \rightarrow H_1(M).
\]
When $H_1(M) \cong \mathbb{Z}$, this cover $\overline{T}_{M}$ is homeomorphic to an infinite cylinder 
 with $z$ lifting to infinitely-many marked points at heights of $\frac{1}{2}+\mathbb{Z}$. The meridian $\mu$ lifts to a line
$\omu$ which passes through these lifted marked points (See Figure \ref{fig:RHTcurve}). In this paper, the infinite cylinder will always be pictured as a quotient of a horizontal infinite rectangle, and the heights of the marked points will be are lower to the left and higher to the right. 

A curve $\og_0$ which {\em ``wraps around the cylinder''} is a homologically essential simple closed curve. When the curve is oriented or the two quotient lines are ordered then the points of intersection between $\og_0$ and other curves inherit an order. 
\end{definition}

\begin{notation}
\label{note:spunc}
For a $3$-manifold $M$ with torus boundary, a lift of the immersed curve
invariant $\gamma_M$ to the covering space $\overline{T}_M$ typically
depends on a $\spinc$ structure of $M$ and so must be denoted by
$\og_{(M,\mathfrak{s})}$ (or $\hf(M, \mathfrak{s})$). However, there is a
unique $\spinc$ structure $\mathfrak{s}_0$ on knot complements $M_K :=S^3 \setminus \nu K$ because $H^2(M_K)\cong H_1(M_K,\partial M_K) = 0$. So we
will suppress $\mathfrak{s}_0$ from the notation $\og_K$. We also adopt the
notations $T_K \defeq T_{M_K}$ and $\overline{T}_K \defeq
\overline{T}_{M_K}$ unless otherwise stated.
\end{notation}

\begin{figure}
\begin{tikzpicture}
\draw[gray, very thin, dashed, step=1cm] (0,.5) grid[ystep=0] (2,2.5);
 \draw (0,.125) node {$-1$};
 \draw (1,.125) node {$0$};
 \draw (2,.125) node {$1$};

\draw[thick, blue] (0,1.5) -- (2,1.5);
 \draw[thick, red] (1,2.5) -- (1,.5);
\draw (-6pt,1.5) node {$\omu$};
\draw (1,2.75) node {$\og$};

\foreach \x in {0.5,1.5}
  \draw (\x,1.5) node {$\bullet$};

\draw[ultra thick,->] (0,2.5) -- (2,2.5);
\draw[ultra thick,->] (0,.5) -- (2,.5);

\draw[gray, very thin, dashed, step=1cm] (5,.5) grid[ystep=0] (9,2.5);
\draw[thick,blue] (5,1.5) -- (9,1.5);
  \draw (5,0.125) node {$-2$};
  \draw (6,0.125) node {$-1$};
  \draw (7,0.125) node {$0$};
  \draw (8,0.125) node {$1$};
  \draw (9,0.125) node {$2$};


  \draw[thick,red] (7,.5)
  .. controls +(up:.5cm) and +(down:0.3cm) .. (6,1.5)
  .. controls +(up:.3cm) and +(left:0.3cm) .. (6.5,2)
 .. controls +(right:.3cm) and +(left:0.3cm) .. (7.5,1)
 .. controls +(right:.3cm) and +(down:0.3cm) .. (8,1.5)
 .. controls +(up:.3cm) and +(down:.5cm) .. (7,2.5);

\foreach \x in {5.5,6.5,7.5,8.5}
  \draw (\x,1.5) node {$\bullet$};
\draw[ultra thick,->] (5,2.5) -- (9,2.5);
\draw[ultra thick,->] (5,.5) -- (9,.5);

\draw (4.8,1.5) node {$\omu$};
\draw (7,2.75) node {$\og$};

  \end{tikzpicture}
\caption{The invariants for the unknot $U$ and right-handed $T(2,3)$, lifted to $\overline{T}_M$. Integral heights between the marked points $\bullet$ are indicated underneath.}

\label{fig:RHTcurve}
\end{figure}
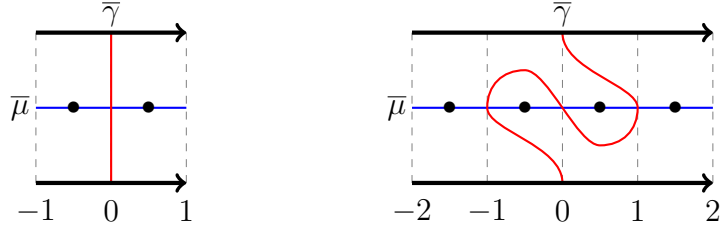

The curve valued knot invariant
$K\mapsto \og_K$ in $\overline{T}_K$ has a few important 
structural properties which we list below.

\begin{remark}{(Structural properties)}\label{rem:strprop}
\begin{enumerate}
\item  The invariant $\og_K$ contains an {\em essential component}
$\og_0$ that wraps around the cylinder $\overline{T}_K$.
All of the other components of $\og_K$ are closed, homologically inessential and lie within an $\epsilon$-neighborhood of the curve $\omu$ \cite[Corollary 64]{HRW2}.
\item The invariant $\gamma_K$ in $T_K$ is invariant under hyperelliptic involution of $T_K$ and so its lift $\og_K$ to $\overline{T}_K$ is unchanged under rotation by $\pi$ \cite[Theorem 7]{HRW2}. As a consequence, we always take the lift of $\og_K$ to be centered at height $0$. 
\end{enumerate}
\end{remark}

\begin{remark}{(Knot invariants)}\label{rem:kinv}
\begin{enumerate}
\item The Seifert genus $g(K)$ is the largest height in $\overline{T}_K$ for which any component of
$\og_K$ intersects $\omu$ \cite[Proposition 48]{HRW2}. 
\item The invariant $\tau(K)$ is the height in $\overline{T}_K$ at which
  $\og_0$ first intersects $\omu$ after ``wrapping around the cylinder.''
\end{enumerate}
\end{remark}

\newcommand{\CF}{\textit{CF}}
\newcommand{\HF}{\textit{HF}}

The pairing theorem for bordered Heegaard-Floer invariants from
Eqn. \eqref{eq:pairing} admits an immersed curves interpretation too. The
immersed curves $\gamma_M$ or $\oga_M$ associated to type D structures are naturally
viewed as objects in the Fukaya category of $T_M$ or $\overline{T}_M$ respectively. In this category, 
there is a chain complex $(\CF(\gamma, \eta), d)$ of morphisms called {\em intersection Floer theory}.
As an $\FF_2$-vector space, $\CF(\gamma, \eta)$ is generated by the transverse points of intersection
$\gamma \pitchfork \eta$ between the two curves. The differential $d$ counts
bigons such as those in Figure \ref{fig:commonbigons}, see \cite[\S 2]{Abouzaid} or \cite[\S 4.1]{HRW1}.
The theorem below restates the pairing theorem from Eqn. \eqref{eq:pairing} in these terms.

\begin{theorem}[{\cite[Theorem 2]{HRW2}}]\label{thm:ICpairing}
 Suppose that a closed $3$-manifold $Y$ is homeomorphic to a quotient
 $Y\cong M \sqcup_h N$, where the $M$ and $N$ are compact, oriented
 $3$-manifolds with torus boundary and $h : \partial N \rightarrow \partial M$ is an orientation reversing homeomorphism. Then there is
 an isomorphism between the Heegaard-Floer homology of $Y$ and the
 intersection Floer homology in $T_{M}$.
\[
\hf(M \sqcup_h N) \cong H_*(\CF(\gamma_M, h(\gamma_N)))
\]

This isomorphism respects relative gradings and $\spinc$ structures.
\end{theorem}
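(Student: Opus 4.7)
The plan is to reduce the statement to the classical bordered Heegaard--Floer pairing theorem and then translate the algebraic side into the geometric language of immersed curves. Concretely, from Eqn. \eqref{eq:pairing} we already know that with $h=\psi^{-1}\circ\phi$ there is a quasi-isomorphism
\[
\widehat{\textit{CF}}(M\sqcup_h N)\;\simeq\;\widehat{\textit{CFA}}(M,\psi)\boxtimes \widehat{\textit{CFD}}(N,\phi),
\]
so everything reduces to identifying the box tensor product on the right with the intersection Floer complex $\CF(\gamma_M,h(\gamma_N))$.

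First I would invoke the equivalence of Hanselman--Rasmussen--Watson \cite{HRW1} (building on Haiden--Katzarkov--Kontsevich \cite{HKK}) between the category of type $D$ structures over the torus algebra $\mathcal{A}$ and a suitable wrapped Fukaya category of the punctured torus. Under this dictionary, $\widehat{\textit{CFD}}(N,\phi)$ corresponds to the decorated immersed multicurve $\gamma_N\subset T_N$, while the type $A$ structure $\widehat{\textit{CFA}}(M,\psi)$ corresponds, via the standard $A$--$D$ duality, to $\gamma_M\subset T_M$ viewed as a co-object. Second, I would use the homeomorphism $h\colon\partial N\to \partial M$ (with basepoint carried to basepoint) to push $\gamma_N$ to the curve $h(\gamma_N)\subset T_M$, so that both curves now live on the same surface $T_M$ and can be put into minimal position by regular homotopy.

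Third, I would argue that the box tensor product becomes morphisms in the Fukaya category. The general principle is that for transverse immersed curves $\gamma_1,\gamma_2$ on a punctured surface, the morphism complex $\mathrm{Hom}(\gamma_1,\gamma_2)$ is, as a chain complex, the intersection Floer complex $\CF(\gamma_1,\gamma_2)$ whose generators are $\gamma_1\pitchfork\gamma_2$ and whose differential counts immersed bigons disjoint from the puncture (as in Figure \ref{fig:commonbigons}). One then checks that under the dictionary above, the idempotent-matching pairs in $\widehat{\textit{CFA}}(M,\psi)\boxtimes \widehat{\textit{CFD}}(N,\phi)$ correspond bijectively to intersection points $\gamma_M\pitchfork h(\gamma_N)$ in $T_M$, while the sequences of algebra/coefficient actions assembling the box differential correspond term-by-term to the bigons counted by $\partial$ in $\CF$.

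The main obstacle is the last identification: verifying that the box tensor differential matches the bigon count on the nose, rather than only up to quasi-isomorphism, and that this matching is compatible with the relative Maslov grading and the $\spinc$ decomposition. The grading comparison is handled by lifting $\gamma_M$ and $h(\gamma_N)$ to the covering space $\overline{T}_M$ of Definition \ref{def:curvecover} and reading off relative heights of intersection points, matching the bordered gradings computed from expected dimensions of holomorphic strips; the $\spinc$ decomposition corresponds to the partition of intersection points according to lifts in $\overline{T}_M$, mirroring the $\spinc$-refinement of the bordered pairing theorem. The cleanest route to the differential matching is via a \emph{loop-type} reduction of both $\widehat{\textit{CFA}}$ and $\widehat{\textit{CFD}}$, putting each curve in a train-track normal form whose generators biject with a standard arc intersection and whose structure maps are read directly off the arcs of $\gamma_M$ and $h(\gamma_N)$; once both sides are in such form, the matching becomes a local computation, carried out patch by patch on $T_M$.
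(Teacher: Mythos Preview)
The paper does not prove this statement at all: Theorem~\ref{thm:ICpairing} is quoted verbatim from \cite[Theorem 2]{HRW2} and used as a black box throughout Section~\ref{sec:HFKoverview} and beyond. There is no proof in the paper to compare your proposal against.

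That said, your sketch is a reasonable outline of how the result is actually established in \cite{HRW1,HRW2}: one does indeed pass through the bordered pairing theorem of \cite{LOT18b}, reinterpret type $D$ structures as immersed curves via the extendability and structure theorems, and then identify the box tensor product with the Lagrangian intersection complex. The part you flag as the ``main obstacle'' --- matching the box differential with the bigon count and tracking gradings and $\spinc$ structures --- is precisely where the real work in \cite{HRW1,HRW2} lies, and your remark about loop-type/train-track normal forms is on point. But be aware that the actual argument requires substantial machinery (the structure theorem for extendable type $D$ modules, careful treatment of local systems and bounding chains) that your sketch only gestures at; it is not something one can fill in as a routine exercise.
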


\subsection{Knot Floer Homology}\label{sec:knotfloer}
\newcommand{\hC}{\hat{C}}
\newcommand{\Cm}{C^{-}}
\newcommand{\gCm}{gC^{-}}

Knot Floer homology $\hfk(K)$ for a knot $K\subset S^3$ is computed from a Heegaard diagram with two basepoints, $w$ and $z$ \cite[\S 3.1]{OZ}. Such a diagram is constructed by upgrading single $w$-pointed
Heegaard diagram for $S^3$ to a double $(w,z)$-pointed Heegaard diagram.
After adding the extra $z$-basepoint, the Maslov grading, sensitive to bigons covering the $w$-point, is retained and a second {\em Alexander grading} is introduced which counts bigons covering the new $z$-point, see \cite{Hom20}.

The pairing Theorem \ref{thm:ICpairing} above allows us to introduce 
knot Floer homology $\hfk(K)$ in Theorem \ref{thm:IChfk} below. In this setting, we want to
glue the knot complement $M=S^3\backslash \nu K$ with the $\infty$-framed solid torus.
This corresponds to a pairing between  the curve $\og_K$ and the meridian
curve $\omu$.  Without any modifications to Thm. \ref{thm:ICpairing}, this pairing produces a chain complex
which computes only the $w$-pointed homology $\hf(S^3)$ of the gluing. 

In order to recover knot Floer homology $\hfk(K)$ and it's Alexander
filtration, the covering space $\overline{T}_K$ is modified by replacing the
lifts of the marked point $w$ 
at heights $(\frac{2n+1}{2},0)$ with small disks $D_n$, (with radius small
enough not to intersect the curve $\og_K$). Within each disk $D_n$ are lifts
of marked points $z$ and $w$ at heights $(\frac{2n+1}{2}, \epsilon)$ and
$(\frac{2n+1}{2},-\epsilon)$, respectively.  The meridian $\omu$ is now the
curve that goes between consecutive pairs of marked points.

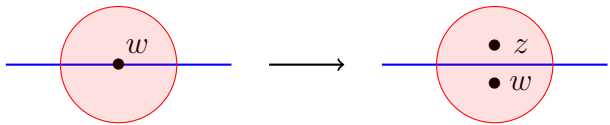
\begin{figure}[!h]
\centering
\begin{tikzpicture}

\draw[thick, blue, -] (0,3) -- (3,3);
  \draw (1.5,3) node {$\bullet$};
  \draw (1.5+.25,3.25) node {$w$};
\filldraw[red,fill opacity=0.125] (1.5,3) circle (22pt);

\draw[thick,->] (3.5,3) -- (4.5,3);

\draw[thick, blue, -] (5,3) -- (8,3);

  \draw (5+1.5,3.25) node {$\bullet$};
  \draw (5+1.85,3.25) node {$z$};
  \draw (5+1.5,3-.25) node {$\bullet$};
  \draw (5+1.85,3-.25) node {$w$};
\filldraw[red,fill opacity=0.125] (5+1.5,3) circle (22pt);

\end{tikzpicture}
\caption{To construct $C^-$ $w$-marked points are replaced with $(z,w)$-pairs of marked points}
\label{fig:markedptenlargement}
\end{figure}

\newcommand{\vdiff}{\partial^v}

To reconstruct the knot Floer homology, begin by setting
$$\Cm(\oga_M, \omu) := \mathbb{F}_2[U]\otimes_{\mathbb{F}_2} \CF(\oga_M,\omu).$$
This is generated over $\mathbb{F}[U]$ by the intersection points $\oga_M \pitchfork \omu$ between $\oga_M$  and $\omu$. 
The additional $z$-basepoints added in the previous paragraph allow us to count bigons in new ways.
If $B$ is a bigon then denote by $n_w(B)$ and $n_z(B)$ the number of times $B$ covers $w$ and $z$ respectively.
Let $N_i^w(x,y)\in\mathbb{F}_2$ be the number of bigons $B$ from $x$ to $y$ for which $n_w(B)=i$.
The differential 
$$d : \Cm(\oga_M,\omu) \to \Cm(\oga_M,\omu) \conj{ is } d(x) := \sum_{i=0}^{\infty} \sum_{y} U^{i}N_{i}^{w}(x,y) \cdot y.$$
Without modification this construction does not recover knot Floer complex $\textit{CFK}^-(K)$. There is a homotopy equivalence \cite[Warning \S 4.2]{HRW2} between associated graded objects
$$\gr C^-(\og_M, \omu) \simeq \gr \textit{CFK}^-(K)$$
with respect to the Alexander filtration discussed below. The issue is that $C^-(\og_M, \omu)$ is not necessarily a chain complex because of the existence of teardrop or fishtail shaped regions. To obtain a chain complex $C^-(\og_M,\omu)$ which is equivalent to $\textit{CFK}^-(K)$ one must deform the construction using a bounding chain that counts these teardrop-shaped regions \cite[Def. 3.12]{Hanselman23}.

We can recover $\hfk(K)$ without invoking this extension. The hat-version of the $\Cm$-construction is obtained by setting $U=0$.
\begin{equation}\label{eq:hatc}
  (\hC(K),\vdiff) := (\Cm(\og_K,\omu), d)\otimes_{\FF_2[U]} \FF_2
\end{equation}  
This is always a chain complex because it agrees with the $U=0$ specialization of the deformed $C^-$ mentioned above\footnote{This is because when $U=0$, the differential of the deformed $C^-$-complex defined by Hanselman no longer counts teardrop-shaped regions when they cover a basepoint (as they cover a $w$-marked point) and any teardrop-shaped regions which do not cover a basepoint can be removed by homotopy invariance. So the contribution of the bounding chain to the differential at $U=0$ is zero. }.
The homology of the chain complex $\hC(K)$ computes knot Floer homology $\hfk(K)$ in the sense of Thm \ref{thm:IChfk} below. Before stating this theorem, we recall the filtration, the Alexander grading and the Maslov grading.

The {\em Alexander grading} $A$, or the $q$-grading, is computed using the intersection numbers $n_z$ and $n_w$ of bigons with the $z$ and $w$-basepoints. In more detail, after choosing the symmetric presentation in Remark \ref{rem:strprop} (2),
there is a function $A$ on generators $\og_K\pitchfork \omu$ of $\Cm(\og_M, \omu)$. If $B$ is a bigon from $x$ to $y$ then 
$$A(x)-A(y)=n_z(B)-n_w(B) \conj{ and } A(U \cdot x) = A(x)-1.$$
There is a {\em filtration} $F^n \Cm(\og_M,\omu) := \{ x : A(x)\leq n \}$; the associated graded $\gr\Cm(\og_M, \omu)$ is the chain complex which is obtained by considering only bigons $B$ with $n_z(B)=0$.

The {\em Maslov grading} $M$ is the $t$-grading or homological grading. Suppose that 
$x$ and $y$ are two points of intersection between $\og_K$ and $\omu$, and $B$ is a bigon from $x$ to $y$. So the boundary of $B$
consists of two piecewise smooth paths one from $x$ to $y$ in $\og_K$ and the other from $y$ to $x$ in $\omu$. 
The relative Maslov gradings can be computed using the equation,
\begin{equation}\label{eq:grformula}
M(y)-M(x) = 2\left(\text{Wind}_w(B) + \text{Wght}(B) - \text{Rot}(B)\right).
\end{equation}
see  \cite{Han23a}. In this formula, $\text{Wind}_w(B)$ is the net winding number of $B$ around enclosed $w$ basepoints. The other two terms involve grading arrows, see Rmk. \ref{rem:grarrow} or \cite[\S 2.1]{Han23a}, $\text{Wght}(B)$ is the sum of weights (counted with sign) of all grading arrows traversed by $B$ and
$\text{Rot}(B)$ is $\frac{1}{2\pi}$ times the total counterclockwise rotation along the smooth
sections of $B$. Alternatively,
$$\text{Rot}(B)=\frac{1}{2\pi}(2\pi - a\frac{\pi}{2} - c\pi),$$ 
where  $a$ and $c$ are the numbers of corners and cusps (from grading arrows) traversed respectively.

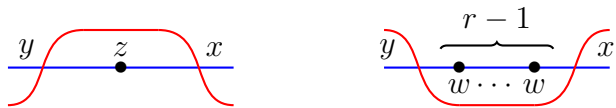
\begin{figure}[!h]
\centering
\begin{tikzpicture}

\draw[thick, blue, -] (0,3) -- (3,3);
  \draw (1.5,3) node {$\bullet$};
  \draw (1.5,3.25) node {$z$};

  \draw[thick,red] (0,2.5) .. controls +(right:.6cm) and +(left:.7cm) .. (1,3.5);
  \draw[thick,red,-] (1,3.5) -- (2,3.5);
  \draw[thick,red] (2,3.5) .. controls +(right:.7cm) and +(left:.6cm) .. (3,2.5);

  \draw (0.25,3.25) node {$y$};
  \draw (3-.25,3.25) node {$x$};

\draw[thick, blue, -] (5,3) -- (8,3);

  \draw (5+1,3) node {$\bullet$};
   \draw (5+2,3) node {$\bullet$};

  \draw (5+1,3-.25) node {$w$};
  \draw (5+1.5,3-.25) node {$\cdots$};
  \draw (5+2,3-.25) node {$w$};

\draw [thick, decoration={brace}, decorate]  (5+.75,3.25) -- (5+2.25,3.25) node [pos=0.5,anchor=north,yshift=0.7cm]   {$r-1$};
  \draw (5+0.25-0.2,3.25) node {$y$};
  \draw (5+3-.25+0.2,3.25) node {$x$};

  \draw[thick,red] (5+0,3.5) .. controls +(right:.6cm) and +(left:.7cm) .. (5+1,2.5);
  \draw[thick,red,-] (5+1,2.5) -- (5+2,2.5);
  \draw[thick,red] (5+2,2.5) .. controls +(right:.7cm) and +(left:.6cm) .. (5+3,3.5);

\end{tikzpicture}

\caption{Bigons from $x$ to $y$ and from $y$ to $x$, see Ex. \ref{ex:bigons}}
\label{fig:commonbigons}
\end{figure}

\begin{example}{(Maslov gradings)}\label{ex:bigons}
Figure \ref{fig:commonbigons} shows two important bigons for us. Each black
circle is understood to have $z$ and $w$ marked points above and below,
respectively. For both bigons, $\text{Wght}(B)=0$ since there are no grading
arrows traversed by the bigons. The bigon on the left satisfies
$\text{Rot}(B) = \frac{1}{2}$ and bigon on the right satisfies
$\text{Rot}(B) = -\frac{1}{2}$; the sign appears because the boundaries of
the bigons have opposite orientations.  For the bigon on the left, there are
no $w$-points so Eqn. \eqref{eq:grformula} gives $M(y)-M(x)=1$.  For the
bigon on the right, there are $(r-1)$-many $w$-points and we have $M(y)-M(x)
= 2r-3$.
\end{example}

Once the chain complex $\hC(K)$ has been given the structure above, the theorem below can be stated more precisely.

\begin{thm}(\cite[Thm. 52]{HRW2})\label{thm:IChfk}
The filtered complex $(\hC(K),\vdiff)$ is homotopy equivalent to the filtered complex $\cfk(K)$ associated to a knot by \cite{OZ} and \cite{Ras03}. In particular, the associated graded complex computes the knot Floer homology
$$\hfk(K) \cong H_*(\gr \hC(K))$$
\end{thm}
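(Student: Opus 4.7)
The plan is to reduce the theorem to the bordered Heegaard-Floer pairing theorem, upgraded to track the Alexander filtration through the introduction of $(z,w)$-paired basepoints. First I would observe that $S^3$ is obtained from $M_K := S^3 \setminus \nu K$ by the meridional ($\infty$-framed) solid torus filling. Under the immersed curves correspondence of \cite{HRW1,HRW2}, the bordered invariant of this solid torus is represented by the meridian curve $\omu$ in $\overline{T}_K$. The pairing theorem (Thm.~\ref{thm:ICpairing}) then identifies $\widehat{HF}(S^3)$ with $H_*(\CF(\og_K,\omu))$. At this stage only the trivial invariant $\widehat{HF}(S^3) \cong \FF_2$ is recovered, with no knot information present.

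Next I would upgrade the pairing to the doubly-pointed setting, which is what computes $\cfk(K)$ in the original theory of \cite{OZ,Ras03}. Combinatorially, a doubly-pointed Heegaard diagram for $(S^3,K)$ differs from a singly-pointed diagram for $S^3$ by the addition of a $z$-basepoint on the opposite side of the meridional disk from $w$, along the knot $K$. In the immersed-curve picture this corresponds exactly to the local enlargement depicted in Figure~\ref{fig:markedptenlargement}: each $w$ lift is replaced by a small disk carrying a $(z,w)$-pair, and the lift $\omu$ of the meridian is rerouted to pass between the two marked points. Applying the bordered pairing theorem to this enhanced diagram produces a filtered chain complex identified with $\cfk(K)$, where the Alexander filtration is induced by the weight $n_z(B)-n_w(B)$ attached to each bigon.

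The third step is to verify that the differential, the Alexander filtration, and the Maslov grading on $(\hC(K),\vdiff)$ really match those on $\cfk(K)$, with the main obstacle being the existence of teardrop or fishtail regions in $\overline{T}_K$. The differential on $\hC(K)$ counts only bigons with $n_w(B)=0$, which agrees with the $U=0$ specialization of the deformed $C^-$-complex of \cite{Hanselman23}; the teardrop obstruction to $d^2=0$ that forces the bounding-chain deformation at $U$-level disappears here, since any teardrop covering a $w$-basepoint carries a factor of $U$ and hence vanishes at $U=0$, while teardrops disjoint from basepoints can be removed by regular homotopy of $\og_K$ without changing the filtered homotopy type. The Alexander filtration matches by construction. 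The subtlest point is identifying formula~\eqref{eq:grformula} with the standard Maslov index of a holomorphic disk in the doubly-pointed Heegaard diagram: $\text{Rot}(B)$ accounts for the smooth-corner contribution, $\text{Wind}_w(B)$ for the enclosed $w$-basepoints weighted by $U$, and $\text{Wght}(B)$ for the grading-arrow corrections that arise when $\og_K$ has multiple components (cf.\ Remark~\ref{rem:grarrow}). Once these identifications are in place, passing to the associated graded with respect to the Alexander filtration recovers the claimed isomorphism $H_*(\gr \hC(K)) \cong \hfk(K)$.
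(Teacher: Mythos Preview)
The paper does not give its own proof of this theorem: it is stated as a citation of \cite[Thm.~52]{HRW2}, and the surrounding text in \S\ref{sec:knotfloer} serves only to set up notation and explain why the $U=0$ specialization is well-defined. Your proposal is essentially a fleshed-out version of that same setup discussion --- the meridional filling, the $(z,w)$-basepoint enhancement of Figure~\ref{fig:markedptenlargement}, the teardrop/bounding-chain issue and its vanishing at $U=0$, and the grading identifications --- so you are not diverging from the paper but rather supplying the argument it outsources. As a sketch this is accurate and tracks the actual proof in \cite{HRW2}; just be aware that the paper itself treats this as a black-box citation rather than something it proves.
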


Now that we understand how to study knot Floer homology of a curve using
immersed curve technology, the two remaining subsections provide essential
tools for understanding and manipulating the knot Floer homology of a curve.

\newcommand{\dgen}{\chi_0}
\newcommand{\bgen}{b}
\subsection{Vertical Simplification}\label{sec:vertsimp}

In this section we introduce vertically simplified bases. This will give us a reasonable toolkit for understanding the structure of $\hC(K)$ from Eqn. \eqref{eq:hatc} and it's relationship to the curve $\og_K$. 

\begin{definition}{(Vertically simplified basis)}
A basis $\{x_i\}$ for a chain complex $(C,d)$ is
{\em vertically-simplified} when for each basis element $x_i$, exactly one of
the following conditions holds:
\begin{enumerate}
\item $d x_i = x_j$ for some $x_j$,
\item $x_i$ is in the kernel but not the image of $d$,
\item $x_i = d x_j$ for a unique $x_j$.
\end{enumerate}
\label{def:vertsimpCFK}
\end{definition}

\begin{remark}\label{rmk:vertsimp}
  \begin{enumerate}
\item Vertical simplification is equivalent to 
 choosing an isomorphism of the form
  $$C\cong  \left(\oplus_i \Lambda(x_i) \right) \oplus D,$$
where $d$ vanishes on $D$, $\Lambda(x_i):= \FF_2\inp{1_i,x_i}$ and $d(x_i) = 1_i$.
\item A bicomplex $(C,d,\delta)$ can be vertically-simplified with respect to $d$ or with respect to $\delta$. In general, it is unknown whether all of the bicomplexes appearing in knot Floer theory can be simplified with respect to both differentials simultaneously.
\end{enumerate}
\end{remark}

We now fix some notations related to the vertical simplification of $\hC(K)$.

\begin{notation}\label{notation:vertsimp}
\begin{enumerate}
\item For each knot $K$, the chain complex $\hC(K)$ admits a vertically-simplified basis. Since $H_{\ast}(\cfk(K), \partial^{\text{v}}) \cong \hf(S^3) \cong \mathbb{F}_2\inp{\dgen}$, the subspace $D$ above is $1$-dimensional.  There 
are isomorphisms
\begin{equation}\label{eq:vertsimp}
  \hC(K) \cong \FF_2\inp{\dgen} \opp \bigoplus_{i\in I} \Lambda(x_i)
\end{equation}  
where $\Lambda(x_i) = \FF_2\inp{1_i, x_i}$ and $\partial^{\text{v}}(x_{i}) = 1_i$.
\item The set $I$ is ordered so that $j \geq i$ implies $A(x_{j}) \leq A(x_{i})$. 
\item The {\em distinguished generator} $\dgen$ generates $\hf(S^3)$ under the identification from Theorem \ref{thm:IChfk}.
\item There is a {\em bottom generator} $\bgen$  which has the least Maslov grading among generators $y$ which satisfy $A(y) = -g(K)$.
  \end{enumerate}
  \end{notation}

\begin{remark}\label{rem:geodesic}
A vertically-simplified basis ensures that no two generators are connected
by a differential that does not reduce the Alexander grading, such a basis is called {\em reduced}.
Equivalently, there are no bigons in the differentials of $\hC(K)$ or $\Cm(\og_K, \overline{\mu})$
that do not cross marked points. Note that, homotopy invariance of $\og_K$ allows us to assume each curve segment between generators is length-minimizing. This also produces a reduced chain complex.
\end{remark}

If a chain complex $\textit{CFK}^-(K)$ which has been simultaneously horizontally and vertically simplified  then 
the algorithm below tells us how to reconstruct the immersed curve $\og_K$. This proposition is the immersed curves reformulation of \cite[Theorem 11.31]{LOT18b}.

\begin{proposition}{(\cite[Proposition 47]{HRW2})} 
Given a basis for $\textit{CFK}^-(K)$ which is both horizontally-simplified and vertically-simplified, the curve invariant $\oga_K$ is obtained in $\overline{T}_K$ by:
\begin{enumerate}
\item For each basis element $x$ of $\textit{CFK}^-(K)$, place a short verticle segment at height $A(x)$.
\item If $\textit{CFK}^-(K)$ contains a vertical arrow from $x$ to $y$, then connect the top endpoints of their corresponding short segments by an arc.
\item If $\textit{CFK}^-(K)$ contains a horizontal arrow from $x$ to $y$, then connect the bottom endpoints of their corresponding short segments by an arc.
\item There will remain a unique segment with an unattached top endpoint and a unique segment with an unattached bottom endpoint. 
Attach the top endpoint to the top edge of $\overline{T}_M$ at height $0$ and  attach the remaining bottom endpoint to the bottom edge of $\overline{T}_M$ at height $0$.
\end{enumerate}
\label{prop:HFfromCFK}
\end{proposition}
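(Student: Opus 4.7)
The plan is to compose two translations, reducing the algorithm to known machinery. First, I would pass from $\textit{CFK}^-(K)$ equipped with a simultaneously horizontally- and vertically-simplified basis to an explicit presentation of $\widehat{\textit{CFD}}(S^3\setminus \nu K)$, and then I would pass from that type $D$ structure to the immersed multicurve $\og_K$ in $\overline{T}_K$.

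For the first translation, I would invoke \cite[Thm.~11.31]{LOT18b}, which gives an algorithm taking a simultaneously simplified basis of $\textit{CFK}^-(K)$ and producing a specific presentation of $\widehat{\textit{CFD}}(S^3\setminus \nu K)$ on the same generating set. Each vertical arrow $x\to y$ becomes a chain of torus-algebra arrows whose length reflects $A(x)-A(y)$; each horizontal arrow becomes an analogous chain of the complementary type; and the unique pair of unmatched generators is assembled into an \emph{unstable chain} whose internal form is dictated by $\tau(K)$ (Rmk.~\ref{rem:kinv}(2)). For the second translation, I would apply the correspondence of \cite{HRW1,HRW2} between bordered type $D$ structures and immersed multicurves in the parameterized torus $T_K$. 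Lifting to $\overline{T}_K$, each generator becomes a transverse intersection point of $\og_K$ with a lift of the meridian $\omu$, realized by a short segment placed at height equal to the Alexander grading of that generator. The height assignment is forced because the intersection pairing with the $\infty$-framed solid torus must recover $\hfk(K)$ with its Alexander filtration, in the sense of Thm.~\ref{thm:IChfk}.

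Under this correspondence, the chain of arrows coming from a vertical arrow in $\textit{CFK}^-$ lifts to an arc passing \emph{above} $\omu$ that connects the top endpoints of the two associated segments, while the chain coming from a horizontal arrow lifts to an arc passing \emph{below} $\omu$ connecting the bottom endpoints. This accounts for steps (1)--(3) of the algorithm. Throughout, the reducedness of the chosen basis (Rmk.~\ref{rem:geodesic}) guarantees that no additional arcs are introduced beyond those produced by the arrows, and that every arc can be taken to be length-minimizing.

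The main obstacle will be step (4), the treatment of the two unmatched endpoints. These correspond exactly to the unstable chain in \cite[Thm.~11.31]{LOT18b}, which under the second translation must produce the essential component $\og_0$ of Rmk.~\ref{rem:strprop}(1) wrapping once around the cylinder $\overline{T}_K$. I would carry out a case analysis on the sign and magnitude of $\tau(K)$ to match the unstable-chain combinatorics with the prescription that the unique unattached top endpoint connect to the top edge and the unique unattached bottom endpoint connect to the bottom edge. The requirement that both attachments occur at height $0$ then follows from the hyperelliptic-symmetric normalization of $\og_K$ in Rmk.~\ref{rem:strprop}(2), which pins down the vertical centering of the entire picture.
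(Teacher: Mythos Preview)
Your approach is sound and is exactly the route the paper indicates: immediately before stating the proposition, the authors write that ``this proposition is the immersed curves reformulation of \cite[Theorem~11.31]{LOT18b},'' and the proposition itself is quoted from \cite[Proposition~47]{HRW2} without proof. So there is no proof in the paper to compare against; your two-step translation (simplified basis $\to$ type $D$ structure via \cite[Thm.~11.31]{LOT18b}, then type $D$ $\to$ immersed curve via \cite{HRW1,HRW2}) is precisely the intended argument.
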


As noted in Rmk. \ref{rmk:vertsimp} (2), we cannot assume that such a simultaneously simplified basis exists. However, by 
ignoring step (3) and the latter half of step (4), we can obtain partial information about $\og_K$ discussed below (that is used in Section \ref{altsec}). The next corollary will play a role in Lemma \ref{lem:cablingstr}.

\begin{cor}\label{cor:structurecor}
If $K$ is a knot in $S^3$ and 
$\hC(K) \cong \FF_2\inp{\dgen} \opp \bigoplus_{i\in I} \Lambda(x_i)$
a vertically-simplified basis, with elements ordered as in Notation \ref{notation:vertsimp}, then there is a half-completed curve $\hog_K$ in $\overline{T}_K$ which satisfies
\begin{enumerate}
\item Each generator $y$ corresponds to a short vertical segment at height given by its Alexander grading $A(y)$ in $\overline{T}_K$.
\item A horizontal curve segment connects each pair $x_{i}$ and $1_{i}$ along their top endpoints corresponding to the length $n_i$ arrow in $\partial^{\text{v}}$.
\item The vertical segment corresponding to the distinguished generator $\dgen$ connects to the top edge of $\overline{T}_K$.
\item The vertical segment corresponding to the bottom generator $\bgen$ is a vertical segment of lowest height in $\overline{T}_K$.
\end{enumerate}
  \end{cor}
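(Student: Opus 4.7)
The plan is to execute the construction in Proposition \ref{prop:HFfromCFK}, but restrict to those steps whose input is purely vertical-simplification data, omitting the steps that require a simultaneous horizontal simplification. The resulting object will be the half-completed curve $\hog_K$, and the four claims of the corollary correspond precisely to steps (1), (2), and the two halves of step (4) from Proposition \ref{prop:HFfromCFK}.

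For item (1), I would invoke Theorem \ref{thm:IChfk}, which identifies the generators of $\hC(K)$ with the intersection points $\og_K\pitchfork \omu$. The Alexander filtration on $\hC(K)$ arises from the bigon count $A(y)-A(x)=n_z(B)-n_w(B)$, so that after a reduced/length-minimizing choice of representative for $\og_K$ (cf. Remark \ref{rem:geodesic}), the height at which a given intersection point lies on $\omu$ in $\overline{T}_K$ equals its Alexander grading. Each generator is thereby represented by a short vertical segment at the prescribed height. For item (2), the vertical differential $\vdiff$ on $\hC(K)$ is obtained by setting $U=0$ in Eqn. \eqref{eq:hatc}, so $\vdiff$ counts bigons with $n_w(B)=0$; these are exactly the bigons whose boundaries close off across the tops of adjacent short segments, as in the left diagram of Figure \ref{fig:commonbigons}. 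Because the basis has been vertically simplified, each pair $(x_i,1_i)$ is connected by a unique such bigon, which we record by drawing a horizontal arc between the top endpoints of the corresponding short segments.

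For item (3), I would argue that $\dgen$ is the unique generator whose segment must attach to the top edge of $\overline{T}_K$. The vertical arrows from item (2) already join the top endpoints of every pair $(x_i,1_i)$, so after step (2) the only segment with an unattached top endpoint is the one for $\dgen$. On the other hand, by Remark \ref{rem:strprop}(1) the immersed curve $\og_K$ contains an essential component $\og_0$ that wraps around the cylinder $\overline{T}_K$ and therefore crosses between the top and bottom edges of the pictured rectangle; the centering convention in Remark \ref{rem:strprop}(2) places the unique intersection of $\og_0$ with $\omu$ at height $0$, which is exactly the Alexander grading of the generator of $\hf(S^3)$ under Theorem \ref{thm:IChfk}. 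Hence $\dgen$ is precisely the segment that must meet the top edge of $\overline{T}_K$, which gives item (3). For item (4), Remark \ref{rem:kinv}(1) identifies $g(K)$ as the largest height at which $\og_K$ meets $\omu$, and symmetry (Remark \ref{rem:strprop}(2)) puts the smallest such height at $-g(K)$. By Notation \ref{notation:vertsimp}(4) the bottom generator $\bgen$ sits at Alexander grading $-g(K)$, so its short vertical segment is of lowest height in $\overline{T}_K$.

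The only genuinely delicate point is item (3): one must verify that the segment left "dangling at the top" after performing step (2) is indeed the one belonging to the essential component, rather than (say) one hidden inside a closed inessential component. This is where the characterization of $\dgen$ as the unique survivor of $\vdiff$ and its identification with the generator of $\hf(S^3)$ under the pairing with $\omu$ is essential; every other top endpoint is consumed by some arrow $\vdiff x_i = 1_i$, and every inessential component of $\og_K$ lies in an $\epsilon$-neighborhood of $\omu$ by Remark \ref{rem:strprop}(1), so its intersections with $\omu$ are paired off by such bigons. Once this matching is made, the remaining steps are routine bookkeeping from Proposition \ref{prop:HFfromCFK}, and we obtain the half-completed curve $\hog_K$ with the stated four properties.
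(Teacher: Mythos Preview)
Your approach is correct and matches the paper's: the paper's entire justification for this corollary is the sentence preceding it, namely that one runs Proposition~\ref{prop:HFfromCFK} while omitting step~(3) and the second half of step~(4), which is exactly what you do. Your write-up supplies considerably more detail than the paper, and one of those details is imprecise (the essential component $\og_0$ need not have a \emph{unique} intersection with $\omu$; see the trefoil in Figure~\ref{fig:RHTcurve}), but your core argument for item~(3)---that $\dgen$ is the only generator whose top endpoint is left unattached after step~(2)---is correct and already suffices without the additional commentary about $\og_0$.
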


\subsection{Cabling immersed curves}
\label{sec:cablingcurves}

The Alexander polynomial transforms according to Eqn. \eqref{cableeq} under the cabling operation described by Def. \ref{def:cable}.
In \cite{HW23}, Hanselman and Watson discovered how the $(p,q)$-cable operation
transforms the immerse curves description of Heegaard-Floer homology described above.
If the curve $\oga_{K}$ is placed within a minimal box as in Fig. \ref{fig:pattern} then the $(3,4)$-cable $K_{3,4}$ of a knot $K$ 
leads to the immersed curve $\og_{K_{3,4}}$ in Figure \ref{fig:HWCabling}.

\begin{theorem}[\cite{HW23}] \label{thm:cabling}
Suppose that $\oga_K$ in $\overline{T}_K$ is the immersed curve associated to a knot $K$ in $S^3$. Then the procedure below produces the immersed curve $\oga_{K_{p,q}}$  associated to the $(p,q)$-cable of $K$\footnote{Compared to \cite{HW23}, all of our diagrams have been rotated by $-\pi/2$.}.
\begin{enumerate}
\item Create $p$ parallel copies of $\overline{T}_{K}$ stacked vertically. Place a copy of $\oga_K$ in each $\overline{T}_K$, so that 
they are scaled horizontally by a factor of $p$ and the next lower copy of the multicurve is $q$ units lower (in $A$-grading or height) than the previous copy
\item Connect the loose ends of the successive copies of the curve
\item Observe that the marked points (or disks) admit an isotopy which moves them uniformly vertically downward until they appear in order in the bottom copy of $\overline{T}_K$.  Using this isotopy to carry the multicurve from steps (1) and (2) produces the immersed curve  $\og_{K_{p,q}}$.
\end{enumerate}
\label{thm:cablecurve}
\end{theorem}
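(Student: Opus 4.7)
The plan is to realize the $(p,q)$-cable via a JSJ-style decomposition and then invoke the immersed-curves pairing theorem \ref{thm:ICpairing}. The complement $S^3 \setminus \nu K_{p,q}$ contains an essential torus isotopic to $\partial \nu K$ that separates it into two pieces: the knot complement $M_K = S^3 \setminus \nu K$ on the outside, and a cable space $C_{p,q}$ (the complement of the $(p,q)$-torus knot in an unknotted solid torus) on the inside. The outer boundary of $C_{p,q}$ is glued to $\partial \nu K$ using the Seifert framing, while the inner boundary becomes $\partial \nu K_{p,q}$. Thus the immersed curve $\oga_{K_{p,q}}$ ought to be obtained by applying the ``cable space bimodule'' to $\oga_K$.

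First, I would compute the bordered invariant of $C_{p,q}$ from a standard genus-$2$ bordered Heegaard diagram for the cable space (which admits a very concrete description as a thickened torus with a single $(p,q)$-torus-knot pattern running through it). This produces a type DA bimodule over the torus algebra. Next, I would apply the pairing theorem from \cite{LOT18b} (in its immersed-curves reformulation of \cite{HRW2}) to express $\oga_{K_{p,q}} = \og_K \boxtimes \widehat{\mathit{CFDA}}(C_{p,q})$ as a geometric operation on immersed curves in the covering space $\overline{T}_{K_{p,q}}$.

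The translation into the three-step algorithm then proceeds by tracking how the two coverings $\overline{T}_K$ and $\overline{T}_{K_{p,q}}$ are related through $\partial \nu K$. The meridian $\mu_{K_{p,q}}$ is isotopic in $\partial \nu K$ to the curve $p\mu + q\lambda$ on $\partial \nu K$; hence the cover $\overline{T}_{K_{p,q}}$, when pulled back to $\partial \nu K$, is a $p$-fold horizontal (meridional) cover of $\overline{T}_K$ in which successive fundamental domains are shifted vertically (longitudinally) by $q$. This is precisely the geometric content of step (1): $p$ horizontally-scaled copies of $\oga_K$ stacked with vertical offset $q$. Step (2), connecting loose ends, encodes how the lifts of $\oga_K$ link together along the preimage of $\partial \nu K$ in the $p$-fold cover, which is dictated by the connectivity of the bimodule for $C_{p,q}$. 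Step (3), the isotopy sliding marked points into the bottom copy of $\overline{T}_K$, is a renormalization that identifies the result with the standard presentation of $\overline{T}_{K_{p,q}}$ where the lifts of $w$ lie at heights $\tfrac{1}{2}+\mathbb{Z}$.

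The main obstacle will be verifying that the combinatorial recipe of the theorem exactly matches the box tensor product with $\widehat{\mathit{CFDA}}(C_{p,q})$, rather than producing only a regular-homotopy representative that is geometrically plausible. Concretely, one must check that bigons between $\oga_{K_{p,q}}$ and $\omu_{K_{p,q}}$ in the constructed picture are in bijection with the pseudo-holomorphic domains counted by the pairing theorem, and that the Alexander and Maslov gradings (cf.\ Equation \eqref{eq:grformula}) transform under cabling in the way the stacking-and-shifting prescribes. A careful local model of the cable space near its two boundary tori, together with homotopy invariance of $\og_K$ (Remark \ref{rem:geodesic}), should reduce this verification to a finite combinatorial check on a fundamental domain.
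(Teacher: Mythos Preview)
The paper does not prove this theorem; it is quoted verbatim from Hanselman--Watson \cite{HW23} and used as a black box. So there is no ``paper's own proof'' to compare against. Your sketch is a reasonable outline of the strategy actually carried out in \cite{HW23}: compute the type~DA bimodule of the cable space $C_{p,q}$ from an explicit bordered Heegaard diagram, tensor it with $\cfd(S^3\setminus\nu K)$, and then reinterpret the resulting type~D structure as an immersed multicurve via the dictionary of \cite{HRW1,HRW2}. Where your proposal is thin is exactly where the real work lies in \cite{HW23}: the bimodule computation is lengthy, and the passage from the algebraic tensor product to the clean geometric recipe (stack, shift, connect, slide) requires a careful case analysis rather than a single ``finite combinatorial check.'' But none of that is the present paper's burden, and for the purposes of this paper you should simply cite \cite{HW23} and move on.
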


\begin{notation}
Step (3) allows us to view the resulting multicurve in the cover
$\overline{T}_{K_{p,q}}$, but it also makes a mess, so in what follows it will be 
convenient to skip this step. We instead use a single zig-zag curve $\omu$ through all unisotoped lifts
of the marked point as in Figure \ref{fig:HWCabling}. In particular, when considering the $(r,rn+1)$-cable $K_{r,rn+1}$ of $K$, by a slight abuse of notation, we will use $\oga_{K_{r,rn+1}}$ to denote the immersed curve resulting from steps (1) and (2) of the above construction and $\overline{T}^r$ to denote the ambient surface.
\end{notation}

\begin{notation}{($\circ$-dot)}\label{circlenotation}
Moreover, when illustrating the curves associated to cablings, it is
often useful to simplify the pictures by combining $\bullet$ components above the first row into a single $\circ$-component, see Fig. \ref{fig:dilation} or  Fig. \ref{fig:StableUnknot}. In later sections, focus will placed on this first row because of its non-trivial contribution to the $\scfk(K)$ construction.
\end{notation}

\begin{figure}
  \begin{tikzpicture}
\draw[gray, very thin, dashed, step=1cm] (0,.5) grid[ystep=0] (4,2.5);
\foreach \x in {.5,1.5,2.5,3.5}
  \draw (\x,1.5) node {$\bullet$};

  \draw[thick,red] (2,.5) -- (2,1);
  \draw[thick,red] (2,2.5) -- (2,2);
\draw[thick,red] (1,1) rectangle (3,2);
\filldraw[red,fill opacity=0.125] (1,1) rectangle (3,2);
\draw[ultra thick,->] (0,2.5) -- (4,2.5);
\draw[ultra thick,->] (0,.5) -- (4,.5);
\draw [thick, decoration={brace,mirror}, decorate]  (1,-8pt) -- (3-0.05,-8pt) node [pos=0.5,anchor=south,yshift=-0.75cm]   {$2g$};

  \end{tikzpicture}
\caption{A cabling pattern. A minimal box is $2g(K)$ wide.}
\label{fig:pattern}
\end{figure}
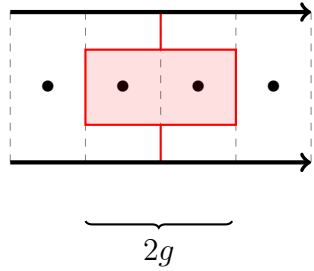

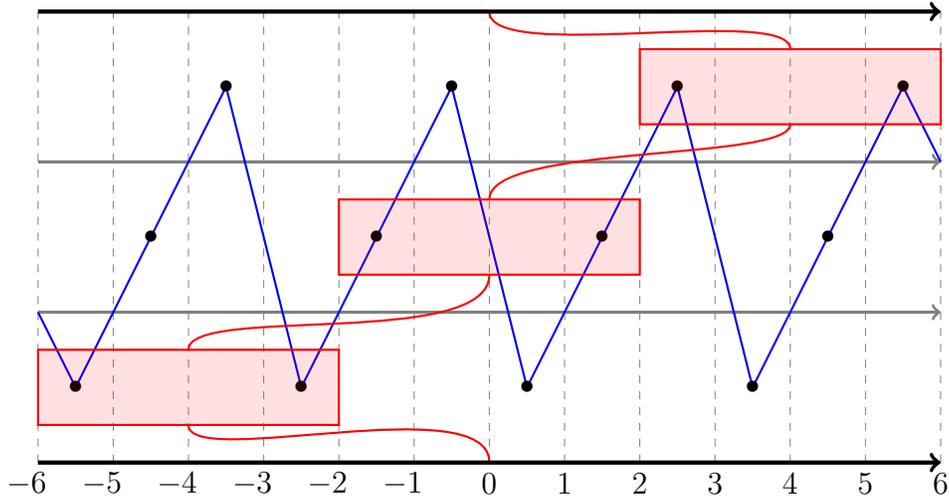
\begin{figure}[!h]
\centering
\begin{tikzpicture}
\draw[gray, very thin, dashed, step=1cm] (0,0) grid[ystep=0] (12,6);
\draw[very thick, gray, ->] (0,2) -- (12,2);
\draw[very thick, gray, ->] (0,4) -- (12,4);

\draw[thick,blue] (0,2) -- (.5,1) -- (1.5,3) -- (2.5,5) -- (3.5,1);
\draw[thick,blue] (3.5,1) -- (4.5,3) -- (5.5,5) -- (6.5,1);
\draw[thick,blue] (6.5,1) -- (7.5,3) -- (8.5,5) -- (9.5,1);
\draw[thick,blue] (9.5,1) -- (10.5,3) -- (11.5,5) -- (12,4);

\foreach \x in {.5,3.5,6.5,9.5}
  \draw (\x,1) node {$\bullet$};

\foreach \x in {1.5,4.5,7.5,10.5}
  \draw (\x,3) node {$\bullet$};

\foreach \x in {2.5,5.5,8.5,11.5}
  \draw (\x,5) node {$\bullet$};

\draw[thick,red] (0,.5) rectangle (4,1.5);
\draw[thick,red] (4,2.5) rectangle (8,3.5);
\draw[thick,red] (8,4.5) rectangle (12,5.5);
\filldraw[red,fill opacity=0.125] (0,.5) rectangle (4,1.5);
\filldraw[red,fill opacity=0.125] (4,2.5) rectangle (8,3.5);
\filldraw[red,fill opacity=0.125] (8,4.5) rectangle (12,5.5);

  \draw (-.15,-8pt) node {$-6$};
  \draw (.85,-8pt) node {$-5$};
  \draw (1.85,-8pt) node {$-4$};
  \draw (2.85,-8pt) node {$-3$};
  \draw (3.85,-8pt) node {$-2$};
  \draw (4.85,-8pt) node {$-1$};
  \draw (6,-8pt) node {$0$};
  \draw (7,-8pt) node {$1$};
  \draw (8,-8pt) node {$2$};
  \draw (9,-8pt) node {$3$};
  \draw (10,-8pt) node {$4$};
  \draw (11,-8pt) node {$5$};
  \draw (12,-8pt) node {$6$};

  \draw[thick,red] (2,.5) .. controls +(down:.6cm) and +(up:1cm) .. (6,0);
  \draw[thick,red] (2,1.5) .. controls +(up:.6cm) and +(down:1cm) .. (6,2.5);
  \draw[thick,red] (6,3.5) .. controls +(up:.75cm) and +(down:0.5cm) .. (10,4.5);
  \draw[thick,red] (10,5.5) .. controls +(up:.6cm) and +(down:.7cm) .. (6,6);
\draw[ultra thick,->] (0,6) -- (12,6);
\draw[ultra thick,->] (0,0) -- (12,0);

\end{tikzpicture}
\caption{The $T(3,4)$ cabling applied to the pattern in Fig. \ref{fig:pattern}.}

\label{fig:HWCabling}
\end{figure}

\newcommand{\m}{\iota}
\newcommand{\isoo}[1]{\vnp{#1}_{\cong}}
\newcommand{\car}{Hd}
\newcommand{\cdr}{Tl}
\newcommand{\Hd}{\car}
\newcommand{\Tl}{\cdr}

\section{Colored knot Floer homology}\label{altsec}

Theorem \ref{thm:stablelim} constructs a categorification of the
$S^r$-colored Alexander polynomial from Section \ref{ncoloredalexsec}. Corollary \ref{cor:structurecor} contains a
structure theorem for this filtered chain complex which will be used in
Section \ref{recsec}.

\newcommand{\subdiv}{subdiv}
\newcommand{\dilate}{\subdiv}

\begin{defn}{(subdivision)}\label{def:dilation}
Recall from Remark \ref{rmk:vertsimp} that by vertically-simplifying, every chain complex $C$ over a field such as $\FF_2$ can be decomposed into a sum
  $$C\cong  \left(\oplus_i \Lambda(x_i) \right) \oplus D,$$
where the differential of $C$ vanishes on $D$ and for each $\Lambda(x_i)$-term, $d(x_i) := 1_i$. If $C$ is bigraded then the \textit{length} of a $\Lambda(x_i)$-term is the Alexander or $q$-degree of $d$. With this in mind, we can introduce a chain complex  $\dilate(C)$ which fixes $D$ and replaces each $\Lambda(x_i)$-summand of length $A(d_i) = A(x_i) - A(1_i)$ by a direct sum of $A(d_i)$ terms of the form $\Lambda(x_{i,j})$ for $1\leq j \leq n_i$ each of length $1$. Alternatively, 
subdivision can be computed from the assignments below.
\begin{enumerate}
\item $\dilate(E\oplus F) := \dilate(E) \oplus \dilate(F)$
\item $\dilate(\FF_2) := \FF_2$
\item $\dilate((\Lambda(x_i),d)) := \oplus_{j=1}^{A(d)} (\Lambda(x_{i,j}),d_j)$ where $A(d_j) = 1$
\end{enumerate}  

If $(f_{x_i}, f_{1_i}) : \Lambda(x_i) \to \Lambda(x_i')$ are the components of a chain map $f$ then the assignment $\dilate(f_{x_i},f_{1_i}) := (f_{x_i},f_{1_i}, f_{x_i},f_{1_i},\ldots, f_{x_i},f_{1_i})$ extends $\dilate$ to a functor. This statement wont be used here, so further discussion is omitted.
  \end{defn}

The algebraic notion introduced above arises from an observation about the cabling procedure which is illustrated in Fig. \ref{fig:dilation} below.

\begin{figure}[!h]
\centering
\begin{tikzpicture}

\draw[thick, -,blue] (0,3) -- (3,3);

  \draw (1,3) node {$\bullet$};
  \draw (2,3) node {$\bullet$};

  \draw[thick,red] (0,2.5) .. controls +(right:.6cm) and +(left:.7cm) .. (1,3.5);
  \draw[thick,red,-] (1,3.5) -- (2,3.5);
  \draw[thick,red] (2,3.5) .. controls +(right:.7cm) and +(left:.6cm) .. (3,2.5);

\draw[very thick, ->] (3.5,3) -- (4, 3);

\draw[thick,blue, -] (4.5,3) -- (5,3.75) -- (5.5,3) -- (6,3.75) -- (6.5,3) -- (7, 3.75) -- (7.5,3);
  \draw (5,3.75) node {$\circ$};
  \draw (5.5,3) node {$\bullet$};
  \draw (6,3.75) node {$\circ$};
  \draw (6.5,3) node {$\bullet$};
  \draw (7,3.75) node {$\circ$};

  \draw[thick,red] (4.5,2.5) .. controls +(right:.6cm) and +(left:.7cm) .. (5.5,3.5);
  \draw[thick,red,-] (5.5,3.5) -- (6.5,3.5);
  \draw[thick,red] (6.5,3.5) .. controls +(right:.7cm) and +(left:.6cm) .. (7.5,2.5);

\end{tikzpicture}
\caption{Subdividing a bigon when $A(d)=2$. For $\circ$-dot notation, see \ref{circlenotation}.}
\label{fig:dilation}
\end{figure}
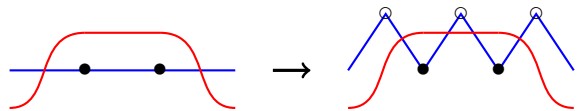

\begin{example}\label{example:dilate}
  Let $\Delta_m := \Lambda(\xi)$ where $d(\xi) = 1$ and $\vnp{\xi}_A = m$ be the $2$-term chain complex of Alexander degree $m$. Then the chain complex $D_m := \dilate(\Delta_m)$ consists of $m$, $A$-degree $1$, $2$-term complexes:
  $$D_m = \oplus_{i=1}^m \Lambda(\xi_i).$$

Moreover, the relation $D_m \cong D_{m-1} \oplus \Lambda(\xi_m)$ determines embeddings $\iota_m : D_{m-1} \hookrightarrow D_m$ and the limit of the sequence 
$$D_1 \xto{\iota_1} D_2 \xto{\iota_2} D_3 \to \cdots$$
can be identified with the unknot homology $\scfk(U)$ in Example \ref{exampleunknotcomplex} (up to grading considerations which will depend upon $r$).
  \end{example}

\newcommand{\arclength}{$TODO$ }

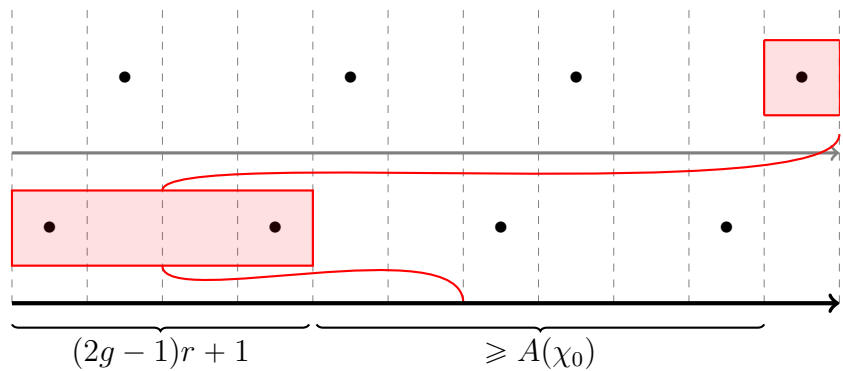
\begin{figure}[!h]
\centering
\begin{tikzpicture}
\draw[gray, very thin, dashed, step=1cm] (0,0) grid[ystep=0] (11,4);

\draw[very thick, gray, ->] (0,2) -- (11,2);

\foreach \x in {.5,3.5,6.5,9.5}
  \draw (\x,1) node {$\bullet$};

\foreach \x in {1.5,4.5,7.5,10.5}
  \draw (\x,3) node {$\bullet$};

\draw[thick,red] (10,2.5) -- (11,2.5);
\draw[thick,red] (10,3.5) -- (11,3.5);
\draw[thick,red] (10,2.5) -- (10, 3.5);

\draw[thick,red] (0,.5) rectangle (4,1.5);
\filldraw[red,fill opacity=0.125] (0,.5) rectangle (4,1.5);
\filldraw[red,fill opacity=0.125] (10,2.5) rectangle (11,3.5);

\draw [thick, decoration={brace,mirror}, decorate]  (0,-8pt) -- (4-0.05,-8pt) node [pos=0.5,anchor=south,yshift=-0.75cm]   {$(2g-1)r+1$};
\draw [thick, decoration={brace,mirror}, decorate]  (4.05,-8pt) -- (10,-8pt) node [pos=0.5,anchor=south,yshift=-0.75cm]   {$\geq A(\dgen)$};

  \draw[thick,red] (2,.5) .. controls +(down:.6cm) and +(up:1cm) .. (6,0);
  \draw[thick,red] (2,1.5) .. controls +(up:.6cm) and +(down:1cm) .. (11,2.25);
\draw[ultra thick,->] (0,0) -- (11,0);
\end{tikzpicture}
\caption{A schematic for Lemma \ref{lem:cablingstr}. The head consists of contributions from the lowest box. The tail begins on the non-compact curve at $\dgen$ and continues until reaching the penultimate box.}
\label{fig:cablingstr}
\end{figure}

\newcommand{\Leftbound}{\BC{???}}
\newcommand{\Rightbound}{\BC{???}}

\begin{defn}\label{onecoloreddef}
The $1$-colored knot Floer complex is defined to be the usual knot Floer complex, $S^1\widehat{CFK}(K) \defeq \hC(K)$, from Eqn. 
\eqref{eq:hatc}. 
Much like $r$-colored knot Floer complexes introduced later, the $1$-colored complex has a ``head-tail'' decomposition
$$S^1\widehat{CFK}(K) \cong \Hd(K) \opp \Tl(K)$$
compatible with the vertical-simplification Notation \ref{notation:vertsimp}, in the sense that
$$\Hd(K) := \opp_{i\in I} \Lambda(x_i) \conj{ and }  \Tl(K) := \FF_2\inp{\dgen}.$$
  \end{defn}

\begin{lemma}\label{lem:cablingstr}
Let $g$ be the genus of the knot $K$ and $K_{r,rn+1} \defeq C_{r,rn+1}(K)$ be the $(r,rn+1)$-cable of $K$. When $r>1$ and $n > 2g-1$  the chain complex 
$\hC(K_{r,rn+1})$ admits a ``head-tail'' decomposition, an ungraded isomorphism of the form:
\begin{align*}
\tau_A^{\leq rn+1}\hC(K_{r,rn+1}) & \cong \dilate(\Hd(K)) \oplus \subdiv(\Delta_m)
\end{align*}
where $m=n-g-\tau(K)-1$, $\tau_A^{\leq n}C := \{ x\in C : A(x) \leq n \}$, $\Hd(K)$ appears in Def. \ref{onecoloreddef} and $\Delta_m$ in Ex. \ref{example:dilate}.
\end{lemma}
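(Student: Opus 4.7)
The plan is to apply the Hanselman--Watson cabling procedure (Theorem \ref{thm:cabling}) to the immersed curve $\og_K$, and then decompose the resulting intersection complex $\hC(K_{r,rn+1})$ according to the vertical simplification of $\hC(K)$.

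\textbf{Geometric set-up.} First I would place $r$ parallel copies of $\og_K$ in $\overline{T}^r$, stacked vertically with consecutive copies offset by $rn+1$ in the Alexander direction, scaled horizontally by $r$, with loose ends connected, as prescribed by Theorem \ref{thm:cabling}. Since each copy of $\og_K$ is contained in a vertical strip of height $2g$, the hypothesis $n > 2g-1$ guarantees that consecutive strips are separated by a gap of size $rn+1-2g > 0$, so the strips are pairwise disjoint. In particular, the truncation $\tau_A^{\leq rn+1}$ retains exactly (i)~the entire bottom copy of $\og_K$, and (ii)~the ascending arc of the essential component $\og_0$ that climbs from the top of the bottom copy into the next copy above, cut off at height $rn+1$. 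This is the schematic of Figure \ref{fig:cablingstr}.

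\textbf{Translating the vertical simplification.} Next I would invoke Notation \ref{notation:vertsimp} and Corollary \ref{cor:structurecor} to realize $\hC(K)\cong \FF_2\inp{\dgen}\oplus \bigoplus_i \Lambda(x_i)$ geometrically: each $\Lambda(x_i)$ corresponds to a short horizontal arc in $\og_K$ connecting two vertical segments at heights $A(x_i)$ and $A(1_i)$, while $\dgen$ is the vertical segment of $\og_0$ attached to the top edge of $\overline{T}_K$. Under the cabling, each of these pieces is transported into the bottom strip of $\overline{T}^r$, while the essential arc through $\dgen$ is the one that rises into the gap above.

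\textbf{Analysis of the head.} For each arc realising $\Lambda(x_i)$, the horizontal scaling by $r$ together with the zig-zag form of $\omu$ in the cable creates new transverse intersections along the arc. Tracking how many of the marked-point lifts are threaded by the arc, one sees that the original bigon of Alexander length $A(d_i)=A(x_i)-A(1_i)$ is replaced by $A(d_i)$ consecutive length-$1$ bigons, which is exactly $\dilate(\Lambda(x_i))$ in the sense of Definition \ref{def:dilation}. Summing over $i\in I$ yields the $\dilate(\Hd(K))$ summand.

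\textbf{Analysis of the tail.} The essential arc leaving $\dgen$ at height $\tau(K)$ climbs to the top of the bottom strip (height $g$) and then crosses the gap of size $rn+1-2g$ to reach the next copy. Along this ascent, each zig-zag of $\omu$ produces one new transverse intersection. Restricting to $A\le rn+1$ retains $m+1$ such intersections, linked by $m$ length-$1$ bigons, i.e.\ $\subdiv(\Delta_m)$. A direct count of crossings, taking into account the horizontal $r$-scaling, the height $\tau(K)$ of $\dgen$, and the cutoff $rn+1$, then yields $m=n-g-\tau(K)-1$. Finally, because the head arcs all lie in the bottom strip and the ascending arc lies strictly above it with no common generators or bigons, the decomposition is literally a direct sum.

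\textbf{Main obstacle.} The principal difficulty will be the last count: precisely identifying how many zig-zag intersections of $\omu$ along the ascending essential arc survive the Alexander truncation, and verifying that this number is $m+1$ with $m=n-g-\tau(K)-1$. This requires careful bookkeeping of the Seifert-framed meridian as it zig-zags through the stacked copies and of the position of $\dgen$ relative to the strip boundary, but once it is carried out, the separation of head and tail into disjoint strata makes the direct-sum structure automatic.
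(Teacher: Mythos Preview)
Your proposal is essentially the same argument as the paper's: apply the Hanselman--Watson cabling algorithm, use the hypothesis $n>2g-1$ to separate the bottom box from the rest, identify the head as the subdivided $\Hd(K)$ via the zig-zag $\omu$, and identify the tail as the subdivided arc from $\dgen$ to the penultimate box. One small correction in your tail count: $\subdiv(\Delta_m)=\bigoplus_{i=1}^m\Lambda(\xi_i)$ has $2m$ generators, not $m+1$; the ascending arc meets the zig-zag $\omu$ in \emph{pairs} (just as in the head analysis), yielding $m$ disjoint length-$1$ bigons rather than a single chain of $m+1$ points.
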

\begin{proof}

The basic setting is as follows. The curve invariant $\og_{K_{r,rn+1}}$ in $\overline{T}^r_M$ associated to $(r,rn+1)$-cable $C_{r,rn+1}(K)$ of $K$ is obtained from the curve invariant $\og_K$ of $K$ using the algorithm described in Theorem \ref{thm:cablecurve}. 
By Theorem \ref{thm:IChfk}, 
$$\cfk(C_{r,rn+1}(K)) \simeq \hC(K_{r,rn+1}) = CF(\og_{K_{r,rn+1}},\omu)\ott_{\FF_2[U]} \FF_2.$$ 
implies that the knot Floer complex is the intersection Floer
complex.
In keeping with Rmk. \ref{rem:geodesic}, we choose a
length-minimizing representative of $\og_K$ and a length-minimizing representative of the corresponding cabling
invariant $\og_{K_{r,rn+1}}$. With these choices, the generators of  $\hC(K_{r,rn+1})$ form a reduced basis with
  respect to the vertical differential $\partial^{v}$. (Note by assuming a minimal representative of $\og_{K_{r,rn+1}}$ the boxes in the discussion of the cabling algorithm may not be so box-like, but the initial choice of a curve in a box suffices to identify and partition generators in the reduced complex.)

The key structural observation about the cabling algorithm (Thm. \ref{thm:cabling}) is that when the $(r,rn+1)$-cabling parameter $n > 2g-1$ is sufficiently large, two things are true:
\begin{enumerate}
\item the two copies of $\og_K$ in the penultimate and final rows occupy disjoint heights. 
\item the arc that connects these two copies of $\og_K$ is horizontal and its length is proportionate to $n$
\end{enumerate}
This observation is illustrated in Fig. \ref{fig:cablingstr}. The head is determined by the copy of $\og_K$ in the bottom row. The tail formed by the intersection points on the curve between the generator $\dgen$ and the start of the second copy of $\og_K$.

{\em Head identification:} 
Let $\og_{rK}$ be the ``$r$-scaled copy''
of $\og_K$ contained within the box in the bottom row of the cabling
construction. Since $\og_K$ is bound within the region of Alexander length $2g$, the curve $\og_{rK}$ is bound inside the lowest 
$(2g-1)r+1$ terms. Set
$$\hC_r(K) \defeq H_*(\textit{CF}(\og_{rK},\overline{\mu})).$$
Since the boxes are topologically identical we need only
analyze how changing $\omu$ to the zig-zag curve (as in Fig. \ref{fig:HWCabling}) changes the chain complex.
We assume that the chain complexes $\hC(K)$ and $\hC_r(K)$ have been vertically simplified and follow the conventions of \S\ref{sec:vertsimp}. 
Following Cor. \ref{cor:structurecor} (3), the vertical generator $\dgen$ is the distinguished vertical segment which connects to the box in the penultimate row. The remaining vertical segments correspond to terms $\Lambda(x_i)$ with $d(x_i)=1_i$ each of which corresponds to a bigon of length $n_i$.  To each such term there is a bigon $B$ appearing as the left-hand side of Fig. \ref{fig:dilation}. Associated to each such region is a subdivided region appearing as the right-handside of Fig. \ref{fig:dilation}.

Let us explain in detail how a bigon is subdivided. Suppose generators $x_i$ and $1_i$ from $\hC(K)$ are connected by a length $n_i$ bigon. All of the bigons are always $\og_{rK}$ on the right and $\omu^r$ on the left.
In $\hC(K_{r,rn+1})$, the arc connecting the top endpoints of their associated curve segments makes $2(n_i-1)$ additional intersections with $\overline{\mu}^r$. Introduce new generators $x_{i,j}$ and $1_{i,j}$. Set $x_{i,n_i} \defeq x_i$ and $1_{i,1} \defeq 1_i$ so that 
\begin{enumerate}
\item the bigon $B$ from $x_{i,j}$ to $1_{i,j}$, covers a single $z$-marked point ($n_z(B)=1$) for $1 \leq j \leq n_i$,
\item the bigon $B$ from $x_{i,j}$ to $1_{i,j+1}$, covers $r-1$ $w$-marked points ($n_w(B)=r-1$) for $1 \leq j \leq n_i -1$,
\end{enumerate}

with $\overline{T}^r$ understood to contain pairs of $z$ and $w$ marked points (as in the discussion following Theorem \ref{thm:ICpairing}). Labeled in this way, we collect these generators according to increasing Alexander grading by pairs admitting a bigon covering a single $z$ marked point:
\[
1_{i,1} \from x_{i,1}, 1_{i,2}\from x_{i,2}, \dots, 1_{i,j}\from  x_{i,j}, \dots, 1_{i,n_i}\from x_{i,n_i}
\]
Each pair $1_{i,j}\from x_{i,j}$ is connected by a bigon pictured on the left in Figure \ref{fig:commonbigons}. This collection agrees with $\subdiv(\Lambda(x_i))$ from Def. \ref{def:dilation}.

The above only specifies an embedding of $\dilate(\Hd(K)) \hookrightarrow \tau^{\leq rn+1}_A\hC(K_{r,rn+1})$. There can be no additional generators besides those accounting for the tail below because, after performing step (3) of the cabling algorithm, the pairs of curves $(\omu^r, \og_{rK})$ and $(\omu, \og_K)$ can be assumed to be equal except for the subdivision occuring due to the adding of basepoints.

{\em Tail identification:}
The remaining generators 
 are due to the arc connecting the distinguished generator $\dgen$ to the copy of $\og_{rK}$ in the penultimate row, the arc 
begins outside of the lowest box in Fig. \ref{fig:cablingstr} and continues until it reaches the start of the second highest box in Fig. \ref{fig:cablingstr}.
The $q$-grading of the distinguished generator is given by $A(\dgen) = r\tau(K)$, after this the arc forms pairs of intersections identical to those in the discussion of the head above until it reaches the height of the left edge of the bounding box in the the penultimate row. The particulars of how the arc connect to the second copy of $\og_K$ do not change our estimate. This collection of generators introduced by the arc can be identified with the dilation $\subdiv(\Delta_m)$ where $m=n-g-\tau(K)-1$.

  \end{proof}

In order to identify the limit of $(r,rn+1)$-cablings of $K$ as $n\to \infty$  in Theorem \ref{thm:stablelim}, we next recall Rozansky's convergence criteria.

Suppose that $\mathcal{\eA}$ is an additive category and $K^+(\eA)$ is the homotopy category of chain complexes over $\eA$ which are unbounded in positive degree. If $\m : A\to B$ is a map then the {\em isomorphism order} is defined to be
$$\isoo{\m} := \max \{ n : \tau^{\leq n}(\m) \textnormal{ is an isomorphism } \},$$
where $\tau^{\leq n} : K^+(\eA) \to K^{\leq n}(\eA)$ is truncation above degree $n$. 

\begin{lemma}{(\cite[Theorem 3.4]{roz})}\label{lem:limit}
  Suppose that $\mathcal{\eA}$ is an additive category and $K^+(\eA)$ is the associated homotopy category of chain complexes. Then a directed system has a limit 
if and only if the isomorphism orders diverge.
  $$\lim_{\to} \left(C_1 \xto{\m_1} C_2 \xto{\m_2} C_3 \xto{\m_3} \cdots\right) \conj{ $\Leftrightarrow$ }  \lim_{i\to \infty} \isoo{\m_i} = \infty$$
  \end{lemma}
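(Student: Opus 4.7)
The plan is to verify the statement by constructing the telescope (mapping-cylinder) model of the directed system as a candidate for the limit, and analyzing its behavior under the truncation functors $\tau^{\leq n}$. Both directions then reduce to the observation that if a map in $K^+(\eA)$ becomes an isomorphism after truncation at every sufficiently high degree, it is an isomorphism.

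First I would treat the sufficiency direction. Assuming $\isoo{\m_i} \to \infty$, define
\[
T \;:=\; \mathrm{Tel}(\{C_i,\m_i\}) \;=\; \mathrm{Cone}\!\left(\bigoplus_{i\geq 1} C_i \xrightarrow{\;1-s\;} \bigoplus_{i\geq 1} C_i\right)[-1],
\]
where $s$ restricted to the $i$-th summand is $\m_i : C_i \to C_{i+1}$. This comes equipped with natural structure maps $\iota_i : C_i \to T$ satisfying $\iota_{i+1}\circ\m_i \simeq \iota_i$. For a fixed degree $n$, choose $i_0$ so large that $\isoo{\m_j} > n$ whenever $j\geq i_0$. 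A short cofiber sequence / telescope collapse argument (the tail $\bigoplus_{j\geq i_0}\mathrm{Cone}(\m_j)$ is acyclic in degrees $\leq n$ by hypothesis) shows that
\[
\tau^{\leq n}(\iota_{i_0}) \;\co\; \tau^{\leq n}(C_{i_0}) \;\to\; \tau^{\leq n}(T)
\]
is an isomorphism in the relevant truncated homotopy category. Since this holds for every $n$, the object $T$ together with the maps $\iota_i$ satisfies the universal property defining a colimit in $K^+(\eA)$ compatibly with truncation.

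For necessity, suppose the directed system admits a limit $C$ with structure maps $\iota_i : C_i \to C$, meaning that for each $n$ there exists $N(n)$ with $\tau^{\leq n}(\iota_i)$ an isomorphism whenever $i \geq N(n)$. The compatibility relation $\iota_i \simeq \iota_{i+1}\circ\m_i$ becomes, after truncation, $\tau^{\leq n}(\iota_i) \simeq \tau^{\leq n}(\iota_{i+1}) \circ \tau^{\leq n}(\m_i)$. Whenever both $\tau^{\leq n}(\iota_i)$ and $\tau^{\leq n}(\iota_{i+1})$ are isomorphisms, the two-out-of-three property forces $\tau^{\leq n}(\m_i)$ to be an isomorphism too. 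Consequently $\isoo{\m_i}\geq n$ for $i\geq N(n)$, and letting $n\to\infty$ shows $\isoo{\m_i}\to\infty$.

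I expect the principal obstacle to be the sufficiency argument: specifically the telescope collapse showing that $\tau^{\leq n}$ of the ``tail'' summands vanishes up to homotopy. Working in $K^+(\eA)$ rather than at the chain level, one must argue coherently up to homotopy, and the filtration of the telescope by the partial telescopes $T_k = \mathrm{Tel}(C_1 \to \cdots \to C_k)$ gives the needed control: the transition maps $T_k \to T_{k+1}$ are isomorphisms after $\tau^{\leq \isoo{\m_k}}$, so the inverse system of truncations stabilizes degree by degree. With this in place, the universal property of $T$ reduces to extending a compatible family $\{C_i \to D\}$ across the stable range, which is formal from the cofibration data.
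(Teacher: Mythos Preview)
The paper does not give its own proof of this lemma; it is simply cited as \cite[Theorem 3.4]{roz} and used as a black box. So there is nothing in the paper to compare your argument against.

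That said, a brief comment on your approach. Your necessity argument via two-out-of-three is standard and fine once one fixes the meaning of ``limit'' (which the paper leaves implicit). Your sufficiency argument via the mapping telescope is natural, but note a potential issue: forming $\bigoplus_{i\geq 1} C_i$ presupposes that countable coproducts exist in $\eA$ (or at least degreewise in $\Ch(\eA)$), which is not part of the hypothesis ``$\eA$ additive.'' Rozansky's original argument is more elementary: since $\tau^{\leq n}(\m_i)$ is eventually an isomorphism, one can define the limit object $C$ directly by declaring $\tau^{\leq n}(C) := \tau^{\leq n}(C_i)$ for $i$ large (depending on $n$), with the transition isomorphisms providing the gluing data. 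This avoids infinite sums entirely and is closer to how the lemma is actually used in the paper (see the proof of Corollary~\ref{thm:stablelim}, where the limit is identified term-by-term). Your telescope would work in the examples at hand because $\eA$ is a category of vector spaces, but it is not the argument Rozansky gives for the general statement.
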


Recall the element $\bgen\in\hC(K)$ from Notation \ref{notation:vertsimp} (4) is the element of lowest Alexander grading and least Maslov grading. For any knot $K$, the chain complex $\vnp{\bgen}^{-1}\cfk(K) = t^{-M(\bgen)} q^{-A(\bgen)}\hC(K)$ has been shifted so that the generator $\vnp{\bgen}^{-1}\bgen$ lies in $(t,q)$-degree $(0,0)$. The corollary uses the lemma to construct chain complexes $\scfk(K)$.

\begin{cor}\label{thm:stablelim}
For each knot $K$ in $S^3$, there is a limit of filtered chain complexes 
$$\scfk(K) := \lim_{n\to \infty} \vnp{b_{r,n}}^{-1} \hC(K_{r, rn+1})$$
where $K_{r,rn+1} = C_{r, rn+1}(K)$ is the $(r,rn+1)$-cable of $K$ and $b_{r,n}\in \hC(K_{r,rn+1})$ is the bottom generator.  The homology of this chain complex is an invariant of the knot $K$.
\label{thm:StableLimit}
\end{cor}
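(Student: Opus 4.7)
The plan is to apply Rozansky's convergence criterion (Lemma \ref{lem:limit}) to the directed system $\{C_n\}_{n > 2g-1}$ where $C_n := \vnp{b_{r,n}}^{-1}\hC(K_{r,rn+1})$, working in the homotopy category of filtered chain complexes of $\FF_2$-vector spaces which are unbounded above. Two things must be supplied: transition maps $\m_n : C_n \to C_{n+1}$, and a proof that the isomorphism orders $\isoo{\m_n}$ diverge as $n \to \infty$.

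For the maps, I would exploit Lemma \ref{lem:cablingstr}: for $n > 2g-1$,
\[
\tau_A^{\leq rn+1}\hC(K_{r,rn+1}) \cong \dilate(\Hd(K)) \oplus \subdiv(\Delta_{m(n)}),
\]
with $m(n)=n-g-\tau(K)-1$. After normalizing by $\vnp{b_{r,n}}^{-1}$, the bottom generator of $\subdiv(\Delta_{m(n)})$ sits at bidegree $(0,0)$, the $n$-independent head summand $\dilate(\Hd(K))$ sits above the tail, and the tail fills the intermediate Alexander gradings. Passing from $n$ to $n+1$ lengthens the tail upward by an additional length-one summand $\Lambda(\xi_{m(n)+1})$, precisely as in the chain of inclusions $D_m \hookrightarrow D_{m+1}$ from Example \ref{example:dilate}. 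Combined with the identity on $\dilate(\Hd(K))$, this yields a filtered inclusion $\m_n : C_n \hookrightarrow C_{n+1}$ whose restriction to $\tau_A^{\leq rn+1}$ is an isomorphism onto the corresponding truncation of $C_{n+1}$. Consequently $\isoo{\m_n} \geq rn+1 \to \infty$, and Lemma \ref{lem:limit} produces the colimit $\scfk(K) := \lim_\to C_n$.

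The main obstacle is that Lemma \ref{lem:cablingstr} is phrased only as an ungraded isomorphism on truncations, whereas the colimit must retain the full Alexander filtration and must be compatible across the directed system. I would circumvent this by working directly with the immersed curves: Theorem \ref{thm:cablecurve} shows that passing from the $(r,rn+1)$-cable to the $(r,r(n+1)+1)$-cable inserts one additional zig-zag pair into the portion of $\og_{K_{r,rn+1}}$ lying in the tail region of Fig.~\ref{fig:cablingstr}, while leaving the intersection pattern with $\omu$ in the head region and below the insertion point rigidly unchanged. This makes $\m_n$ canonical on generators and manifestly filtered. Invariance of the homology is then immediate: each cable $K_{r,rn+1}$ is a knot invariant of $K$, the complex $\hC(K_{r,rn+1})$ is a filtered invariant of the cable by Theorem \ref{thm:IChfk}, the bottom-generator shift is canonical, and the transition maps are natural up to filtered homotopy equivalence, so their colimit depends only on $K$.
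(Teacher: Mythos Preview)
Your approach is essentially the paper's: apply Lemma~\ref{lem:cablingstr} to identify head and tail, build the transition maps as identity on the head and the inclusions $D_m\hookrightarrow D_{m+1}$ of Example~\ref{example:dilate} on the tail, upgrade to graded/filtered maps by appealing directly to the immersed-curve picture, and invoke Lemma~\ref{lem:limit}. The paper makes one choice you leave implicit: it takes the directed system to be the \emph{truncations} $C_{r,n}:=\tau_A^{\leq rn+1}\vnp{b_{r,n}}^{-1}\hC(K_{r,rn+1})$ rather than the full shifted complexes, so that the transition maps are honestly defined everywhere---your $\m_n$ is only specified on $\tau_A^{\leq rn+1}C_n$, and the full $C_n$ carries additional generators from the higher boxes of the cabling pattern that your description does not address (this is harmless for the colimit in $K^+$, but it is a gap in the definition as written).
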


\begin{proof}
  By virtue of Lemma \ref{lem:cablingstr}, there are ungraded isomorphisms
$$\tau_A^{\leq rn+1}\hC(K_{r,rn+1})  \cong \dilate(\Hd(K)) \oplus \subdiv(\Delta_m)$$
  Setting $C_{r,n} \defeq \tau_A^{\leq rn+1}\vnp{b_{r,n}}^{-1}\hC(K_{r,rn+1})$.
Under these identifications we can define inclusion maps
  $\iota_n : C_{r,n} \hookrightarrow C_{r,n+1}.$
  The maps $\iota_n$ are identity on $\dilate(\Hd(K))$ and agree with the inclusions $\iota_n$ appearing in Ex. \ref{example:dilate} above  on tail terms. The maps $\iota_n$ can be upgraded to graded maps because the curves and associated grading data are identical in the regions under consideration. The isomorphism order diverges because the reduced complexes have the structure Eqn. \eqref{eq:vertsimp} and the maps $\iota_n$ are identity maps. We set
  $$\scfk(K) \defeq \lim_{n\to\infty} C_{r,n}$$

The homology of $S^r\cfk(K)$ is an invariant of $K$ because $\og_K$ and the absolute Maslov and Alexander gradings of the generators $b_{r,n}$ involved in the grading shifts above are invariants of $K$. Additionally, the graded Euler characteristic of $S^r$-colored knot Floer homology matches the $S^r$-colored Alexander polynomial of $K$ because the Euler characteristic commutes with limits (matching Definition \ref{coloredalexdef}).
\end{proof}

\begin{figure}[!h]

\centering

\begin{tikzpicture}[scale=1.1]

\draw[gray, very thin, dashed, step=1cm] (5,.5) grid[ystep=0] (9,2.5);
\draw[thick,blue] (5,1.5) -- (9,1.5);
  \draw (5,0.125) node {$-2$};
  \draw (6,0.125) node {$-1$};
  \draw (7,0.125) node {$0$};
  \draw (8,0.125) node {$1$};
  \draw (9,0.125) node {$2$};


  \draw[thick,red] (7,.5)
  .. controls +(up:.5cm) and +(down:0.3cm) .. (6,1.5)
  .. controls +(up:.3cm) and +(left:0.3cm) .. (6.5,2)
 .. controls +(right:.3cm) and +(left:0.3cm) .. (7.5,1)
 .. controls +(right:.3cm) and +(down:0.3cm) .. (8,1.5)
 .. controls +(up:.3cm) and +(down:.5cm) .. (7,2.5);

\filldraw[red,fill opacity=0.125] (6,1) rectangle (8,2);
\draw[thin,red] (6,1) rectangle (8,2);

\foreach \x in {5.5,6.5,7.5,8.5}
  \draw (\x,1.5) node {$\bullet$};
\draw[ultra thick,->] (5,2.5) -- (9,2.5);
\draw[ultra thick,->] (5,.5) -- (9,.5);

\draw (4.8,1.5) node {$\omu$};
\draw (7,2.75) node {$\og$};

  \draw (8, 1.5) node [mark=diamond] {};
 \draw (8.25, 1.5) node {$\dgen$};
  \draw (6-0.15, 1.5) node {$\bgen$};

  \end{tikzpicture}
\caption{The right-handed trefoil is $\og_K$ placed into a box with distinguished generators. Each $\bullet$ contains a $z$ $\epsilon$-above and a $w$ $\epsilon$-below.}

\begin{tikzpicture}
\draw[gray, very thin, dashed, step=1cm] (0,0) grid[ystep=0] (12,6);
\draw[very thick, gray, ->] (0,2) -- (12,2);
\draw[very thick, gray, ->] (0,4) -- (12,4);

\draw[thick,blue] (0,2) -- (.5,1) -- (1.5,3) -- (2.5,5) -- (3.5,1);
\draw[thick,blue] (3.5,1) -- (4.5,3) -- (5.5,5) -- (6.5,1);
\draw[thick,blue] (6.5,1) -- (7.5,3) -- (8.5,5) -- (9.5,1);
\draw[thick,blue] (9.5,1) -- (10.5,3) -- (11.5,5) -- (12,4);

\foreach \x in {.5,3.5,6.5,9.5}
  \draw (\x,1) node {$\bullet$};

\foreach \x in {1.5,4.5,7.5,10.5}
  \draw (\x,3) node {$\bullet$};

\foreach \x in {2.5,5.5,8.5,11.5}
  \draw (\x,5) node {$\bullet$};

\draw[thick,red] (0,.5) rectangle (4,1.5);
\filldraw[red,fill opacity=0.125] (0,.5) rectangle (4,1.5);

  \draw[thick,red] (10,.5)
  .. controls +(left:.5cm) and +(down:1cm) .. (0,1)
  .. controls +(up:.3cm) and +(left:0.3cm) .. (.5,1.4)
 .. controls +(right:.3cm) and +(left:1cm) .. (3.5,.6)
.. controls +(right:1cm) and +(left:1cm) .. (10,2.5);

  \draw (0,-8pt) node {$0$};
  \draw (1,-8pt) node {$1$};
  \draw (2,-8pt) node {$2$};
  \draw (3,-8pt) node {$3$};
  \draw (4,-8pt) node {$4$};
  \draw (5,-8pt) node {$5$};
  \draw (6,-8pt) node {$6$};
  \draw (7,-8pt) node {$7$};
  \draw (8,-8pt) node {$\cdots$};

  \draw[thick,red] (10,.5) -- (12,.5);
  \draw[thick,red] (10,2.5) -- (12,2.5);
\draw[ultra thick,->] (0,6) -- (12,6);
\draw[ultra thick,->] (0,0) -- (12,0);

  \draw (6.1, 1.6) node {$\dgen$};
  \draw (.2, 1.1) node {$\bgen$};

\end{tikzpicture}
\caption{The limit of the right-handed trefoil cabling process applied to a pattern Fig. \ref{fig:pattern} after removing extraneous bigons. In the first figure the generators satisfy $A(\dgen)-A(\bgen) = 2$, but here this difference $A'(\dgen)-A'(\bgen)=2*3$ is scaled by a factor of $r=3$.}

\label{fig:RScaling}
\end{figure}

\begin{proposition}\label{prop:grading}
There is an (ungraded) isomorphism of vector spaces
\[
\psi^r: S^r\cfk(K) \rightarrow S^{r+1}\cfk(K).
\]
The $(t,q)$-degrees of $\psi^r$ are affine linear functions of $r$.
\label{prop:cableincrement}
\end{proposition}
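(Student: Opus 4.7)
The plan is to construct $\psi^r$ via the head--tail decomposition from Lemma \ref{lem:cablingstr}, which gives an ungraded isomorphism
\[
\tau_A^{\leq rn+1}\hC(K_{r,rn+1}) \cong \dilate(\Hd(K)) \oplus \subdiv(\Delta_{n-g-\tau(K)-1})
\]
for sufficiently large $n$. The head summand $\dilate(\Hd(K))$ is determined only by the vertical simplification of $\hC(K)$, so as an ungraded vector space it does not depend on $r$. The subdivided tail $\subdiv(\Delta_m)$ stabilizes (cf.\ Example \ref{example:dilate}) to a fixed limit $T_\infty$ that is also $r$-independent. Passing to the limit $n\to\infty$ via Corollary \ref{thm:stablelim}, we obtain $S^r\cfk(K) \cong \dilate(\Hd(K)) \oplus T_\infty$ as an ungraded vector space for every $r \geq 1$. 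I define $\psi^r$ to be the tautological identification of these two summands between $S^r\cfk(K)$ and $S^{r+1}\cfk(K)$.

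To verify that the $(t,q)$-shifts of $\psi^r$ on each generator are affine linear in $r$, I apply Equation \eqref{eq:grformula} together with the bigon analysis of Example \ref{ex:bigons}. Fix any basis element $y$ whose position in $\dilate(\Hd(K)) \oplus T_\infty$ is determined by the vertical simplification. The chain of bigons connecting $y$ to the normalized bottom generator $b_{r,n}$ (placed at bidegree $(0,0)$ by the shift $\vnp{b_{r,n}}^{-1}$) consists of two types, both enumerated in the proof of Lemma \ref{lem:cablingstr}: \emph{vertical} bigons covering exactly one $z$-basepoint (contributing $+1$ to the Alexander grading and $+1$ to the Maslov grading, independently of $r$), and \emph{horizontal} bigons covering $r-1$ many $w$-basepoints (contributing $0$ to the Alexander grading and $2r-3$ to the Maslov grading). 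The number of each type of bigon separating $y$ from $b_{r,n}$ is determined purely by the abstract position of $y$ in $\dilate(\Hd(K)) \oplus T_\infty$. Hence the bidegree of $y$ as an element of $S^r\cfk(K)$ differs from its bidegree as an element of $S^{r+1}\cfk(K)$ by an expression of the form $(a + br, c + dr)$ for integers $a,b,c,d$, i.e.\ by an affine linear function of $r$.

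The main obstacle will be the bookkeeping of the Maslov grading via Equation \eqref{eq:grformula}, since the rotation and grading-arrow weight terms can in principle produce subtle corrections between successive components of $\og_K$. The computation is made tractable by the rigidity of the cabling construction of Theorem \ref{thm:cablecurve}: every sub-bigon in $\dilate(\Hd(K))$ and every bigon in the unknot-like tail is a translate of one of the two model bigons in Example \ref{ex:bigons}, so its winding-number, rotation, and weight contributions are each either $r$-independent or exactly linear in $r$. Summing along the chain from $b_{r,n}$ to $y$ yields the desired affine linear bidegree shift and completes the argument.
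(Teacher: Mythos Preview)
Your approach is essentially the paper's, and the ungraded isomorphism is handled correctly. Two issues in the grading analysis deserve attention.

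First, a minor computational slip: the horizontal bigon covering $r-1$ many $w$-basepoints contributes $\pm(r-1)$ to the Alexander grading difference, not $0$, since $A(x)-A(y)=n_z(B)-n_w(B)$. This is still affine linear in $r$, so your conclusion survives, but the stated value is wrong.

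Second, and more substantively, your claim that ``every sub-bigon \dots\ is a translate of one of the two model bigons in Example~\ref{ex:bigons}'' is too strong. When $\og_K$ has several components, the bottom generator $\bgen$ need not lie on the same component as $\dgen$ or as a given $1_i$, and the chain of bigons connecting them must traverse a grading arrow; such a bigon is not a translate of either model. Moreover, even on a single component the relevant bigons need not be literal translates of the two models---they may enclose several collections of marked points at once. The paper handles this by analyzing the three terms in Equation~\eqref{eq:grformula} separately: the grading-arrow weight $\text{Wght}$ is $r$-independent because the arrows can be taken below $\omu$ and do not cross marked points under cabling; the rotation $\text{Rot}$ is $r$-independent because the bigon shapes are unchanged by adding a row above; and $\text{Wind}_w$ is always an integer combination of $(r-1)$ and $1$, since each enclosed cluster of marked points is either a single $w$ from the bottom row or $(r-1)$ many $w$'s from the rows above. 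You have identified the right mechanism but need this decomposition of the grading formula rather than the (false) assertion that only two bigon shapes occur.
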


\begin{proof}
If we forget about gradings then there are canonical isomorphisms $\psi^r : \scfk(K) \to \sscfk(K)$ by virtue of the $r$-independence of the expression  from Cor. \ref{thm:stablelim}:
$$\scfk(K) = \subdiv(\Hd(K)) \opp \lim_{n\to\infty} \subdiv(\Delta_{n-g-\tau-1}).$$ 
Notice that it suffices to show affine
linearity of $\psi^r$ on generators $1_{i}$ since the
others are related to an even-index generator by a bigon containing a $z$
marked point which introduces a grading difference that is independent of
$r$. We will show affine linearity of $\psi^r$ in Alexander and Maslov gradings separately.

{\em Alexander gradings.}
For a generator $y$, we write $A(y)$ for the Alexander grading and $A'(y)$ for the Alexander grading in the shifted complex from Cor. \ref{thm:stablelim}.
We will first investigate grading changes associated to $\dgen$ and then
turn to the other generators. The Alexander grading of $\dgen$ is
straightforward to determine by looking at the $r$-scaled, shifted copy of
$\og_K$ in the final row of the cabling algorithm. The generator $\bgen$,
which we recall has least Maslov grading among the generators with minimal
Alexander grading, sits at height $0$. Being a generator with least
Alexander grading, its associated curve segment necessarily turns to the right, or upward in height, to
intersect $\overline{\mu}^r$ between the two bottom rows, as otherwise
there would be a generator with a smaller Alexander grading. Likewise, the
curve segment associated to $\dgen$ also turns upward to begin the tail
between the final two rows. This means that the difference in their
heights $A'(\dgen) - A'(\bgen)$ is divisible by $r$.
 (see Figure \ref{fig:RScaling}), and if $h$ measures heights in $\hC(K)$ we see that 
\begin{equation}\label{eq:Ashift1}
A'(\dgen)-A'(\bgen) = r(A(\dgen)-A(\bgen)).
\end{equation}
We know that the difference $A(\dgen)-A(\bgen)$ is precisely $\tau(K) + g(K)$ since the Alexander grading of $x$ is $\tau(K)$ by definition, and the Alexander grading is shifted before $r$-scaling by $-A(b) = g(K)$. Thus, the shifted Alexander grading of $\dgen$ in $S^r\cfk(K)$ is 
\begin{equation}\label{eq:AshiftF}
A'(\dgen) = r(\tau(K)+g(K)).
\end{equation}
Finally, any other generator $1_i$ coming from a curve segment that turns upward, (the complex is reduced), has its Alexander grading differ from that of $\dgen$ as a multiple of $r$ for the same reasons.

{\em Maslov gradings.}  Since the generator $\bgen$ can belong to any
component of the curve $\og_K$, determining Maslov gradings of generators
relative to that of $\bgen$ may involve traversing a grading arrow which
introduces a non-zero $2\text{Wght}(B)$ term in Hanselman's
Eqn. \eqref{eq:grformula}.  Fortunately, the weight $m$ of any such grading
arrow is independent of $r$\footnote{Specifically, we can always take grading arrows to lie below $\omu$, since no marked points cross the arrows as we move to $\og_{rK}$, none of its features change.},
since a grading arrow belonging to the bottom row does not
cross over any marked points when constructing the cabled curve
$\oga_{K_{r,rn+1}}$.

We know from the general algorithm to build $\oga_K$ from
$\textit{CFK}^-(K)$ (not necessarily using a horizontally- and vertically-simplified
basis) from Prop. \ref{prop:HFfromCFK} that a sequence of
immersed bigons connects any two generators in
$\vnp{b_{r,n}}^{-1}\hC(K_{r,rn+1})$. Let $\displaystyle \ast_i B_i$ be the 
concatenation of such a sequence of bigons which connects the bottom generator $\bgen$ to the distinguished generator $\dgen$ (of the copy of the curve $\og_K$ in the lowest row). Using
Eqn. \eqref{eq:grformula}, we have that
\begin{align}\label{eq:Mshift1}
M(\dgen)-M(\bgen) &= 2(\text{Wind}_w+\text{Wght}-\text{Rot})(\ast_{i} B_i) \\
&= 2\text{Wind}_w(\ast_i B_i) + 2m - 2\text{Rot}(\ast_{i} B_i).
\end{align} 
In this formula, $m$ is the weight of the grading arrow traversed by the first bigon
$B_1$ to go from $\bgen$ to a generator on the distinguished component $\og_0 \subset \og_K$ which ``wraps around the cylinder.''  If $\bgen$ already lies on $\og_0$ then $m=0$ because no grading arrow is required.

Observe that the next term, $\text{Rot}(\ast_i B_i)$, does not depend on $r$, since
the bigons between intersections in $\og_{K_{r+1, (r+1)n+1}} \pitchfork \omu^{r+1}$
can be assumed to belong to the two bottom rows
and are unchanged under a slight homotopy of $\overline{\mu}$ to
allow for an extra row above. 

However, the $\text{Wind}_w(\ast_i B_i)$-term will change because of the extra row of marked
points. Observe that any bigon $B_i$ between generators of the two bottom
rows contains either collections of the $(r-1)$-many $w$ marked points
from the rows above or collections of single $w$
marked points from the bottom row. This implies that $\text{Wind}_w(B_i)$
is a linear combination of $(r-1)$ and $1$, so that we may sum these
contributions and upgrade Equation \ref{eq:Mshift1} to be
\begin{align}\label{eq:MshiftF}
M(x)-M(b) &= 2\text{Wind}_w(\ast_i B_i) + 2m - 2\text{Rot}(\ast_{i} B_i) \\
&= 2\alpha (r-1) + 2\beta + 2m - 2\text{Rot}(\ast_i B_i) \\
&= 2\alpha (r-1) + \delta,
\end{align}
with $\delta$ capturing the $r$-invariant contribution to the grading difference. An affine linear function of $r-1$.

Since there is a sequence of bigons connecting the distinguished generator
$\dgen$ to any generator $y$ in the image of the vertical differential (such as
$1_{i}$), the same reasoning about the location and shape
of bigons, as well as the weight of a potential grading arrow, applies to
show that
\begin{equation}\label{eq:MshiftExtra}
M(y)-M(\dgen) = 2\kappa (r-1) + \nu.
\end{equation}
(Note that an affine function of $(r-1)$ is an affine function of $r$.)
\end{proof}

\begin{example}\label{exampleunknotcomplex}
The illustration below shows two immersed curves: the stable curve
$[U]_r$ associated to the unknot is drawn in red and the lift of the meridian
$\mu$ is the blue curve which zigzags between the two different types of base
points (see Not. \ref{circlenotation}). The points of intersection between the two curves constitute
generators for the chain complex $\scfk(U)$, see
Fig. \ref{fig:StableUnknot}.

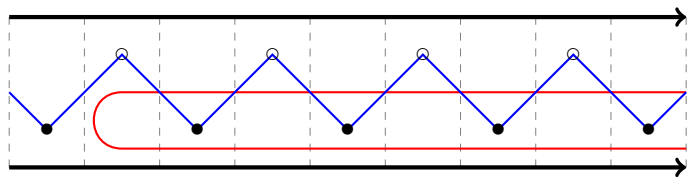
\begin{figure}[h!]
\centering
\begin{tikzpicture}
\draw[gray, very thin, dashed, step=1cm] (0,.5) grid[ystep=0] (9,2.5);
\draw[ultra thick,->] (0,2.5) -- (9,2.5);
\draw[ultra thick,->] (0,.5) -- (9,.5);

\draw[thick, red] (9,.75) -- (1.5,.75);
\draw[thick, red] (9,1.5) -- (1.5,1.5);
\draw[thick, red] (1.5,.75) .. controls (1,.75) and (1,1.5) .. (1.5,1.5);

\draw[thick,blue] (0,1.5) -- (.5,1) -- (1.5,2) -- (2.5,1) -- (3.5,2) -- (4.5,1) 
-- (5.5,2) -- (6.5,1) -- (7.5,2) -- (8.5,1) -- (9,1.5);
 \foreach \x in {0.5,2.5,4.5,6.5,8.5}
     \draw (\x,1) node {$\bullet$};

 \foreach \x in {1.5,3.5,5.5,7.5}
     \draw (\x,2) node {$\circ$};

  \end{tikzpicture}
\caption{The stable curve associated to the unknot.}
\label{fig:StableUnknot}
\end{figure}

The Poincar\'{e} polynomial of the $S^r$-colored unknot $\fcfk(U)$ is
$$\pnp{U}_{S^r}(t,q) = \frac{1+tq}{1-t^{2(r-1)} q}.$$
As a vector space 
\begin{align*}
  \fcfk(U) &\cong \FF_2[u] \ott \Lambda(\xi)\\
           &\cong \FF_2\inp{u^n\xi^k : n\in \ZZg, k\in \{0,1\} }.
\end{align*}
The grading is given by $\vnp{u} = t^{2(r-1)} q^r$ and $\vnp{\xi} = t^1q^1$ so that $\vnp{u^n\xi^k}=\vnp{u}^n\vnp{\xi}^k$. There is a differential $d : \fcfk(U) \to \fcfk(U)$ which is determined by the assignments
$$d(u):=0 \conj{ and } d(\xi) := 1$$
after extending by the Leibniz rule. The filtration is given by
$$F^\ell(\fcfk(U)) := \{ x : \vnp{x}_q \leq \ell \}.$$
Notice that, since the differential $d$ decreases $q$-degree by one, the differential of the associated graded chain complex
$\gscfk(U) = \gr(\fcfk(U))$  is identically zero. This is a general phenomenon.
  \end{example}

\begin{example}\label{exampletrefoilcomplex}
In a similar way, there is a filtered chain complex $\fcfk(3_1)$ associated to the right-handed trefoil, see Fig. \ref{fig:RScaling}. The stable curve is pictured in here.

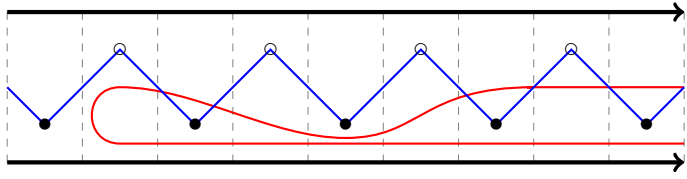
\begin{figure}[h!]
\centering
\begin{tikzpicture}
\draw[gray, very thin, dashed, step=1cm] (0,.5) grid[ystep=0] (9,2.5);
\draw[ultra thick,->] (0,2.5) -- (9,2.5);
\draw[ultra thick,->] (0,.5) -- (9,.5);

 \draw[thick,red] (9,1.5)
 .. controls +(left:1cm) and +(left:.5cm) .. (7,1.5)
 .. controls +(left:1.5cm) and +(right:1cm) .. (4.5,.825)
 .. controls +(left:1cm) and +(right:1cm) .. (1.5,1.5);

 \draw[thick, red] (9,.75) -- (1.5,.75);
\draw[thick, red] (1.5,.75) .. controls (1,.75) and (1,1.5) .. (1.5,1.5);

\draw[thick,blue] (0,1.5)--(0.5,1) -- (1.5,2) -- (2.5,1) -- (3.5,2) -- (4.5,1) 
-- (5.5,2) -- (6.5,1) -- (7.5,2) -- (8.5,1) -- (9,1.5);
 \foreach \x in {0.5,2.5,4.5,6.5,8.5}
     \draw (\x,1) node {$\bullet$};

 \foreach \x in {1.5,3.5,5.5,7.5}
     \draw (\x,2) node {$\circ$};

  \end{tikzpicture}
\caption{Stable curve associated to the right-handed trefoil}
\label{fig:RaStableRHT}
\end{figure}

The Poincar\'{e} polynomial of the right-handed trefoil is given by two terms associated to the intersection points near the ``head'' of the curve, together with a ``tail'' which can be identified with a shifted copy of the $S^r$-colored unknot found in Example \ref{exampleunknotcomplex}.
$$\pnp{3_1}_{S^r}(t,q) = 1 + tq + t^{4r-2}q^{4r} \pnp{U}_{S^r}$$
As a vector space
\begin{align*}
  \fcfk(3_1) &:= \Lambda(c) \oplus t^{4r-2}q^{4r}\fcfk(U)\\
  &\cong \Lambda(c) \oplus t^{4r-2}q^{4r}\left(\FF_2[u]\otimes \Lambda(\xi) \right)
\end{align*}  
where $\Lambda(c) := \FF_2\inp{1_c, c}$ and $\Lambda(\xi) := \FF_2\inp{\xi, 1}$ and the bidegrees are given by $\vnp{c}=t^1q^1$, $\vnp{u} = t^{4r-2}q^{4r} t^{2(r-1)}q = t^{6r-1} q^{4r+1}$ and $\vnp{\xi} = t^{4r-2}q^{4r} t^1 q^1 = t^{4r-1} q^{4r+1}$. The differential $d : \fcfk(3_1)\to \fcfk(3_1)$ is determined by the assignements $d(c) := 1_c$, $d(u):=0$ and $d(\xi):=1$. 

As for the unknot, the filtration is defined by $q$-degree and the differential vanishes on the associated graded complex.
  \end{example}

The proposition below summarizes the information contained in this
section. The filtered graded chain complexes we constructed in this section
are similar in form to the right-handed trefoil above
Ex. \ref{exampletrefoilcomplex} but with some extra $\Lambda(a)$-type
terms. The contents of this section which are used in \S\ref{recsec} are summarized by the proposition below.

\begin{prop}\label{propprop}
  For each knot $K$, the filtered chain complex $\fcfk(K)$ admits a canonical decomposition. In the category $H^0(F^*\Ch_\ZZ)$ there are isomorphisms
\begin{equation}\label{decompeqn}
  \fcfk(K) \cong \left(\Lambda(a_{1,r})\opp\Lambda(a_{2,r})\opp\cdots\opp\Lambda(a_{n,r})\right)\opp \theta^r \fcfk(U).
  \end{equation}
where $\theta$ is an overall $(t,q)$-degree shift.

The {\em head} of the decomposition consists of a finite sum $\opp_i \Lambda(a_{i,r})$ of 2-dimensional complexes,
\begin{equation}\label{extalgeq}
  \Lambda(a_{i,r}) \cong \FF_2\inp{1_{i,r},a_{i,r}}
\end{equation}  
The differential is defined to be $d(a_{i,r}) := 1_{i,r}$ with grading equal to $\vnp{a_{i,r}} = tq\vnp{1_{i,r}}$
Moreover, for each $1\leq i\leq n$, the grading $\vnp{1_{i,r}} = t^{mr+b} q^{nr+c}$ is an affine linear function in $r$.

On the other hand, the {\em tail} of the decomposition is a shifted copy of the unknot complex $\theta^r\fcfk(U)$ from Ex. \ref{exampleunknotcomplex},
  $$\fcfk(U) \cong \FF_2[u]\opp \Lambda(\xi)$$
the grading is determined by the assignments $\vnp{u} = t^{2(r-1)} q^r$ and $\vnp{\xi} = tq$ in the sense that $\vnp{u^n\xi^k}=\vnp{u}^n\vnp{\xi}^k$ and the differential is fixed by setting $d(u):=0$ and $d(\xi) := 1$.
  \end{prop}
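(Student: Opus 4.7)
The plan is to assemble Proposition \ref{propprop} from the three technical results already established: Lemma \ref{lem:cablingstr} (the head–tail decomposition for finite cables), Corollary \ref{thm:stablelim} (the existence of the limit), and Proposition \ref{prop:grading} (affine linearity of the gradings in $r$). Since the statement is essentially a consolidation of these, the job is to patch them together as a single filtered-graded isomorphism and then verify that the tail factor can be named explicitly as $\theta^r \fcfk(U)$.

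First I would start from the ungraded identification
\[
\tau_A^{\leq rn+1}\vnp{b_{r,n}}^{-1}\hC(K_{r,rn+1}) \;\cong\; \dilate(\Hd(K)) \,\oplus\, \subdiv(\Delta_{n-g-\tau-1})
\]
given by Lemma \ref{lem:cablingstr}. The head $\dilate(\Hd(K))$ is a finite sum of $\Lambda$-summands, one for each element of the vertically simplified basis $\{x_i\}_{i\in I}$ of $\Hd(K)$, further subdivided (Def.~\ref{def:dilation}) into unit-length $\Lambda$-pieces. Since $\Hd(K)$ is finite and each subdivided piece is again of the form $\Lambda(a_{i,r})$ with $d(a_{i,r}) = 1_{i,r}$, after re-indexing the head becomes the finite sum $\bigoplus_{i=1}^{n}\Lambda(a_{i,r})$ displayed in \eqref{extalgeq}.

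Next I would handle the tail. As $n\to\infty$ the complexes $\subdiv(\Delta_{n-g-\tau-1})$ fit into the directed system of Example \ref{example:dilate}, whose colimit is precisely the underlying vector space of $\fcfk(U)$ in Example \ref{exampleunknotcomplex}. By Corollary \ref{thm:stablelim} the inclusion maps $\iota_n$ have diverging isomorphism order, so in $H^0(F^{*}\Ch_{\ZZ})$ they exhibit a canonical isomorphism between the tail summand and a shift $\theta^r\fcfk(U)$ of the colored unknot complex, with the overall shift $\theta^r$ absorbing the $r$-dependent alignment of the bottom generator. The differentials $d(u)=0$, $d(\xi)=1$ on the unknot tail come directly from the bigon counts described in the proof of Lemma \ref{lem:cablingstr}: each unit $\Lambda$-piece corresponds to a bigon covering a single $z$, while the $u$-action corresponds to moving up the tail by one period along the non-compact arc.

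Finally, to promote the decomposition to a graded statement I would invoke Proposition \ref{prop:grading}. That proposition expresses the Alexander and Maslov gradings of every head generator and of the generators $u$ and $\xi$ of the unknot tail as affine linear functions of $r$: the Alexander grading scales by the factor $r$ coming from Eqn.~\eqref{eq:Ashift1}, and the Maslov grading is affine linear via Eqn.~\eqref{eq:MshiftF} and Eqn.~\eqref{eq:MshiftExtra}. Reading off the formulas gives $\vnp{u}=t^{2(r-1)}q^{r}$, $\vnp{\xi}=tq$ for the tail and $\vnp{a_{i,r}} = tq\vnp{1_{i,r}}$ with $\vnp{1_{i,r}}=t^{mr+b}q^{nr+c}$ for the head, as claimed. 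The main obstacle in executing this plan is bookkeeping: one must verify that the inclusion maps $\iota_n$ are compatible with filtrations and with the absolute gradings after the shift by $\vnp{b_{r,n}}^{-1}$, and that no hidden head generators drift into the tail region as $n$ grows. Both are controlled by the $n>2g-1$ hypothesis in Lemma \ref{lem:cablingstr}, so once that hypothesis is invoked the proof reduces to assembling the pieces.
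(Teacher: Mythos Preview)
Your proposal is correct and matches the paper's approach: the proposition is stated there explicitly as a summary of the section, with no separate proof, and the three ingredients you assemble (Lemma~\ref{lem:cablingstr}, Corollary~\ref{thm:stablelim}, and Proposition~\ref{prop:grading}) are exactly the results it is meant to consolidate. Your identification of the tail via the directed system of Example~\ref{example:dilate} and the grading bookkeeping via Proposition~\ref{prop:grading} are precisely what the paper intends.
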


\begin{remark}
The stable curve in the infinite annulus isn't an object in the same Fukaya
category as the curves associated to the cablings. We do not address here the
question of how to describe the category in which this curve lives.
  \end{remark}

\section{Weyl algebra and homological holonomicity}\label{wahhsec}
\newcommand{\xM}{\aM}
\newcommand{\xL}{\aL}
\newcommand{\xA}{\aA}

\subsection{Some algebraic definitions}\label{app1sec}

If $\eA$ is an abelian category then there is an additive category $F^*\eA$ of {\em filtered objects} in $\eA$. An object of the category $F^*\eA$ is a sequence $E := \{F^n E\}_{n\in \ZZ}$ with inclusions $F^{n+1} E\subseteq F^n E$. Maps $f : E \to E'$ are maps in $\eA$ which commute with inclusions. Similarly, if $\eA$ is an additive category then there is an additive category $\eA_{\ZZ}$ of {\em graded objects} in $\eA$. An object of $\eA_\ZZ$ is a sequence $\{E^n\}_{n\in\ZZ}$ of objects in $\eA$ and maps $f : E\to E'$ are collections $\{ f^n : E^n \to E'^{n} \}_{n\in\ZZ}$ of maps in $\eA$. When $\eA$ is abelian, there is a functor called {\em associated graded }
$$\gr : F^*\eA \to \eA_\ZZ$$ which is defined on objects by $\gr(\{F^n E\}_{n\in\ZZ})^m := F^m E/F^{m+1} E$.

For an additive category $\eA$, there is a category $\Ch(\eA)$ of chain complexes over $\eA$.

\begin{defn}{($\Ch(\eA)$)}\label{chdef}
A {\em chain complex} $E\in \Ob(\Ch(\eA))$ of consists of pair $(\{E^k\}_{k\in \ZZ}, d_E^k : E^k \to E^{k+1})$  where $E^k\in \Ob(\eA)$ and $d^{k+1}_Ed^k_E = 0$  for all $k\in \ZZ$. 

A {\em map} $f\in \Hom^k(E,E')$ of degree $k$ is a collection $f = \{ f^i : E^{i} \to E'^{k+i} \}_{i\in\ZZ}$ of maps in $\eA$. For any such map, there is a map $\d^k f \in \Hom^{k+1}(E,E')$ with components
$$(\d^k f)^i := d_{E'}^{k+i} f^i + (-1)^{i+1} f^{i+1}d_E^i$$
In this way, the set of maps $\Hom(E,E') = (\{\Hom^k(E,E')\}_{k\in\ZZ}, d^k : \Hom^k(E,E')\to \Hom^{k+1}(E,E'))$ is a chain complex. 
\end{defn}

The composition satisfies the Leibniz rule with respect to the differential making $\Ch(\eA)$ a category which is enriched over itself. More generally, a category which is enriched over chain complexes is called a {\em differential graded (dg) category}.

A dg category $\eC$ has a number of auxilliary structures stemming from the differential. A map $f$ is a {\em cycle} if $\d f=0$. Two maps, $f$ and $g$ are {\em homotopic} or $f\simeq g$ when there is a map $H$ such that $\d H = f-g$. There is a category $Z^0(\eC)$ consisting of degree zero cycles and the {\em homotopy category} $H^0(\eC) := Z^0(\eC)/\simeq$ consisting of cycles modulo boundaries. Two objects $E,F\in \Ob(\eC)$ are {\em homotopy equivalent} $E\simeq F$ when they are isomorphic $E\cong F$ in the homotopy category $H^0(\eC)$. An object $E\in\Ob(\eC)$ is {\em contractible} when $E\simeq 0$.

\subsection{Action of Weyl algebra}\label{weylsec}

The dg category $\SSeq$ introduced below will be our replacement for the collection of sequences $\Seq$.

\begin{defn}{($\SSeq(\eA)$)}
Suppose $\eA$ is a dg category. Then the dg category of sequences is defined by $\SSeq(\eA) := \prod_{i\in\ZZg} \eA$. In more detail, the objects are given by sequences $E=(E_i)_{i\in\ZZg}$ of objects $E_i\in \Ob(\eA)$. A map of degree $k$ from $E=(E_i)_{i\in\ZZg}$ to $E'=(E'_i)_{i\in\ZZg}$ is given by an element of the product
$$\Hom_{\SSeq(\eA)}^k(E,E') = \prod_{i\in\ZZg} \Hom_\eA^k(E_i,E'_i).$$
If $f : E\to E'$ then we write $f = (f_i)_{i\in \ZZg}$, or just $f=(f_i)$, for the sequence of maps which determine $f$.
Given two maps $f : E\to E'$ and $g : E'\to E''$, the composition and differential are defined componentwise:
\begin{equation}\label{compeq}
  (g\circ f) = (g_i\circ f_i)_{i\in \ZZg} \conj{ and } \d(f) := (d_{E_i}f_i)_{i\in\ZZg}.
  \end{equation}
The identity map is $1_E=(1_{E_i})_{i\in\ZZg}$. 
\end{defn}

For a dg category $\eA$, the Grothendieck group $K_0(\eA)$ is simply an abelian group. When a category $\eA$ has additional structure, this structure is reflected in $K_0(\eA)$.
For instance, if $\eA$ supports a $\ZZ$-action then the Grothendieck group $K_0(\eA)$ is a $\ZZ[q,q^{-1}]$-module. If $\eA = (\eA,\otimes,1)$ is a monoidal category then $K_0(\eA)$ is a ring. 
More details and additional discussion of the ideas surrounding the statement of the proposition below can be found in the reference \cite{KMS}.

\begin{prop}\label{weylprop}
  Suppose that $\eA$ is a dg category which supports a $\ZZ$-action. Then there are dg functors 
$$\xM, \xL : \SSeq(\eA) \to \SSeq(\eA)$$ 
and a natural isomorphism
\begin{equation}\label{Req}
  R : \xM\circ\xL \xto{\sim} t^2q \circ \xL \circ \xM.
  \end{equation}
which lifts the action of the Weyl algebra on the right-hand side of the map
$$\Phi : K_0(\SSeq(\eA))\to K_0(\eA)^{\ZZg} \conj{ given by } \Phi((E_i)_{i\in\ZZg}) := ([E_i])_{i\in\ZZg}$$
\end{prop}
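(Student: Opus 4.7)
The plan is to exhibit $\xM$ and $\xL$ as dg functors by direct componentwise formulas, assemble $R$ as the obvious discrepancy map, and then read off the Weyl action on $K_0$. First I would set $\xL : \SSeq(\eA) \to \SSeq(\eA)$ to be the shift sending $E = (E_i)_{i\in\ZZg}$ to $(\xL E)_i := E_{i+1}$, with $(\xL f)_i := f_{i+1}$ on morphisms. Because both composition and differential in $\SSeq(\eA)$ are defined componentwise (see Eqn.~\eqref{compeq}), the fact that $\xL$ preserves composition, identities, and the differential is immediate; in particular $\xL$ is a dg functor.

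Next, let $\sigma : \eA \to \eA$ denote the auto-equivalence corresponding to $1\in\ZZ$ in the given $\ZZ$-action. Define $\xM$ by $(\xM E)_i := \sigma^i(E_i)$ and $(\xM f)_i := \sigma^i(f_i)$. Each $\sigma^i$ is a dg functor, so applying it componentwise gives a dg functor $\xM$; the verification is again componentwise. With these definitions we have, for each object $E$ and index $i$,
\[
(\xM\circ\xL)(E)_i = \sigma^i(E_{i+1}) \conj{and} (\xL\circ\xM)(E)_i = \sigma^{i+1}(E_{i+1}).
\]
Thus the two composites differ in position $i$ by exactly one additional application of $\sigma$; this is the origin of the natural transformation $R$. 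Once one fixes the convention that the auto-equivalence packaging a simultaneous Maslov-by-$2$ and Alexander-by-$1$ shift is precisely the functor $t^2q$ (matching the degree of the generator $u$ in the unknot model $\fcfk(U)$ of Ex.~\ref{exampleunknotcomplex}), the identity maps $\sigma\cdot\sigma^i(E_{i+1}) = \sigma^{i+1}(E_{i+1})$ assemble into an isomorphism $R_E : \xM\xL E \xto{\sim} (t^2q)\cdot \xL\xM E$. Naturality in $E$ follows because morphisms in $\SSeq(\eA)$ are componentwise and $\sigma$ commutes with itself.

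To finish, I would check the decategorification. Under $\Phi$, the class $[\xL E]$ is $([E_{i+1}])_i$, while $[\xM E] = (q^i[E_i])_i$ using that in $K_0(\eA)$ the $\ZZ$-action acts as multiplication by $q$. Consequently $[\xM\xL E]_i = q^i[E_{i+1}]$ and $[\xL\xM E]_i = q^{i+1}[E_{i+1}]$, and the natural iso $R$ becomes the relation $ML = t^2 q\cdot LM$ in $\mathrm{End}(K_0(\eA)^{\ZZg})$; specializing $t=-1$ (as in the Euler characteristic pairing) recovers the Weyl algebra relation of \S\ref{weylalgsec}, so the action factors through $\weyl$ as claimed. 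The main obstacle is essentially bookkeeping: one must be careful about the two gradings carried by the $\ZZ$-action so that the ``extra copy of $\sigma$'' appearing in $(\xL\xM E)_i$ is precisely the shift functor labeled $t^2q$; once that convention is in place, functoriality, naturality, and the Weyl relation are all formal componentwise consequences of the corresponding statements for $\sigma$ in $\eA$.
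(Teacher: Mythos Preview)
Your argument is correct and follows the paper's componentwise approach: define $\xL$ by index shift, define $\xM$ by applying graded shifts in each slot, verify the dg functor axioms slotwise, and read off the discrepancy $t^2q$ between $\xM\xL$ and $\xL\xM$. The one point of divergence is the precise definition of $\xM$. The paper sets $(\xM E)_i := t^{2i-2}q^i E_i$, treating $q$ (the generator of the given $\ZZ$-action) and $t$ (the homological shift) as two independent functors and building both into $\xM$ by hand; you instead take the $\ZZ$-action generator $\sigma$ itself to be $t^2q$ and set $(\xM E)_i := \sigma^i E_i = t^{2i}q^i E_i$. These differ only by a global $t^{-2}$, and the paper's remark immediately after the proof explicitly acknowledges your normalization as a natural alternative. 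Just be aware that in the paper's reading of the hypothesis the $\ZZ$-action is generated by $q$ alone, so identifying $\sigma$ with $t^2q$ is a convention you are imposing rather than something forced by the statement; once that choice is made, your computation of $R$ and the decategorification to the Weyl relation go through exactly as in the paper.
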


\begin{proof}
In order to introduce the map $\Phi$, one uses $K_0(\eA\times \eB)\cong K_0(\eA)\times K_0(\eB)$, see \cite[Ex. 2.1.4, p. 75]{Kbook}.

Recall from \S\ref{qholalexsec} that there are maps $M : \Seq\to \Seq$ and $L : \Seq\to \Seq$ which are defined on functions $f$ by Eqn. \eqref{lmeq}. In the proof of the statement above we will construct functors $\xM$ and $\xL$ which have the same effect as $M$ and $L$ on objects. Then we will find the isomorphism $R$.

The functor $\xM : \SSeq(\eA)\to \SSeq(\eA)$ is given by $\xM := \prod_{i\in\ZZg} t^{2i-2}q^i$. In more detail, $\xM$ is defined on objects $E=(E_i)_{i\in \ZZg}$ by
$$\xM(E) := ((\xM E)_i)_{i\in \ZZg} \conj { where } (\xM E)_i := t^{2i-2}q^i E_i.$$
If $f : E\to E'$ is a map $f=(f_i)_{i\in\ZZg}$ of degree $k$ then there is a map $\xM(f) : \xM(E) \to \xM(E')$ of degree $k$, whose components are $(\xM f)_i := t^{2i-2} q^i f_i$. These assignments determine a dg functor by virtue of universal properties and being a componentwise dg functor. Alternatively, a short proof of the three properties below is included.
\begin{enumerate}[label=(\roman*)]
\item $\xM(g\circ f) = \xM(g)\circ \xM(f)$
\item $\xM(1_E) = 1_{\xM(E)}$
\item $d\xM(f)= \xM(df)$
\end{enumerate}
In each case, we use the functoriality of the assignment $f_i \mapsto  t^{2i-2} q^i f_i$.
For (i), $\xM(g\circ f)_i = t^{2i-2} q^i (g_i \circ f_i) = (t^{2i-2} q^i g_i) \circ (t^{2i-2} q^i f_i)= \xM(g)_i \circ \xM(f)_i$. For (ii), $\xM(1_{E})_i = t^{2i-2} q^i 1_{E_i} = 1_{t^{2i-2} q^i E_i} = 1_{\xM(E_i)}$. For (iii), $d\xM(f)_i = d(t^{2i-2}q^i f_i) = \xM(df)_i$.

On the other hand, the functor $\xL : \SSeq(\eA)\to \SSeq(\eA)$ is given by shifting objects and maps by one. On objects $E=(E_i)_{i\in \ZZg}$ the functor is defined by setting
$$\xL(E) := ((\xL E)_i)_{i\in \ZZg} \conj { where } (\xL E)_i := E_{i+1}$$
and if $f = (f_i)_{i\in \ZZg}$ is a map then $\xL(f)_i := f_{i+1}$. For properties (i), (ii) and (iii) above the proofs go as follows. For (i): $\xL(g\circ f)_i = (g\circ f)_{i+1} = g_{i+1}\circ f_{i+1} = \xL(g)_i \circ \xL(f)_i$. For (ii): $\xL(1_E)_i = 1_{E_{i+1}} = 1_{\xL(E)_i}$. For (iii): $d\xL(f)_i = df_{i+1} = \xL(df)_i$.

Finally, we compute the map $R$ in Eqn. \eqref{Req}. Given $f : E\to E'$, $f = (f_i)_{i\in\ZZ}$, the left-hand side is $(\xM\circ\xL)(f)_i = \xM(f_{i+1}) = t^{2i}q^{i+1} f_{i+1}$. On the right-hand side, we compute
$(\xL\circ\xM)(f)_i = t^{2i-2}q^{i}f_{i+1}$. The two sides differ by $t^2q$.
  \end{proof}

\begin{remark}
Computations suggest that in some cases it might be more natural to use $\xM:=\prod_{i} t^{2i}q^i$ or in Ex. \ref{pascalex} below, to use a two independent functors $\xM_{t^2} :=\prod_i t^{2(i-2)}$ and $\xM_q := \prod_i q^i$.
  \end{remark}

Let us now specialize our discussion to the dg category of $\eA = F^*\Ch_{\ZZ}$ (filtered) graded chain complexes. This category will be denoted by 
\begin{equation}\label{sseqdef}
\SSeq := \SSeq(F^*\Ch_\ZZ).
\end{equation}
Shifting the internal grading, together with the filtration degree, up by $1$ determines a functor $q : \SSeq\to \SSeq$ which
generates a $\ZZ$-action $\ZZ\cong\inp{q}$. Shifting the homological degree in $F^*\Ch_{\ZZ}$ yields a second automorphism $t : \SSeq\to \SSeq$ which commutes with the functors $\xM$ and $\xL$. In this way, the category $\SSeq$ is a module over the $t$-extended Weyl algebra 
\begin{equation}\label{texeq}
  \xA^+ := \xA\ott_{\ZZ}\ZZ[t,t^{-1}]
  \end{equation}
The homotopy category $H^0(\SSeq)$ is triangulated, so that the category $\SSeq$ is considered to be pre-triangulated \cite{BK}.

We next introduce a form of holonomicity, which we use in \S\ref{recsec}, as a general sort-of Whitehead tower.

\begin{defn}{(Holonomicity)}\label{homqholdef}
An object $E$ of a pre-triangulated category $\eC$ can be {\em assembled from a subcategory} $\eD\subset \eC$ if there is a length $n$ sequence of distinguished triangles in $H^0(\eC)$ of the form
\[
\begin{array}{cc}
X_1 \to X_0 \to Y_0 \to X_1[1]\\
X_2 \to X_1 \to Y_1 \to X_2[1]\\
\vdots\\
X_{n+1} \to X_n \to Y_n \to X_{n+1}[1].
\end{array}
\]
with $X_0 \cong E$, $X_{n+1} \cong 0$ and $\{Y_i\}_{i=0}^n\subset\Ob(\eD)$. Such a sequence of distinguished triangles is {\em non-trivial} if $Y_0\not\cong 0$.
If an algebra $B$ acts on a pre-triangulated category $\eC$ then let $\inp{B\cdot E} \subset \eC$ be the smallest thick pre-triangulated subcategory containing objects of the form $b(E)$ for $b\in B$. An object $E\in\Ob(\SSeq)$ is {\em holonomic} if $E\cong 0$ or $E\not\cong 0$ and it can be assembled in a non-trivial way from the smallest thick pre-triangulated category $\inp{\xA^+ \cdot E}$ containing orbit of the Weyl action on $E$.
  \end{defn}

Under the identification given by the map $\Phi$ from Prop. \ref{weylprop} this condition becomes $q$-holonomicity in the usual sense.

\begin{remark}
Our focus here is principally holonomicity as it is seen in relation to
conjectures in quantum topology, but in this ``categorified setting'' it
might also be interesting to investigate holonomicity properties of maps. For instance,
those assigned to cobordisms.
  \end{remark}

\subsection{Examples of holonomicity}\label{examplessec}

This section is concluded with a collection of examples related to the
definition of holonomicity from Def. \ref{homqholdef} above.

\begin{example}{(Constant sequence)}\label{constseqex}
If $C\in\Ob(F^*\Ch_\ZZ)$ then there is an object $\ul{C}\in \Ob(\SSeq)$ given by $\ul{C}_n := C$ for all $n\geq 0$. There is a map $f : \ul{C} \to \xL\ul{C}$ given by $f = (1_C : \ul{C}_n \to \ul{C}_{n+1})$. There is one distinguished triangle
  $$X_1 \to X_0 \xto{f} Y_0 \to X_1[1]$$
with $X_1 \cong 0$, $X_0 = \ul{C}$ and $Y_0 = \xL\ul{C}$.
\end{example}

\begin{example}{(Summing Sequence)}\label{summingseqex}
For each complex $C\in\Ob(F^*\Ch_\ZZ)$, there is an object $C^\opp \in \Ob(\SSeq)$ where $C^\opp_n := \opp_{i=0}^n C$ for $n\geq 0$. There is a map $i : C^\opp \to \xL(C^\opp)$, $i=(i_n)_{n\geq 0}$ where $i_n : \opp_{j=0}^n C\to \left(\opp_{j=0}^{n} C\right)\opp C$ is the inclusion of the first $n$-terms. Then the cone $\Cone(i)_n$ is homotopy equivalent to a single copy of $C$. So the cone on $f$ is isomorphic to the constant sequence, $\Cone(f)\cong \ul{C}$ in $H^0(\SSeq)$. Combining this observation with the construction for the constant sequence in Example \ref{constseqex} gives the following collection of distinguished triangles.
\[
\begin{array}{cc}
X_1 \to X_0 \xto{i} Y_0 \to X_1[1]\\
X_2 \to X_1 \xto{f} Y_1 \to X_2[1]\\
\end{array}
\]
where $(X_0,X_1,X_2)=(C^{\opp}, \ul{C},0)$ and $(Y_0,Y_1)=(\xL(C^{\opp}), \xL(X_1))$.

On the other hand, unwinding the sequence of extensions above gives the diagram below. 
\begin{center}
\begin{tikzpicture}[scale=10, node distance=2.5cm]
\node (A) {$C^{\opp}$};
\node (B) [right of=A] {$\xL(C^\opp)$};
\node (C) [below of=A] {$\xL(C^\opp)$};
\node (D) [below of=B] {$\xL\!\xL(C^\opp)$};
\draw[->] (A) to node {$i$} (B);
\draw[->] (C) to node {$\xL(i)$} (D);
\draw[->] (A) to node [swap] {$f$}  (C);
\draw[->] (B) to node {$\xL(f)$} (D);
\end{tikzpicture} 
\end{center}
Holonomicity could also be formulated in terms of contractible twisted complexes of this sort.
  \end{example}

The next example was explored by the first author and O. Yacobi circa 2012. This is the original motivation for Def. \ref{homqholdef}.

\begin{example}{(Pascal's Triangle)}\label{pascalex}
The Grassmannian $Gr(d,n)$ of $d$-planes in $\CC^n$ is stratified by Schubert varieties 
$Gr(d,n) = \cup_I X(I)$ \cite{Brion} 
where $I := (1\leq i_1 < i_2 < \cdots < i_d \leq n)$ and $\dim X(I) = 2\sum_{k=1}^d (i_k-k)$. If $H(d,n) := H_*(Gr(d,n);\ZZ)$ then there is an identification $H(d,n) = \opp_I \ZZ\inp{[X(I)]}$. The Poincar\'{e} polynomial of the homology can be identified with the $q$-binomial coefficients as follows,
$$\pnp{H(d,n)}(t,q^{\frac{1}{2}}) = \bigoplus_k q^{k} \dim H_{2k}(Gr(d,n)) = \left[\begin{array}{c} n\\ d \end{array}\right] = \frac{[n]!}{[d]![n-d]!}$$
where $[n]=(q^n-q^{-n})/(q-q^{-1})$ and $[n]!=[n][n-1]\cdots[1]$. This is a $q$-analogue of the binomial coefficient in the sense that the limit $q\to 1$ gives $n!/(d!(n-d)!)$.

There are degree zero maps $\a : q^{2(n-d)} H(d-1,n-1) \to H(d,n)$ and $\beta : H(d,n) \to H(d,n-1)$ which are defined by
\begin{align*}
   \a([X(i_1, \cdots, i_{d-1})]) := [X(i_1, i_2,\cdots, i_{d-1},n)]\conj{ and }\\ 
\beta([X(i_1, \cdots, i_d)]) := \left\{\begin{array}{ll} 0 &\textnormal{ for } i_d = n\\ 
                                      {[}X(i_1,i_2,\cdots,i_d){]} &\textnormal{ for } i_d<n 
                                      \end{array} \right.
\end{align*}
It follows from the definitions above that there is a short exact sequence,
$$0 \to q^{2(n-d)} H(d-1,n-1) \xto{\a} H(d,n) \xto{\beta} H(d,n-1) \to 0.$$
This short exact sequence, which determines Pascal's triangle, also produces many combinatorial examples in our setting.
\end{example}

\newcommand{\sru}{S^r(U)}
\begin{example}{(Unknot)}
Let's make an abbreviation by setting $\sru := \fcfk(U)$.
  From Ex. \ref{exampleunknotcomplex}, the unknot homology can be identified with
\begin{align*}
  \sru &\cong \FF_2[u] \ott \Lambda(\xi)\\
           &\cong \FF_2\inp{u^n\xi^k : n\in \ZZg, k\in \{0,1\} }.
\end{align*}
The grading is given by $\vnp{u} = t^{2(r-1)} q^r$ and $\vnp{\xi} = t^1q^1$ so that $\vnp{u^n\xi^k}=\vnp{u}^n\vnp{\xi}^k$. The differential is determined by setting $d(u):=0$  and $d(\xi) := 1$. 

To build a recurrence relation for the sequence $(S^r(U))_{r\in\ZZg}\in \SSeq$, we begin by observing that there are maps 
$$h_r : \sru\to \vnp{u} \sru \conj{ where } h_r(u^n\xi^k) := u^{n-1} \xi^{k}$$
the degree of $h_r$ is $\vnp{h_r} = \vnp{u}\vnp{u^{n-1}\xi^k}-\vnp{u^n\xi^k}=0$, and $h_r$ is a chain map which respects the $q$-filtration because it decreases $q$-degree: $h_r : F^\ell(\sru) \to F^{\ell-r}(\sru)\subset F^\ell(\sru)$ where $F^\ell(\sru) :=\{x : \vnp{x}_q\leq \ell\}$. The cone $\Cone(h_r)$ is homotopy equivalent to the 2-dimensional subcomplex $\Lambda(\xi):=\FF_2\inp{1,\xi}$. So there is a map $h : \sru \to \xM\sru$ given by $h:=(h_r)_{r\geq 0}$ such that
$$\Cone(h) \cong \ul{\Lambda(\xi)} \conj{ in } \SSeq$$
is isomorphic to the constant sequence for $\Lambda(\xi)$ in $\SSeq$. The recurrence relation is now completed by following Example \ref{constseqex} above, there is a map $f : \Cone(h)\to \xL(\Cone(h))$ so that $\Cone(f) \cong 0$ in $\SSeq$. 

The diagram below illustrates the recursion
\begin{center}
\begin{tikzpicture}[scale=10, node distance=2.5cm]
\node (A) {$\sru$};
\node (B) [right of=A] {$\xM\sru$};
\node (C) [below of=A] {$\xL\sru$};
\node (D) [below of=B] {$\xL\!\xM\sru$};
\draw[->] (A) to node {$h$} (B);
\draw[->] (C) to node {$\xL(h)$} (D);
\draw[->] (A) to node [swap] {$f$}  (C);
\draw[->] (B) to node {$f$} (D);
\end{tikzpicture} 
\end{center}
This diagram categorifies Lemma \ref{unknotlem}.
  \end{example}

The next example is more representative of the general case in \S\ref{recsec}.

\newcommand{\srt}{S^r(3_1)}
\begin{example}{(Right-handed trefoil)}
Set $\srt := \fcfk(3_1)$. 
From Ex. \ref{exampletrefoilcomplex}
\begin{align*}
  \srt &:= \Lambda(c) \oplus t^{4r-2}q^{4r}\fcfk(U)\\
  &\cong \Lambda(c) \oplus t^{4r-2}q^{4r}\left(\FF_2[u]\otimes \Lambda(\xi) \right)
\end{align*}  
This time the bidegrees are given by $\vnp{c}=t^1q^1$, $\vnp{u} = t^{4r-2}q^{4r} t^{2(r-1)}q = t^{6r-4} q^{4r+1}$ and $\vnp{\xi} = t^{4r-2}q^{4r} t^1 q^1 = t^{4r-1} q^{4r+1}$. The differential is given by $d(c) := 1_c$, $d(u):=0$ and $d(\xi):=1$. 

As with the unknot in Ex. \ref{exampleunknotcomplex} above, we define a map $f=(f_r)$, with components $f_r : \srt \to \vnp{u}\srt$ for which  $f_r(c):=0$, $f_r(1_c):=0$ and $f_r(u^n\xi^k):= u^{n-1}\xi^k$, but now the neglected term $\Lambda(c)$ doubles in the sense that
$$\Cone(f_r) \simeq \Lambda(c)\opp \vnp{u}\Lambda(c) \opp \Lambda(\xi)$$
Next there is a map $g := (g_r)$, the components $g_r : \Cone(f_r) \to \Cone(f_{r+1})$ are given by $g_r(c) := c$, $g_r(1_c):=1_c$ and $g_r(x):=0$ for $x\not\in \Lambda(c)$. The cone is 
$$\Cone(g_r) \simeq [\vnp{u_r}\Lambda(c_r) \opp \Lambda(\xi_r)]\opp [\vnp{u_{r+1}}\Lambda(c_{r+1}) \opp \Lambda(\xi_{r+1})]$$
where we include subscripts on the generators to emphasize their different graded degrees. There is a map $h=(h_r)$, where $h_r : \Cone(g_r)\to \a\Cone(g_{r+1})$ with $\a := \vnp{u_r}/\vnp{u_{r+1}} = t^{6r-4} q^{4r+1}/(t^{6(r+1)-4} q^{4(r+1)+1})=t^{-6}q^{-4}$ is independent of $r$, $h_r$ identifies the terms indicated below
$$\vnp{u_r}\Lambda(c_r) \xto{\sim} \a\vnp{u_{r+1}} \Lambda(c_{r+1})\conj{ and } \vnp{u_{r+1}}\Lambda(c_{r+1}) \xto{\sim} \a\vnp{u_{r+2}} \Lambda(c_{r+2})$$
and $h_r$ is zero on other terms. So that the cone $\Cone(h_r)$ can be identified with four terms
$$\Cone(h_r) \simeq \Lambda(\xi_r) \opp \Lambda(\xi_{r+1}) \opp \a \Lambda(\xi_{r+1}) \opp \a\Lambda(\xi_{r+2})$$
If $\beta := \vnp{\xi_r}/\vnp{\xi_{r+1}} = t^{-4}q^{-4}$ then there is a degree zero isomorphism $i_r : \Cone(h_r)\xto{\sim}\beta\Cone(h_{r+1})$ which identifies all of the terms below pairwise,
\[
i_r : \left[\begin{array}{c}
\Lambda(\xi_r)\\
\Lambda(\xi_{r+1})\\
\a \Lambda(\xi_{r+1})\\
\a\Lambda(\xi_{r+2}) 
  \end{array}\right]
\xto{\sim}
\beta \left[\begin{array}{c}
\Lambda(\xi_{r+1})\\
\Lambda(\xi_{r+2})\\
\a \Lambda(\xi_{r+2})\\
\a\Lambda(\xi_{r+3}) 
  \end{array}\right]
\]
To recap, we have a sequence of maps upon which we take cones. First $f : \srt\to \xM\srt$, $g : \Cone(f) \to \xL\Cone(f)$, $h : \Cone(g) \to \a\xL\Cone(g)$ and $i : \Cone(h) \to \beta\xL\Cone(h)$. Finally, $i$ is a degree zero isomorphism, $\Cone(i)\cong 0$ in $H^0(\SSeq)$. Here is the diagram,
\begin{center}
\begin{tikzpicture}
\node (A) at (0,3) {$1$};
\node (B) at (3,3) {$\xM$};
\node (C) at (0,0) {$\xL$};
\node (D) at (3,0) {$\xL\!\xM$};
\draw[->] (A) to node {} (B);
\draw[->] (C) to node {} (D);
\draw[->] (A) to node [swap] {}  (C);
\draw[->] (B) to node {} (D);

\node (Ab) at (2,4) {$\xL$};
\node (Bb) at (5,4) {$\xL\!\xM$};
\node (Cb) at (2,1) {$\xL^2$};
\node (Db) at (5,1) {$\xL^2\!\xM$};
\draw[->] (Ab) to node {} (Bb);
\draw[->] (Cb) to node {} (Db);
\draw[->] (Ab) to node [swap] {}  (Cb);
\draw[->] (Bb) to node {} (Db);

\draw[->] (A) to node {} (Ab);
\draw[->] (B) to node {} (Bb);
\draw[->] (C) to node {} (Cb);
\draw[->] (D) to node {} (Db);

\node (A2) at (4,2) {$\xL$};
\node (B2) at (7,2) {$\xL^2\!\xM$};
\node (C2) at (4,-1) {$\xL^2$};
\node (D2) at (7,-1) {$\xL^2\!\xM$};
\draw[->] (A2) to node {} (B2);
\draw[->] (C2) to node {} (D2);
\draw[->] (A2) to node [swap] {}  (C2);
\draw[->] (B2) to node {} (D2);

\node (Ab2) at (6,3) {$\xL^2$};
\node (Bb2) at (9,3) {$\xL^2\!\xM$};
\node (Cb2) at (6,0) {$\xL^3$};
\node (Db2) at (9,0) {$\xL^3\!\xM$.};
\draw[->] (Ab2) to node {} (Bb2);
\draw[->] (Cb2) to node {} (Db2);
\draw[->] (Ab2) to node [swap] {}  (Cb2);
\draw[->] (Bb2) to node {} (Db2);

\draw[->] (A2) to node {} (Ab2);
\draw[->] (B2) to node {} (Bb2);
\draw[->] (C2) to node {} (Cb2);
\draw[->] (D2) to node {} (Db2);

\draw[->] (A) to node {} (A2);
\draw[->] (B) to node {} (B2);
\draw[->] (C) to node {} (C2);
\draw[->] (D) to node {} (D2);

\draw[->] (Ab) to node {} (Ab2);
\draw[->] (Bb) to node {} (Bb2);
\draw[->] (Cb) to node {} (Cb2);
\draw[->] (Db) to node {} (Db2);
\end{tikzpicture} 
\end{center}

  \end{example}

The diagram in Ex. \ref{exampletrefoilcomplex} is a kind of Koszul resolution. In \S\ref{recsec} we will show that for each knot $K$ there is a Koszul resolution of the sort pictured above which categorifies the Alexander A-polynomial $\theA_K$ from Thm. \ref{alexqholthm}.

\section{Holonomicity for $\fcfk(K)$}\label{recsec}

In this section, for each knot $K$ in $S^3$, we will construct a homological
$q$-holonomic relation in the sense of Def. \ref{homqholdef} for each sequence $S := (\fcfk(K))_{r\geq 0})$ 
The arguments here generalize
Ex. \ref{exampleunknotcomplex} and Ex. \ref{exampletrefoilcomplex}.

Recall Proposition \ref{propprop}, the decomposition is analogous to the observation that there is a bijection between the terms summed to produce the colored Alexander polynomials  from Def. \ref{coloredalexdef}. Allowing $r$ to vary shows that the ungraded chain complex remains constant up to changes in grading. (The grading transforms according to Prop. \ref{prop:grading}.) The maps $\ell_r$ introduced below make this idea more precise.

\begin{corollary}\label{ellrdef}
  There are isomorphisms of ungraded chain complexes
  $\ell_r : \fcfk(K) \to \ffcfk(K)$
  \end{corollary}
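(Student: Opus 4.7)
The plan is to observe that Corollary \ref{ellrdef} is an almost immediate consequence of Proposition \ref{propprop}: the content of that proposition is precisely that the underlying ungraded chain complex of $\fcfk(K)$ is independent of $r$, since only the gradings depend on $r$ (via affine linear functions).

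In more detail, Proposition \ref{propprop} asserts a canonical decomposition
$$\fcfk(K) \cong \left(\bigoplus_{i=1}^n \Lambda(a_{i,r})\right) \opp \theta^r \fcfk(U),$$
and I will read off from this exactly the same decomposition for $\ffcfk(K)$ (with $r$ replaced by $r+1$). Crucially, the number $n$ of head summands does not depend on $r$, nor does the isomorphism type of the tail, which is a shifted copy of $\fcfk(U) \cong \FF_2[u] \opp \Lambda(\xi)$. The differentials on both sides are determined by the same formulas $d(a_{i,r}) = 1_{i,r}$, $d(u) = 0$, $d(\xi) = 1$, each of which is independent of $r$ as an ungraded relation.

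Given this, I would define $\ell_r$ on generators by the tautological assignments
$$\ell_r(a_{i,r}) := a_{i,r+1}, \qquad \ell_r(1_{i,r}) := 1_{i,r+1}, \qquad \ell_r(u) := u, \qquad \ell_r(\xi) := \xi,$$
extended $\FF_2[u]$-linearly on the tail and $\FF_2$-linearly overall. That this commutes with the differential is immediate from the matching formulas above. Because each generator listed in the decomposition of $\fcfk(K)$ corresponds under $\ell_r$ to exactly one generator of $\ffcfk(K)$ and vice versa, $\ell_r$ is a bijection on generators, hence an isomorphism of ungraded chain complexes.

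The only thing to flag as a potential obstacle is that the map $\ell_r$ is \emph{not} a graded map: by Proposition \ref{prop:grading}, the $(t,q)$-degrees of corresponding generators differ by affine linear functions of $r$. This is unavoidable and is the reason the corollary is stated in the ungraded category. In the next sections this ungraded isomorphism will serve as the ``shift'' map encoding how the sequence $(\fcfk(K))_{r \geq 0}$ repeats itself up to controlled grading changes, mirroring the role played by the operator $L$ on the decategorified side in \S\ref{qholalexsec}.
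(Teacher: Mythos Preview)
Your proposal is correct and matches the paper's proof essentially verbatim: the paper defines $\ell_r$ as $\left(\sum_i r_i\right) + t$, where $r_i(a_{i,r}) := a_{i,r+1}$, $r_i(1_{i,r}) := 1_{i,r+1}$, and $t(u) := u$, $t(\xi) := \xi$, which are exactly your assignments. Your added remarks about why $\ell_r$ is a chain map and fails to be graded are accurate and slightly more explicit than the paper's version.
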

\begin{proof}
Set $\ell_r := \left(\sum_i r_i\right) + t$ in
\begin{center}
\begin{tikzpicture}[scale=10, node distance=1cm]
\node (A) {$\fcfk(K)$};
\node (B) [right=1.25cm of A] {$\left(\opp_i\Lambda(a^r_i)\right)\opp \theta^r\fcfk(U)$};
\node (C) [below=1cm of A] {$\ffcfk(K)$};
\node (D) [below=1cm of B] {$\left(\opp_i\Lambda(a^r_i)\right)\opp \theta^{r+1}\ffcfk(U)$};
\draw[->] (A) to node {$\sim$} (B);
\draw[->] (C) to node {$\sim$} (D);
\draw[->] (A) to node [swap] {$\ell_r$}  (C);
\draw[->] (B) to node {$(\sum_i r_i) + t$} (D);
\end{tikzpicture} 
\end{center}
The maps are determined by the following assignments, for  $r_i : \Lambda(a_{i,r}) \to \Lambda(a_{i,r+1})$ set $r_i(1_{i,r}) := 1_{i,r+1}$ and $r_i(a_{i,r}) := a_{i,r+1}$, and for $t : \theta^r\fcfk(U) \to \theta^{r+1}\ffcfk(U)$ set $t(u) := u$ and $t(\xi) := \xi$.
  \end{proof}

Let us now prove the reduced case (in analogy with Lemma \ref{knotlem}), the
head of the sequence in Prop. \ref{propprop} can be systematically reduced
with cones of the form $\Cone(\fD_i)$ ($\fD_i$ is introduced in the proof of Lemma \ref{reducedlemma} below). The graded Euler characteristic of these cones
agrees with the action of operators $D_X$ from \S\ref{Dopsec}. The main
result of this section, Thm. \ref{mainresultthm} is proven by reducing the
general case, the sequence containing the tail which is determined by Eq. \eqref{decompeqn}, to the
reduced case.

\begin{defn}\label{redseqdef}
A sequence $S := (S^r)_{r\geq 0} \in \SSeq$ will be called a {\em a reduced sequence} when each term
can be written as $S^r \cong \opp_{i\in I_r} \Lambda(a_{i,r})$ in $H^0(\SSeq)$, with $|I_r|<\infty$ and the maps identifying adjacent terms
\begin{equation}\label{lassigneq}
  \ell_r(a_{i,r}):= a_{i,r+1}\conj{ and } \ell_r(1_{i,r}):= 1_{i,r+1}
  \end{equation}
are (ungraded) isomorphisms $\ell_r : S^r \xto{\sim} S^{r+1}$ of chain complexes.
\end{defn}

\begin{prop}\label{equivprop}
For any such reduced sequence $S$, there are equivalence relations $\equiv$ on the index sets $I_r$. 
For $i,j\in I_r$,
  $$i \equiv j \conj{ when } \frac{\vnp{\ell_r(a_{i,r})}}{\vnp{a_{i,r}}} = \frac{\vnp{\ell_r(a_{j,r})}}{\vnp{a_{j,r}}}$$
\end{prop}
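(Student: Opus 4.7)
The plan is to check that the proposed relation is well-posed and then verify the three equivalence-relation axioms directly. First I would observe that in the decomposition $S^r \cong \bigoplus_{i\in I_r} \Lambda(a_{i,r})$, every generator $a_{i,r}$ sits in a pure $(t,q)$-bidegree $\vnp{a_{i,r}} \in t^{\ZZ} q^{\ZZ}$, and likewise for $1_{i,r}$. Since $\ell_r$ is an ungraded isomorphism of chain complexes sending $a_{i,r} \mapsto a_{i,r+1}$ (by the assignments \eqref{lassigneq}), the image $\ell_r(a_{i,r}) = a_{i,r+1}$ also has a pure bidegree $\vnp{\ell_r(a_{i,r})} = \vnp{a_{i,r+1}}$. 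The set of bidegrees $t^{\ZZ}q^{\ZZ}$ is an abelian group under multiplication, so the quotient $\vnp{\ell_r(a_{i,r})}/\vnp{a_{i,r}}$ is a well-defined element of that group.

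Once the ratios are recognized as elements of an abelian group, the three axioms for $\equiv$ are just restatements of reflexivity, symmetry, and transitivity of equality in $t^{\ZZ}q^{\ZZ}$. Reflexivity is the identity $\vnp{\ell_r(a_{i,r})}/\vnp{a_{i,r}} = \vnp{\ell_r(a_{i,r})}/\vnp{a_{i,r}}$; symmetry and transitivity are immediate from the corresponding properties of equality. The only conceptual step, and the mildest possible obstacle, is thus the well-posedness of the ratios.

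I would finish by recording the structural content that justifies introducing $\equiv$ in the first place. By Proposition \ref{propprop}, the bidegrees $\vnp{1_{i,r}}$ (hence $\vnp{a_{i,r}} = tq\,\vnp{1_{i,r}}$) are affine linear functions of $r$, so writing $\vnp{a_{i,r}} = t^{m_i r + b_i} q^{n_i r + c_i}$ the ratio simplifies to
\[
\frac{\vnp{\ell_r(a_{i,r})}}{\vnp{a_{i,r}}} \;=\; \frac{\vnp{a_{i,r+1}}}{\vnp{a_{i,r}}} \;=\; t^{m_i} q^{n_i},
\]
which is independent of $r$. Consequently the equivalence classes of $\equiv$ on $I_r$ record precisely the common ``slopes'' $(m_i, n_i)$ of the affine linear degree functions attached to the generators of the head of the decomposition \eqref{decompeqn}, and these are the data that organize the iterated cone construction in \S\ref{recsec}.
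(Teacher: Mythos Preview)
Your proof is correct, and in fact the paper gives no proof at all for this proposition: it is stated and then immediately used, the content being the routine observation that a relation defined by equality of a well-defined function $i\mapsto \vnp{\ell_r(a_{i,r})}/\vnp{a_{i,r}}$ into an abelian group is automatically an equivalence relation. Your first two paragraphs spell this out carefully and match what the paper leaves implicit.

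One caveat about your final paragraph: Proposition~\ref{equivprop} is stated for an \emph{arbitrary} reduced sequence in the sense of Definition~\ref{redseqdef}, not just for the sequences $\fcfk(K)$ governed by Proposition~\ref{propprop}. For a general reduced sequence there is no reason that $\vnp{a_{i,r}}$ is affine linear in $r$, nor that the ratio $\vnp{a_{i,r+1}}/\vnp{a_{i,r}}$ is independent of $r$. Indeed, the paper introduces a separate notion of \emph{regular} reduced sequence precisely to impose compatibility of $\equiv$ across different $r$, and the $r$-independence of the ratios is only established later, in the proof of Theorem~\ref{mainresultthm}, using the affine linearity from Proposition~\ref{propprop}. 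So your concluding remarks are valid motivation for the special sequences that ultimately matter, but they are not part of, and should not be presented as part of, the proof of Proposition~\ref{equivprop} itself.
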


So for a reduced sequence $S$, we set $[I_r]:=I_r/\!\!\,\equiv$, $\vnp{S_r}:=\vnp{[I_r]}$ and $[\Lambda(a_{i,r})]:=\opp_{j\equiv i} \Lambda(a_{j,r})$ for the sum over corresponding congruence classes. It follows that each term $S^r$ in the sequence $S$ admits a factorization
  $$S^r \cong \sum_{i\in [I_r]} [\Lambda(a_{i,r})]$$
into congruence classes of summands. The maps $\ell_r$ respect this decomposition in the sense that $\ell_r([\Lambda(a_{i,r})]) = [\Lambda(\ell_r(a_{i,r}))]$.
There are maps $\ell_r : I_r \to I_{r+1}$ given by $\ell_r(i) := i$. 
\begin{defn}
A reduced sequence is {\em regular} when $i\equiv j$ if and only if $\ell_r(i) \equiv \ell_r(j)$. 
\end{defn}

\begin{lemma}\label{reducedlemma}
A regular reduced sequence $S$ is homologically $q$-holonomic in the sense of Def. \ref{homqholdef}.
\end{lemma}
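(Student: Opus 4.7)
The plan is to induct on $N := \vnp{[I_r]}$, the number of equivalence classes, which is independent of $r$ by regularity and finite by Def.~\ref{redseqdef}. The base case $N = 0$ forces $S \cong 0$, which is holonomic by Def.~\ref{homqholdef}.

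For the inductive step, I would pick any class $[i_0] \in [I_r]$ and set $X := \vnp{a_{i_0,r+1}}/\vnp{a_{i_0,r}}$; this ratio is independent of $r$ because each grading $\vnp{a_{i,r}} = t^{mr+b}q^{nr+c}$ is affine linear in $r$ by Prop.~\ref{prop:grading}. The central construction is a chain map $\phi : S \to X^{-1}\xL S$ in $\SSeq$ whose component $\phi_r : S^r \to X^{-1} S^{r+1}$ acts as $\ell_r$ on the subcomplex $[\Lambda(a_{i_0,r})]$ and vanishes on every other $\Lambda$-summand of $S^r$. Degree preservation on $[\Lambda(a_{i_0,r})]$ is exactly the content of Prop.~\ref{equivprop} for the class $[i_0]$, and compatibility with the differential is automatic since each $\Lambda(a_{i,r})$ is $d$-invariant. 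Because $X^{-1}\xL S \in \inp{\xA^+ \cdot S}$, the distinguished triangle
\[
S^{(1)} \to S \xto{\phi} X^{-1}\xL S \to S^{(1)}[1], \quad S^{(1)} := \textnormal{fib}(\phi),
\]
provides the first triangle of the assembly, with $Y_0 := X^{-1}\xL S$ nonzero.

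The hard part will be verifying that $S^{(1)}$ is again a regular reduced sequence, this time with exactly $N - 1$ equivalence classes, so that the inductive hypothesis applies. Unfolding the fiber at level $r$ gives
\[
S^{(1)}_r \;\simeq\; \bigoplus_{[j]\neq[i_0]} \bigl([\Lambda(a_{j,r})] \,\oplus\, X^{-1}[\Lambda(a_{j,r+1})][-1]\bigr),
\]
a finite direct sum of length-one complexes. For each class $[j] \neq [i_0]$ the two sub-summands coming from source and target have the common shift ratio $X_{[j]}$: when the ratio is formed, the factors $t^{\pm 1}$ from the homological shift and $X^{\pm 1}$ from the degree shift cancel. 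Consequently the two copies fall into a single equivalence class of $I^{(1)}_r$, yielding exactly $N - 1$ classes, and regularity for $S^{(1)}$ is inherited from that of $S$.

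Finally, the inductive hypothesis produces a holonomic assembly of $S^{(1)}$, and because $S^{(1)} \in \inp{\xA^+ \cdot S}$ the inclusion $\inp{\xA^+ \cdot S^{(1)}} \subseteq \inp{\xA^+ \cdot S}$ is immediate, so prepending the initial triangle above delivers the required assembly of $S$. No essential difficulty beyond the grading bookkeeping in the cone is anticipated, all shift ratios being $r$-independent by the affine linearity from Prop.~\ref{prop:grading}.
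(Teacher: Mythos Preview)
Your argument follows the same inductive strategy as the paper's: induct on the number $N$ of equivalence classes, and at each step build a map to a shift of $\xL S$ that agrees with $\ell_r$ on one chosen class and vanishes on the others, then pass to the (co)fiber to obtain a new regular reduced sequence with $N-1$ classes. The paper phrases things in terms of the cone $\Cone(\fD_i)$ rather than your fiber $S^{(1)}$; these differ only by a homological shift, and the verification that the resulting sequence is again regular reduced with one fewer class (your fold-map/cancellation argument) mirrors the paper's steps (i)--(iii) almost verbatim. Your final paragraph closing the induction, noting $\inp{\xA^+\cdot S^{(1)}}\subseteq\inp{\xA^+\cdot S}$, is a detail the paper leaves implicit.

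One small issue: you justify that the ratio $X=\vnp{a_{i_0,r+1}}/\vnp{a_{i_0,r}}$ is independent of $r$ by citing Prop.~\ref{prop:grading}, but that proposition is a statement about the specific gradings arising in $\scfk(K)$, not about arbitrary regular reduced sequences as in the hypothesis of the lemma. The paper's proof also treats its $\alpha_i$ as $r$-independent without comment, so this is not a divergence in rigor between your proof and the paper's---both tacitly use that the shift ratios are constant in $r$, which is not literally part of the definition of ``regular'' but does hold in the only application (Thm.~\ref{mainresultthm}, where affine linearity is verified directly). Just be aware that the citation is formally out of scope for the lemma as stated.
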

\begin{proof}
  By induction on the number of congruence classes $\vnp{S}$. If $\vnp{S} = 0$ then the sequence $S=\ul{0}$ is the constant sequence. Suppose that $\vnp{S}=n$ and pick $i\in [I_0]$ then the maps $\ell_r$ restrict to degree zero isomorphisms $f_{i,r} := \ell_r|_{[\Lambda(a_{i,r})]}$
  $$f_{i,r} : [\Lambda(a_{i,r})]\to \a_i [\Lambda(\ell_r(a_{i,r}))] \conj{ where } \a_i := \frac{\vnp{\ell_r(a_{i,r})}}{\vnp{a_{i,r}}}$$
So there are degree zero chain maps $\fD_{i,r} : S^r \to \a_i\xL S^{r}$
\[
\fD_{i,r}(x) := \left\{ \begin{array}{ll} f_{i,r}(x) & x\in [\Lambda(a_{i,r})]\\
  0 & x\not\in [\Lambda(a_{i,r})]
\end{array}\right.
\]
There is a map $\fD_i := (\fD_{i,r})_{r\geq 0}$ formed by combining the maps $\fD_{i,r}$. 
$$\fD_i := \oplus_{j\equiv i} \fD_{j,r}$$
The cone $S_{new} := \Cone(\fD_{i})$ is homotopy equivalent to a sequence which excludes the $i$th component of $S$ and doubles the others
\begin{equation}\label{fancypizzaeq}
  \Cone(\fD_{i,r}) \cong S^r_{\backslash i} \opp \a_i S^r_{\backslash i} \conj{ where we set } S^r_{\backslash i} := \opp_{j\in [I_r]\backslash \{ i\} } [\Lambda(a_{j,r})].
  \end{equation}

The proof is concluded by showing that the sequence $\Cone(\fD_i)$ is a regular reduced sequence with $\vnp{\Cone(\fD_{i})} = \vnp{S} - 1$. This will occur in three steps. First we prove that the sequence is reduced, second that the sequence is regular and third we show that the number of congruence classes has decreased by one. The point is that the set of congruence classes of $S^r_{\backslash i}$ and $\alpha_i S^r_{\backslash i}$ are the same because $\alpha_i$ is just a degree shift.
 
\begin{enumerate}[label=(\roman*),labelwidth=!, labelindent=0pt, wide]
\item {\it The sequence $\Cone(\fD_i)$ is reduced:} Eqn. \eqref{fancypizzaeq} shows that each term 
$$\Cone(\fD_i)_r \cong \bigoplus_{i\not\equiv j} \left(\Lambda(a_{j,i}) \opp \a_i\Lambda(a_{j,i}) \right) = \bigoplus_{j\in J_r} \Lambda(b_{j,i})$$
where $J_r := A_r\sqcup B_r$ where $A_r := I_r\backslash [i]$ and  $B_r:=I_r\backslash [i]$  and an element $j\in I\backslash [i]$ corresponds to either $\Lambda(b_{j,r}):=\Lambda(a_{j,r})$ when $j\in A_r$ and $\Lambda(b_{j,r}):=\a_i\Lambda(a_{j,r})$ when $j\in B_r$. 

The maps $m_r : \opp_{j\in J_r} \Lambda(b_{i,r})\to \opp_{j\in J_{r+1}} \Lambda(b_{i,r+1})$ determined by the assignments in Eqn. \eqref{lassigneq} above determine isomorphisms between adjacent terms in the new sequence because regularity of the sequence $S$  shows $\ell_r$ respects the equivalence relation $\equiv$ and induces isomorphisms between components. The maps $m_r = \ell_r|_{S^r/[\Lambda(a_{i,r})]} + \a_i\ell_r|_{S^r/[\Lambda(a_{i,r})]}$ are a block diagonal matrix with components given by the restriction of $\ell_r$ to the complement of the $i$th component and an $\a_i$-shifted copy of this restriction. 

\item {\it The sequence $\Cone(\fD_i)$ is regular:} Since $(\a_i \vnp{\ell_r(a_{j,r})}/(\a_i \vnp{a_{j,r}} )= \vnp{\ell_r(a_{j,r})}/\vnp{a_{j,r}}$ the fold map $\varphi : J_r \to I_r\backslash [i]$ is 2-to-1 and induces a bijection $J_r/\!\!\,\equiv\,\, \xto{\sim} (I_r\backslash [i])/\!\!\,\equiv$. Since $S^r$ is regular, $j,k \in I_r\backslash [i]$ implies $j \equiv k \Leftrightarrow \ell_r(j)\equiv \ell_r(k)$. Since $m_r = \ell_r$ on equivalence classes under the identification $\varphi$, $\Cone(\fD_r)$ is regular as well.

\item {\it The number of congruence classes has decreased:} Observe that $\vnp{\Cone(\fD_r)}= \vnp{J_r/\!\!\,\equiv} =_{\varphi} |(I_r/\!\equiv)\backslash [i]|= \vnp{I_r/\!\!\,\equiv}-1$.

\end{enumerate}
\end{proof}

\begin{thm}\label{mainresultthm} 
If $K$ is a knot then the sequence $S^r(K) := (\fcfk(K))_{r\geq 0} \in \SSeq$ is homologically $q$-holonomic.
\end{thm}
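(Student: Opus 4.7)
The plan is to reduce the theorem to the reduced case handled by Lemma \ref{reducedlemma} by killing the infinite $\FF_2[u]$-tail of $\fcfk(K)$ with a single mapping cone, generalising the strategy used for the unknot and trefoil in Examples \ref{exampleunknotcomplex} and \ref{exampletrefoilcomplex}.

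By Proposition \ref{propprop}, each term of the sequence splits as
$$\fcfk(K) \cong H_r \opp \theta^r \fcfk(U), \qquad H_r := \opp_{i=1}^{n} \Lambda(a_{i,r}),$$
a finite head $H_r$ plus a shifted unknot tail $\theta^r(\FF_2[u]\otimes \Lambda(\xi))$. First I would define a map of sequences $f=(f_r): S^r(K) \to \xM\, S^r(K)$ whose component $f_r$ vanishes on $H_r$ and acts on the tail by ``division by $u$'', i.e.\ $u^a\xi^b \mapsto u^{a-1}\xi^b$ (with $f_r(\xi^b)=0$). Since $\vnp{u} = t^{2(r-1)}q^r$ agrees with the degree shift introduced by $\xM$ at index $r$, the component $f_r$ is a degree-zero filtered chain map. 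Following the cone computation of Example \ref{exampletrefoilcomplex}, one obtains
$$\Cone(f_r) \simeq H_r \opp \vnp{u}\, H_r \opp \theta^r \Lambda(\xi),$$
which is finitely generated at each level and fits the form of a reduced sequence in the sense of Definition \ref{redseqdef}.

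Next I would verify that $\Cone(f)$ is a regular reduced sequence. The ungraded isomorphisms $\ell_r : \Cone(f_r) \xrightarrow{\sim} \Cone(f_{r+1})$ required in Definition \ref{redseqdef} are obtained by pushing the isomorphisms of Corollary \ref{ellrdef} through the mapping cone. Regularity follows from affine linearity: by Propositions \ref{prop:grading} and \ref{propprop}, each bidegree $\vnp{a_{i,r}}$ (and each new bidegree appearing in $\vnp{u}H_r$ or $\theta^r\Lambda(\xi)$) is of the form $t^{m_i r + b_i}q^{n_i r + c_i}$, so the ratio $\vnp{\ell_r(a_{i,r})}/\vnp{a_{i,r}} = t^{m_i}q^{n_i}$ is independent of $r$. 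Since $\ell_r$ fixes the index $i$, the equivalence relation of Proposition \ref{equivprop} is preserved, and the sequence $\Cone(f)$ is regular.

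Applying Lemma \ref{reducedlemma} then assembles $\Cone(f)$ from objects in $\inp{\xA^+\cdot \Cone(f)}\subseteq \inp{\xA^+\cdot S^r(K)}$. Prepending the distinguished triangle
$$S^r(K) \xrightarrow{f} \xM\, S^r(K) \to \Cone(f) \to S^r(K)[1]$$
provides the additional step required by Definition \ref{homqholdef} and produces a complete assembly of $S^r(K)$ from its Weyl orbit. The main obstacle I anticipate is ensuring that the cone formula and the maps $\ell_r$ are compatible with any off-diagonal interaction between head and tail in the actual differential of $\fcfk(K)$; this should be manageable because the decomposition of Proposition \ref{propprop} is stated in the homotopy category $H^0(F^*\Ch_\ZZ)$, so one may choose representatives in which the differential is block diagonal before applying the construction, so that $f_r$ is literally a chain map rather than merely a homotopy class.
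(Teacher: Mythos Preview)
Your proposal is correct and follows essentially the same route as the paper: define the ``division by $u$'' map $f_r$ vanishing on the head, identify $\Cone(f_r)\simeq H_r\opp\vnp{u}H_r\opp\Lambda(\xi)$, check that $\Cone(f)$ is a regular reduced sequence using the affine linearity of the bidegrees from Proposition~\ref{propprop}, and then invoke Lemma~\ref{reducedlemma}. Your explicit mention of prepending the distinguished triangle $S^r(K)\to\xM S^r(K)\to\Cone(f)$ and of the inclusion $\inp{\xA^+\cdot\Cone(f)}\subseteq\inp{\xA^+\cdot S^r(K)}$ is a point the paper leaves implicit, and your remark about choosing block-diagonal representatives in $H^0(F^*\Ch_\ZZ)$ is the right way to handle the potential head--tail interaction.
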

\begin{proof}
Recall from Prop. \ref{propprop} that 
$$\fcfk(K) \cong \Hd \opp \Tl$$
where $\Hd := \left(\Lambda(a_{1,r})\opp\Lambda(a_{2,r})\opp\cdots\opp\Lambda(a_{n,r})\right)$ and $\Tl := \theta^r\fcfk(U)$.

There is a map $f_r : \fcfk(K)\to \vnp{u}\fcfk(K)$ which vanishes on $\Hd$,
$f|_{\Hd}=0$, and on the tail is defined to be $f(u^n\xi^k) := u^{n-1}\xi^k$. As in Ex. \ref{exampletrefoilcomplex}, 
$$\Cone(f_r) \simeq \Hd \opp \vnp{u} \Hd \opp \Lambda(\xi).$$
 The maps $f_r$ combine to form a map $f : \fcfk(K) \to\xM\fcfk(K)$. We claim that the sequence of cones $\Cone(f) \in \SSeq$ is regular and reduced.

Set $I_r := \{1,\ldots,n,\vnp{u}1, \ldots, \vnp{u}n, \xi\}$. It follows from Prop. \ref{propprop} that
$$\Cone(f_r)  \cong \Hd\opp \vnp{u}\Hd\opp \Lambda(\xi) = \sum_{i\in I_r} \Lambda(c_{i,r})$$
where 
$$\Lambda(c_{i,r}) :=\left\{\begin{array}{ll}
\Lambda(a_{i,r}) & i\in\{1,\ldots,n\}\\
\vnp{u}\Lambda(a_{i,r}) & i \in\{ \vnp{u}1,\ldots,\vnp{u}n\}\\
\Lambda(\xi) & i\in\{\xi\}
\end{array}\right.$$
this is the same complex for each $r\geq 1$, so the maps $\ell_r : \Cone(f_r) \to \Cone(f_{r+1})$ given by the assignments Eqn. \eqref{lassigneq} are isomorphisms. It follows that the sequence $\Cone(f)$ is reduced. 

The sequence $\Cone(f)$ is regular because, for any $i\in I_r$, the bidegrees of $\vnp{1_{i,r}}$ are fixed affine linear functions of $r$, which implies that the quotients $\vnp{\ell_r(1_{i,r})}/\vnp{1_{i,r}}$ are constant functions of $r$.

The result now follows from Lemma \ref{reducedlemma}.
\end{proof}

\begin{remark}
The theorem shows that holonomicity corresponds to a Koszul-type complex which is determined by the immersed curve representing the module $\widehat{CFD}(K)$.
  \end{remark}

\section{Notation}\label{app2sec}

\noindent
\begin{multicols}{2}
\begin{list}{}{
  \renewcommand{\makelabel}[1]{#1\hfil}
}
\item[$\equiv$] Prop. \ref{equivprop}
\item[$\circ$] Notation \ref{circlenotation}
\item[$\vnp{x}_t$] homological degree 
\item[$\vnp{x}_q$] graded degree 
\item[$\vnp{x}_{t,q}$] $\vnp{x}_{t,q}=(\vnp{x}_t, \vnp{x}_q)$
\item[$\vnp{x}$] multiplicative bidegree, if $\vnp{x}_{t,q} = (n,m)$ then $\vnp{x} = t^nq^m$ 
\item[$1_{i,r}$] Prop. \ref{propprop}
\item[$3_1$] right-handed trefoil
\item[$\theta$] Prop. \ref{propprop}
\item[$\Delta_n$] Ex. \ref{example:dilate}
\item[$\alex_K(q)$] Alexander's balanced Laurent polynomial 
\item[$\Delta^1_K(q)$] positive form of Alexander polynomial
\item[$\unsralex_K(q)$] unreduced $r$-colored polynomial, Def. \ref{coloredalexdef}
\item[$\unsralex_U(q)$] $r$-colored unknot, Eqn. \eqref{unknoteq}
\item[$\sralex_K(q)$] reduced $r$-colored polynomial, Eqn. \eqref{redeq}
\item[$a_{i,r}$] Prop. \ref{propprop}
\item[$\weyl$] Weyl algebra \S\ref{weylalgsec}
\item[$\xA^+$] Eqn. \eqref{texeq}
\item[$\tA_K$] $A$-polynomial, Lem. \ref{knotlem}
\item[$\bar{\tA}_K$] unreduced $A$-polynomial, Thm. \ref{alexqholthm}
\item[$\eA_\ZZ$] category of graded object, \S\ref{app1sec}
\item[$\bgen$] bottom generator,  Not. \ref{notation:vertsimp}
\item[$\dgen$] distinguished generator, Not. \ref{notation:vertsimp}
\item[$\bar{C}$] Ex. \ref{constseqex}
\item[$\hC(K)$,$C^-(K)$] Eqn. \eqref{eq:hatc}
\item[$CF(\ga,\eta)$] Thm. \ref{thm:ICpairing}
\item[$\Ch(\eA)$] category of chain complexes in $\eA$, Def. \ref{chdef}
\item[$\Do_X$] difference operator, \S\ref{Dopsec}
\item[$\fD$] Lem. \ref{reducedlemma}
\item[$F^*\eA$] category of filtered objects, \S\ref{app1sec}
\item[$\ga_K, \og_K$]  \S\ref{ICs}
\item[$\hog_K$] Cor. \ref{cor:structurecor}
\item[$g(K)$] Seifert genus Rem. \ref{rem:kinv}
\item[$\og_0$] Rmk. \ref{rem:strprop} (1) 
\item[$H^0$] homotopy category, \S\ref{app1sec}
\item[$\Hd$] Def. \ref{onecoloreddef}
\item[$\hfk(K)$] $H_*(\cfk(K))$, knot Floer homology (balanced grading)
\item[$K_{r,s}$] Def. \ref{def:cable}
\item[$\ell_r$] Cor. \ref{ellrdef}
\item[$\xL$] Prop. \ref{weylprop}
\item[$\xM$] Prop. \ref{weylprop}
\item[$\nu K$] regular neighborhood of knot
\item[$q^m$] monomial or graded shift functor $q^m = \cdot\{-m\}$ 
\item[$\Seq$] Def. \ref{seq}
\item[$\Seq(T)$] Def. \ref{seq}
\item[$\SSeq$] $\SSeq = \SSeq(F^*\Ch_\ZZ)$, Eqn. \eqref{sseqdef}
\item[$\SSeq(\eA)$] sequences in the dg category $\eA$
\item[$\tau(K)$] Tau invariant Rem. \ref{rem:kinv}
\item[$t^n$] monomial or homological shift functor $t^n = \cdot[-n]$
\item[$t^nq^m$] monomial or shift functor
\item[$T_K, \bar{T}_K$] \S\ref{ICs}
\item[$\Tl$] Def. \ref{onecoloreddef}
\item[$\cfd$] Eqn. \eqref{eq:pairing}
\item[$\gscfk(K)$] $S^r$-colored knot Floer associated graded chain complex
\item[$\scfk(K)$] $S^r$-colored knot Floer filtered graded chain complex
\item[$\shfk(K)$] $H_*(\scfk(K))$
\item[$U$] the unknot
\item[$\ZZg$] $\ZZg := \{0,1,2,\ldots\}\subset \ZZ$
\end{list}
\end{multicols}

\bibliography{stablehfk}{}
\bibliographystyle{amsalpha}

\end{document}